\newtheorem{theorem}{Theorem}[section]
\newtheorem{prop}[theorem]{Proposition}
\newtheorem{conj}[theorem]{Conjecture}
\newtheorem*{conjintro2}{Conjecture}
\theoremstyle{definition}
\newtheorem{definition}[theorem]{Definition}
\newtheorem{example}[theorem]{Example}
\theoremstyle{remark}
\numberwithin{equation}{section}
\newcommand{\N}{\mathbb N}
\newcommand{\Z}{\mathbb Z}
\newcommand{\Q}{\mathbb Q}
\newcommand{\C}{\mathbb C}
\def\sgn{{\rm sgn}}
\def\Fr{{\rm Fr}}
\def\rec{{\rm rec}}
\def\cusp{{\rm cusp}}
\def\Hom{{\rm Hom}}
\def\proj{{\bf p}}
\def\Sc{{\bf Sc}}
\def\temp{{\rm temp}}
\def\Ad{{\rm Ad}}
\def\diag{{\rm diag}}
\def\St{{\rm St}}
\def\st{{\rm st}}
\def\SO{{\rm SO}}
\def\O{{\rm O}}
\def\Sp{{\rm Sp}}
\def\GL{{\rm GL}}
\def\SL{{\rm SL}}
\def\Irr{{\rm Irr}}
\def\Sil{\textrm{\footnotesize\cursive\slshape Si}}
\def\Scl{\textrm{\footnotesize\cursive\slshape Sc}}
\def\wj{\textrm{\footnotesize\cursive\slshape j}}
\def\wi{\textrm{\footnotesize\cursive\slshape i}}
\def\restriction#1#2{\mathchoice
              {\setbox1\hbox{${\displaystyle #1}_{\scriptstyle #2}$}
              \restrictionaux{#1}{#2}}
              {\setbox1\hbox{${\textstyle #1}_{\scriptstyle #2}$}
              \restrictionaux{#1}{#2}}
              {\setbox1\hbox{${\scriptstyle #1}_{\scriptscriptstyle #2}$}
              \restrictionaux{#1}{#2}}
              {\setbox1\hbox{${\scriptscriptstyle #1}_{\scriptscriptstyle #2}$}
              \restrictionaux{#1}{#2}}}
\def\restrictionaux#1#2{{#1\,\smash{\vrule height .8\ht1 depth .85\dp1}}_{\,#2}} 
\newcommand{\yslant}{0.5}
\newcommand{\xslant}{-0.6}
\tikzset{snake arrow/.style=
{->,
decorate,
decoration={snake,amplitude=7pt,segment length=20pt,post length=5pt}},
}
\definecolor{gris1}{gray}{0.9}
\definecolor{gris2}{gray}{0.75}
\definecolor{gris3}{gray}{0.6}
\begin{document}

\title[Proof of the ABPS conjecture for split classical groups]{Proof of the Aubert-Baum-Plymen-Solleveld conjecture for split classical groups}

\author[A. Moussaoui]{Ahmed Moussaoui}

\address{Pacific Institute for the Mathematical Sciences and Department of Mathematics and Statistics, University of Calgary, 
2500 University Drive NW, Calgary, Alberta, Canada, T2N 1N4}

\email{ahmed.moussaoui@ucalgary.ca, ahmedmoussaouimath@gmail.com}
\thanks{The author would like to thank the anonymous referee, Anne-Marie Aubert, Mounir Hajli, Bin Xu and especially Clifton Cunningham for several comments and corrections on a preliminary version.\\ This article was written as a PIMS Post Doctoral Fellow.}

\subjclass[2010]{Primary 22E50, 11R39, 20C33, 11F85} 

\date{}

\begin{abstract}
In this paper we prove the Aubert-Baum-Plymen-Solleveld conjecture for the split classical groups and establish the connection with the Langlands correspondence. To do this, we review the notion of cuspidality for enhanced Langlands parameters and also review the notion of cuspidal support for enhanced Langlands parameters for split classical groups, both introduced by the author in earlier work.
\end{abstract}

\maketitle
\setcounter{tocdepth}{2}     
\setcounter{secnumdepth}{2}  
\tableofcontents

\section*{Introduction}
Let $G$ be a connected reductive $p$-adic group and $\Irr(G)$ be the set of (classes of) smooth irreducible complex representations of $G$. On the one hand, the Bernstein decomposition gives a way to study $\Irr(G)$ in terms of parabolic induction. On the other hand, the local Langlands correspondence predicts a decomposition of $\Irr(G)$ into finite subsets. It is natural to ask what is the relation between these two decompositions? We are particularly interested in the question of what are the Langlands parameters for supercuspidal representations (see Definition~\ref{defcuspS}) and how to define cuspidal support for (enhanced) Langlands parameters (see Theorem~\ref{theoremesupportcuspidal}).  

In this paper we prove the Aubert-Baum-Plymen-Solleveld conjecture for the split classical groups. This conjecture was also proved by Solleveld in \cite{Solleveld:2012aa} using different arguments, which are discussed in Section~\ref{sec:ABPSconj}. However, in that proof there is no link with the Langlands correspondence. The proof presented here makes clear the relation between the ABPS conjecture and the Langlands correspondence. Roughly speaking, this is done by studying the link between the Langlands correspondence and parabolic induction. 
This requires a quick overview of \cite{Moussaoui:2015}. In fact, the main motivation for \cite{Moussaoui:2015} was the study of the Aubert-Baum-Plymen-Solleveld conjecture. In particular, we note that our constructions fit naturally with the work of Haines \cite{Haines:2013aa} on the stable Bernstein centre, especially Conjecture~\ref{conjindLLC}, regarding the compatibility of parabolic induction and the local Langlands correspondence.

In order to state the main result of this paper, we briefly review the Aubert-Baum-Plymen-Solleveld conjecture, beginning with what is commonly referred to as an extended quotient.

Let $T$ be a complex affine variety and $\Gamma$ be a finite group acting on $T$. For all $t \in T$, let $\Gamma_t =\{ \gamma \in \Gamma \mid \gamma \cdot t = t \}$ be the stabilizer of $t$ in $\Gamma$. 
The group $\Gamma$ acts on $$Y=\{(t,\rho) \mid t \in T, \rho \in \Irr(\Gamma_t)\}$$ by $$\alpha \cdot (t,\rho) =(\alpha \cdot t, \alpha^* \rho), \,\, \alpha \in \Gamma, \, (t,\rho) \in Y,$$ where $\alpha^* \rho \in \Irr(\Gamma_{\alpha \cdot t})$ is defined by, $(\alpha^* \rho)(\gamma)=\rho(\alpha \gamma \alpha^{-1})$, for all $\gamma \in \Gamma_{\alpha \cdot t}$.

The \emph{spectral extended quotient of $T$ by $\Gamma$} is the quotient $Y/\Gamma$ and it is denoted by $T \sslash \widehat{\Gamma}$.

Note that the projection map on the first coordinate $Y \longrightarrow T$ is $\Gamma$-equivariant and this defines a projection map $T \sslash \widehat{\Gamma} \longrightarrow T /\Gamma$.

We now recall the Bernstein decomposition; see \cite[2.10,2.13]{Bernstein:1984aa} and \cite[VI.7.1,VI.7.2,VI.10.3]{Renard:2010aa} for more detail.
Let $G$ be a connected reductive group defined and split over a $p$-adic field. 
We denote by $i_P^G$ and $r_P^G$ the parabolic induction and Jacquet functors, respectively. 
Let $\pi$ be an irreducible smooth representation of $G$. 
Let $P$ be a parabolic subgroup of $G$ with Levi factor $M$ such that $r_P^G(\pi) \neq 0$ and minimal for this property. 
Let $\sigma$ be an irreducible subquotient of $r_P^G(\pi)$. 
Then $\sigma$ is an irreducible supercuspidal representation of $M$. 
Moreover if $(M',\sigma')$ is a pair which arises in the same way for another parabolic subgroup $P'$, then there exists $g \in G$ such that $M'={}^g M$ and $\sigma \simeq {}^g \sigma'$. 
The $G$-conjugacy class of the pair $(M,\sigma)$ is called the \emph{cuspidal support} of $\pi$. 
There are two equivalence relations on the set of pairs $(M,\sigma)$ where $M$ is a Levi subgroup of $G$ and $\sigma$ is an irreducible supercuspidal representation of $M$:
conjugation by $G$ on these pairs and conjugation by $G$ up to an unramified character. More precisely, if $(M_1,\sigma_1)$ and $(M_2,\sigma_2)$ are two such pairs, then $(M_1,\sigma_1)$ is said to be associated (resp. inertially equivalent) to $(M_2,\sigma_2)$ if there exists $g \in G$ such that ${}^gM_1=M_2$ and $\sigma_1^g \simeq \sigma_2$ (resp. there exist $g \in G$ and an unramified character $\chi_2 \in \mathcal{X}(M_2)$ such that ${}^gM_1=M_2$ and $\sigma_1^g \simeq \sigma_2 \chi_2$). 
Let $\Omega(G)$ and $\mathcal{B}(G)$ be the set of associated equivalence classes (resp. inertial equivalence classes) of pairs $(M,\sigma)$ where $M$ is a Levi subgroup of $G$ and $\sigma$ is an irreducible supercuspidal representation of $M$. 
Because the group $\mathcal{X}(M)$ of unramified characters of $M$ has a torus structure, we can associate the following to each $\mathfrak{s}=[M,\sigma] \in \mathcal{B}(G)$:
\begin{itemize}
\item a torus $T_{\mathfrak{s}}=\{\sigma \otimes \chi, \chi \in \mathcal{X}(M)\}$;
\item a finite group $W_{\mathfrak{s}}=\left\{w \in N_G(M)/M \mid \exists \chi \in \mathcal{X}(M), \sigma^{w} \simeq \sigma \otimes \chi \right\}$;
\item an action of $W_{\mathfrak{s}}$ on $T_{\mathfrak{s}}$.
\end{itemize}
Notice that $T_{\mathfrak{s}}$ becomes a torus after the choice of a base point $(M,\sigma)$. The fiber over $\mathfrak{s} \in \mathcal{B}(G)$ under the projection map $\Omega(G) \twoheadrightarrow \mathcal{B}(G)$ is identified with the quotient $T_{\mathfrak{s}}/W_{\mathfrak{s}}$. The Bernstein decomposition of the set of irreducible representations of $G$ is a partition of $\Irr(G)$ indexed by $\mathcal{B}(G)$: $$\Irr(G)=\bigsqcup_{\mathfrak{s} \in \mathcal{B}(G)} \Irr(G)_{\mathfrak{s}}.$$
Moreover, the cuspidal support map restricts on each piece to a map $\Sc: \Irr(G)_{\mathfrak{s}} \longrightarrow T_{\mathfrak{s}} / W_{\mathfrak{s}}$.
The benefit of this extended quotient is the following conjecture, which predicts that we can recover $\Irr(G)_{\mathfrak{s}}$ from the data associated to $\mathfrak{s}$ described above.
\begin{conjintro2}[Aubert-Baum-Plymen-Solleveld]
For each $\mathfrak{s} \in \mathcal{B}(G)$, there exists a bijection 
\[
\mu_{\mathfrak{s}} :  \Irr(G)_{\mathfrak{s}} \longrightarrow T_{\mathfrak{s}} \sslash \widehat{W_{\mathfrak{s}}}.
\]
  In general the following diagram is not commutative \\
\begin{center}
\begin{tikzcd}
  \Irr(G)_{\mathfrak{s}} \arrow[swap]{rd}{\Sc} \arrow[leftrightarrow]{rr}{\mu_{\mathfrak{s}}} & & T_{\mathfrak{s}} \sslash \widehat{W_{\mathfrak{s}}} \arrow{ld}{\proj_{\mathfrak{s}}} \\
   & T_{\mathfrak{s}} / W_{\mathfrak{s}} &
  \end{tikzcd}
\end{center}
but by precomposing the projection on the right with certain cocharacters of $T_{\mathfrak{s}}$, called \emph{correcting cocharacters}, then this diagram is commutative.
\end{conjintro2}

In \cite{Aubert:2014ab}, in the case where the Levi subgroup defining the inertial pair is a maximal torus of a split group, the authors show that the correcting cocharacter associated to $[t,\rho] \in T_{\mathfrak{s}} \sslash \widehat{W_{\mathfrak{s}}}$ is $\phi_{\pi}(1,\diag(t,t^{-1}))$ where $\pi=\mu_{\mathfrak{s}}^{-1}[t,\rho]$ and $\phi_\pi$ is the Langlands parameter of the representation $\pi$. In this paper, if $\mathfrak{s}=[M,\sigma]$ we show a more general formula for the correcting cocharacter of $[t,\rho] \in T_{\mathfrak{s}} \sslash \widehat{W_{\mathfrak{s}}}$, namely: $\phi_{\pi}(1,\diag(t,t^{-1}))/\phi_{\sigma}(1,\diag(t,t^{-1}))$ where $\pi=\mu_{\mathfrak{s}}^{-1}[t,\rho]$, $\phi_{\pi}$ and $\phi_{\sigma}$ are the Langlands parameters of the representations $\pi$ and $\sigma$ respectively.

Here we prove the ABPS conjecture for split classical groups by establishing a \emph{Galois version} of the ABPS conjecture, obtained by replacing the representations with their (enhanced) Langlands parameters.  To do this we use \cite{Moussaoui:2015} which shows how to convert the supercuspidality of the representation into a condition on the corresponding (enhanced) Langlands parameter. 

The article is organized as follows. 
In Section~\ref{sec:Springer}, we review the generalized Springer correspondence which will be a tool for the next steps. Here we give the examples of $\GL_n$, $\Sp_6$ and $\SO_4$.
In Section~\ref{sec:Langlands} we briefly recall the local Langlands correspondence for split groups, paying special attention to the case of split classical groups. Then we recall the notion of cuspidal enhanced Langlands parameters from \cite{Moussaoui:2015} and we explain how to construct the cuspidal support of an enhanced Langlands parameter in the case of split classical groups.
Finally, in Section~\ref{sec:proof}, after finding the predicted correcting cocharacters, we prove the ABPS conjecture for split classical groups.  We give a concrete example to illustrate it in the case of $\Sp_4(F)$.\\

\section{Springer correspondence}\label{sec:Springer}

Let $H$ be a complex reductive algebraic group and consider the set $$\mathcal{U}_H^e=\{ (\mathcal{C}_u^{H},\eta) \mid u \in H \,\,\text{unipotent}, \,\, \eta \in \Irr(A_H(u))\},$$ where $\mathcal{C}_u^H$ denotes the $H$-conjugacy class of $u$ and $A_H(u)=Z_H(u)/Z_H(u)^{\circ}$ with $Z_H(u)$ the centralizer of $u$ in $H$. We denote the Weyl group of $H$ by $W_H=N_H(T)/T$ with $T$ a maximal torus of $H$. Suppose from here that $H$ is connected.

\begin{example}
Let $n \geqslant 1$ be an integer and consider the group $H=\GL_n(\C)$. For any element $u \in H$, the group $A_H(u)$ is trivial. A maximal torus $T$ of $H$ is the group of diagonal matrices and the Weyl group of $H$ is $W_H \simeq \mathfrak{S}_n$, the symmetric group over $n$ letters. Moreover, by the Jordan classification, the set $\mathcal{U}_H^e$ is parametrized by $\mathcal{P}(n)$, the set of partitions of $n$, as follows : $$ \begin{array}{ccc}
\mathcal{P}(n) & \longrightarrow & \mathcal{U}_H^e \\
(p_1,\ldots,p_r) & \longmapsto & \left(\begin{pmatrix}
J_{p_1} &  &  \\ 
 & \ddots  &  \\ 
 &  & J_{p_r}
\end{pmatrix}, \mathrm{triv} \right)
\end{array}  $$ with $$J_d=\begin{pmatrix}
1 & 1 &  & \\ 
 & 1  & 1 &  \\ 
 & & \ddots & \ddots \\
 &  && 1 & 1\\
 & && & 1
\end{pmatrix} \in \GL_d(\C)$$
By the theory of Young diagrams, irreducible representations of $W_H \simeq \mathfrak{S}_n$ are parametrized by $\mathcal{P}(n)$. This gives a bijection between $\Irr(W_H)$ and $\mathcal{U}_H^e$.
\end{example}

\subsection{Ordinary Springer correspondence}

In general, when $H$ is different from $\GL_n$, we do not have a bijection between $\Irr(W_H)$ and $\mathcal{U}_H^e$ but there is an embedding $\Irr(W_H) \hookrightarrow \mathcal{U}_H^e$; this embedding is called the ordinary Springer correspondence. It was studied by Springer during the 1970s in \cite{Springer:1978aa}. The ordinary Springer correspondence for $H$ relates two different objects in nature: irreducible representations of the Weyl group of $H$ and pairs ($\mathcal{C}^H_u, \eta)$, where $\mathcal{C}^H_u$ is a unipotent orbit in $H$ and $\eta$ is an irreducible representation of $A_H(u)$.

The Springer correspondence can be described combinatorially.

\begin{example}\label{ordspringer}
Recall that the unipotent classes of $H=\Sp_{2n}(\C)$ are in bijection with partitions of $2n$ for which the odd parts have even multiplicity. 
The Weyl group $W_H$ of $H$ is isomorphic to $\mathfrak{S}_n \rtimes (\Z/2\Z)^n$ and its irreducible representations are in bijection with the set of bipartitions of $n$, {\it i.e.}, the pairs $(\alpha,\beta)$ where $\alpha, \beta$ are partitions (perhaps trivial) such that $|\alpha|+|\beta|=n$. For instance, the trivial representation corresponds to the partition $(n,0)$ while the sign representation corresponds to the partition $(0,1^n)$. \\
See Table~\ref{table:Sp6-1} for the case $H=\Sp_6(\C)$. 
\end{example}

\begin{table}[H]
\begin{center}
\renewcommand\arraystretch{1.2}
$$\begin{array}{|c|c|c|c|}
\hhline{----}
u & A_H(u) & \Irr(A_H(u)) & \Irr(W_H^L) \\ \hhline{====}
 \multirow{2}*{$(6)$} & \multirow{2}*{$\Z/2\Z$} & \cellcolor{gris1} 1 & \cellcolor{gris1} \rho_{(3,\varnothing)} \\ \hhline{~~--}
 &  & \zeta & \\ \hhline{----}
\multirow{4}*{$ (4,2)$} & \multirow{4}*{$(\Z/2\Z)^2$} &\cellcolor{gris1}  1 \boxtimes 1 &\cellcolor{gris1}  \rho_{(2,1)} \\ \hhline{~~--}
 & & \cellcolor{gris1}  \zeta \boxtimes \zeta  & \cellcolor{gris1} \rho_{(\varnothing,3)}  \\ \hhline{~~--}
 &  & 1 \boxtimes \zeta &  \\ \hhline{~~--}
&  & \zeta \boxtimes 1 &  \\ \hhline{----}
\multirow{2}*{$ (4,1^2)$} & \multirow{2}*{$\Z/2\Z$} & \cellcolor{gris1}  1 & \cellcolor{gris1}  \rho_{((2,1),\varnothing)} \\ \hhline{~~--}
 &  & \zeta & \\ \hhline{----}
 (3^2) & \{1\} & \cellcolor{gris1}  1 & \cellcolor{gris1} \rho_{(1,2)} \\ \hhline{----}
\multirow{2}*{$(2^3)$} & \multirow{2}*{$\Z/2\Z$} & \cellcolor{gris1} 1& \cellcolor{gris1}  \rho_{(1^2,1)}  \\ \hhline{~~--}
& & \zeta  & \\ \hhline{----}
\multirow{2}*{$ (2^2,1^2)$} & \multirow{2}*{$\Z/2\Z$} &\cellcolor{gris1}  1  & \cellcolor{gris1} \rho_{(1,1^2)} \\ \hhline{~~--}
 & &\cellcolor{gris1}  \zeta &\cellcolor{gris1}  \rho_{(\varnothing,(2,1))} \\ \hhline{----}
\multirow{2}*{$(2,1^4)$} & \multirow{2}*{$\Z/2\Z$} &\cellcolor{gris1} 1  &\cellcolor{gris1}  \rho_{(1^3,\varnothing)}  \\ \hhline{~~--}
&  & \zeta  &\\ \hhline{----}
(1^6) & \{1\} &\cellcolor{gris1}  1  & \cellcolor{gris1} \rho_{(\varnothing,1^3)} \\ \hhline{----}
\end{array} $$
\end{center}
\caption{Springer correspondence for $\Sp_6(\C)$}
\label{table:Sp6-1}
\end{table}%

As we can see, some pairs $(\mathcal{C}_{u}^{H},\eta)$ do not appear in the ordinary Springer correspondence. The parametrization of such pairs is the purpose of the next section.

\subsection{Generalized Springer correspondence}\label{sec:GSC}

One can ask how to describe elements in $\mathcal{U}_H^e$ which are not in the image of the ordinary Springer correspondence $\Irr(W_H) \hookrightarrow \mathcal{U}_H^e$. This was completly established by Lusztig in \cite{Lusztig:1984aa} and called the generalized Springer correspondence, as we briefly recall here.

In order to describe the missing pieces, Lusztig defined fundamental blocks, called cuspidal triples, consisting of $H$-conjugacy classes of triples $(L,\mathcal{C}_{v}^L,\varepsilon)$ where $L$ is a Levi subgroup of $H$, $v$ is a unipotent element of $L$ and $\varepsilon \in \Irr(A_L(v))$ is an irreducible \textbf{cuspidal} representation of $A_L(v)$. To each $(\mathcal{C}_u^H,\eta) \in \mathcal{U}_H^e$, he associated a unique triple $(L,\mathcal{C}_v^L,\varepsilon)$; see \cite[6.3,6.4]{Lusztig:1984aa}. All elements associated to a fixed triple $(L,\mathcal{C}_v^L,\varepsilon)$ are parametrized by $\Irr(W_H^L)$ with $W_H^L=N_H(L)/L$ \cite[6.4]{Lusztig:1984aa}.

The notion of cuspidal representation of $A_H(u)$ was introduced by Lusztig in \cite[2.4,6.2]{Lusztig:1984aa} and involves geometric objects. We now review this notion as it appears in \cite{Lusztig:1984aa}. Let $u\in H$ be a unipotent element and $\varepsilon \in \Irr(A_H(u))$.
Let $P=MN$ be a parabolic subgroup of $H$ and $v \in M$ be a unipotent element. Set $$Y_{P,u,v}=\left\{ hZ_M(v)^{\circ}N \in H/Z_M(v)^{\circ}N \mid h \in H, h^{-1}uh \in vN \right\}$$ and $$d_{u,v}=\frac{1}{2}(\dim Z_H(u) - \dim Z_M(v)).$$ 
Then $\dim Y_{P,u,v} \leqslant d_{u,v}$ \cite[1.1]{Lusztig:1984aa}.
The group $Z_H(u)$ acts on $Y_{P,u,v}$ by left translation; this action factorizes to an action of $A_H(u)$ on the set of irreducible components of $Y_{P,u,v}$ of dimension $d_{u,v}$.
Let $S_{u,v}$ be the resulting representation of $A_H(u)$.

Then $\varepsilon$ is a \emph{cuspidal representation of $A_H(u)$} if for all proper parabolic subgroups $P=MN$ of $H$ and for all unipotent $v \in M$, we have 
\[
\Hom_{A_H(u)}(\varepsilon,S_{u,v})=0.
\]

\begin{example}
If $P=B=TU$, then $$Y_{B,u,1}=\left\{ gB \in H/B \mid g \in H, g^{-1}ug \in U \right\}=\left\{B' \in \mathcal{B} \mid u \in B'\right\}=\mathcal{B}_u,$$ which is the Springer fiber of $u$. It was through this variety that Springer established his original correspondence.
\end{example}

We may now state the generalized Springer correspondence.
Let $\mathcal{S}_H$ be the set of $H$-conjugacy classes of cuspidal triples $(L,\mathcal{C}_v^L,\varepsilon)$ where
\begin{itemize}
\item $L$ is a Levi subgroup of $H$;
\item $v \in L$ is a unipotent element of $L$;
\item $\varepsilon \in \Irr(A_L(v))$ is a cuspidal representation.
\end{itemize} 
\begin{theorem}[{Lusztig, \cite[6.5,9.2]{Lusztig:1984aa}}]
Let $H$ be a connected complex algebraic group.
There is a surjective map 
\[
\Psi_H : \mathcal{U}_H^e \longrightarrow \mathcal{S}_H
\]
and, for each $\mathfrak{t}=[L,\mathcal{C}_v^L,\varepsilon]\in \mathcal{S}_H$, a natural bijection 
\[
\Psi_H^{-1}(\mathfrak{t}) \longleftrightarrow \Irr(N_H(L)/L).
\]
\end{theorem}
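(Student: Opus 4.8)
The plan is to recall the architecture of Lusztig's construction in \cite{Lusztig:1984aa} and assemble the statement from the intermediate results cited there, rather than to reprove the generalized Springer correspondence from scratch. The starting point is the perverse-sheaf machinery: for each cuspidal triple $\mathfrak{t}=[L,\mathcal{C}_v^L,\varepsilon]\in\mathcal{S}_H$ one forms the induced complex on the variety of unipotent elements of $H$, obtained by pushing forward along the generalized Springer map attached to the parabolic $P=LN$ with Levi $L$; Lusztig shows (loc.\ cit.\ \S6) that this complex is semisimple and that its simple summands, with their multiplicity spaces, are indexed by $\Irr(W_H^L)$ where $W_H^L=N_H(L)/L$ acts on the relevant local system. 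This gives, for each $\mathfrak{t}$, an injection of $\Irr(W_H^L)$ into $\mathcal{U}_H^e$ by reading off the support orbit and the local system (equivalently the $A_H(u)$-representation) of each simple summand. That is the bijection $\Psi_H^{-1}(\mathfrak{t})\longleftrightarrow\Irr(N_H(L)/L)$.

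First I would make precise the target map $\Psi_H$: given $(\mathcal{C}_u^H,\eta)\in\mathcal{U}_H^e$, the corresponding $A_H(u)$-equivariant irreducible local system on $\mathcal{C}_u^H$ underlies a unique simple perverse sheaf (the IC sheaf), and by the decomposition theorem applied to the whole unipotent variety this simple object occurs in exactly one of the induced complexes attached to the triples in $\mathcal{S}_H$ — this is the key finiteness/disjointness statement, \cite[6.3,6.4]{Lusztig:1984aa}, which rests on the cleanness of cuspidal local systems. Sending $(\mathcal{C}_u^H,\eta)$ to the class of that unique triple defines $\Psi_H$. Then surjectivity of $\Psi_H$ is automatic: every cuspidal triple contributes a nonzero induced complex, hence at least one simple summand, hence at least one pair in its fiber — indeed the fiber is nonempty because $\Irr(W_H^L)$ always contains the trivial representation, whose summand has support the orbit induced from $\mathcal{C}_v^L$.

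The remaining content is to organize these citations into the two clauses of the theorem and to check the bijection $\Psi_H^{-1}(\mathfrak{t})\longleftrightarrow\Irr(W_H^L)$ is exactly the one in \cite[6.5]{Lusztig:1984aa}: injectivity comes from multiplicity-one of the summands together with the fact that distinct irreducibles of $W_H^L$ give non-isomorphic simple constituents, and exhaustiveness of the fiber comes from the fact that the induced complex's full isotypic decomposition under $W_H^L$ is accounted for by its simple summands. The reference \cite[9.2]{Lusztig:1984aa} then supplies the verification that for classical $H$ the groups $W_H^L$ and the cuspidal triples are as expected, which is what makes the correspondence combinatorially usable (cf.\ Table~\ref{table:Sp6-1}), though this part is not strictly needed for the bare existence statement.

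The main obstacle, conceptually, is the disjointness in \cite[6.4]{Lusztig:1984aa}: that a given simple perverse sheaf appears in the induced complex of \emph{one and only one} cuspidal triple. This is where the definition of cuspidality — the vanishing $\Hom_{A_H(u)}(\varepsilon,S_{u,v})=0$ recalled above — does real work, via Lusztig's cleanness results and a restriction-to-parabolic argument showing that a sheaf supported on an orbit meeting a proper Levi in a non-cuspidal way must already occur in a smaller induced complex. I would not attempt to reprove this; I would cite it and note that everything else is bookkeeping with the decomposition theorem.
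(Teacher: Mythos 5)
The paper offers no proof of this statement---it is quoted directly from Lusztig \cite[6.5,9.2]{Lusztig:1984aa}---so there is nothing internal to compare against, and your outline is a faithful summary of Lusztig's actual argument: induction of cuspidal local systems, semisimplicity of the induced complex via the decomposition theorem, disjointness of the blocks via cleanness of cuspidal local systems, and the identification of the endomorphism algebra with the group algebra of $N_H(L)/L$. Two minor imprecisions, neither affecting correctness: the simple summand attached to $\rho\in\Irr(N_H(L)/L)$ occurs with multiplicity $\dim\rho$ rather than one (the bijection comes from the isotypic decomposition under the endomorphism algebra, not from multiplicity one), and \cite[9.2]{Lusztig:1984aa} is the statement that $N_H(L)/L$ is a Coxeter group with the stated endomorphism algebra, not the classical-group classification of cuspidal triples.
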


For all $\mathfrak{t}=[L,\mathcal{C}_v^L,\varepsilon]$, set $\mathcal{M}_{\mathfrak{t}}=\Psi_H^{-1}(\mathfrak{t})$. 
The map $\Psi_H$ induces a decomposition of $\mathcal{U}_H^e$ : $$\mathcal{U}_H^e=\bigsqcup_{\mathfrak{t}\in \mathcal{S}_H} \mathcal{M}_{\mathfrak{t}}.$$ 
The ordinary Springer correspondence is recovered from the Springer correspondence by restricting to the case $\mathfrak{t}=(T,\{1\},1)$ where $T$ is a maximal torus of $H$. It is remarkable that the Levi subgroups $L$ of $H$ which appear in the generalized Springer correspondence for $H$ are very special. In particular, the relative Weyl group $W_L^H=N_H(L)/L$ is a Coxeter group \cite[9.2]{Lusztig:1984aa} which is not true in general. This property is an important fact.

Let us describe the triples $(H,\mathcal{C}_v^H,\varepsilon) \in \mathcal{S}_H$ for certain groups $H$.
\begin{itemize}
\item $H=\SL_n(\C)$. If $(H,\mathcal{C}_v^H,\varepsilon) \in \mathcal{S}_H$ then the unipotent element $v$ corresponds to the partition $(n)$, in which case $A_H(v)=\Z/n\Z$. 
The cuspidal representations $\varepsilon$ which appear in $(H,\mathcal{C}_v^H,\varepsilon) \in \mathcal{S}_H$ are precisely those representations of $A_H(v)$ for which $\ker(\varepsilon)=\{0\}$. In particular, the cardinality of the set of the cuspidal representations of $A_H(v)$ is $\phi(n)$ (Euler's $\phi$-function).
\item $H=\GL_n(\C)$. If $(H,\mathcal{C}_v^H,\varepsilon) \in \mathcal{S}_H$ then necessarily $n=1$, $v=1$ and $A_H(v)=\{1\}$. 
\item $H=\Sp_{2n}(\C)$. If $(H,\mathcal{C}_v^H,\varepsilon) \in \mathcal{S}_H$ then $n=\frac{d(d+1)}{2}$ for some integer $d$ and $v$ corresponds to the partition $(2d,2d-2,\ldots,4,2)$, in which case $A_H(v)=\prod_{i=1}^{d} \langle z_{2i} \rangle \simeq (\Z/2\Z)^d$. The representation $\varepsilon$ which appears in $(H,\mathcal{C}_v^H,\varepsilon) \in \mathcal{S}_H$ is precisely that for which $\varepsilon(z_{2i})=(-1)^i$.
\item $H=\SO_{n}(\C)$. If $(H,\mathcal{C}_v^H,\varepsilon) \in \mathcal{S}_H$ then $n=d^2$ for some integer $d$ and $v$ corresponds to the partition $(2d-1,2d-3,\ldots,3,1)$, in which case $A_H(v)=\prod_{i=1}^{d-1} \langle z_{2i+1}z_{2i-1} \rangle \simeq (\Z/2\Z)^{d-1}$ and $\varepsilon(z_{2i+1}z_{2i-1})=-1$.
\end{itemize}

\begin{example}
We come back to our example of $\Sp_6(\C)$. The Levi subgroups of $H=\Sp_6(\C)$ are: $\Sp_6(\C), \;\; \GL_1(\C) \times \Sp_4(\C), \;\; \GL_2(\C) \times \Sp_2(\C), \;\; \GL_1(\C)^2 \times \Sp_2(\C), \;\; \GL_3(\C), \;\; \GL_2(\C) \times \GL_1(\C), \;\;\GL_1(\C)^3.$
The only Levi subgroups of $\Sp_6(\C)$ which can appear in the Springer correspondence for $\Sp_6(\C)$ are $H=\Sp_6(\C), M=\GL_1(\C)^2 \times \Sp_2(\C)$ and $T=\GL_1(\C)^3$. We have : 
\begin{table}[H]
\begin{center}
\renewcommand\arraystretch{1.2}
$$\begin{array}{|c|c|}
\hhline{--}
L & N_H(L)/L \\ \hhline{==}
\Sp_6(\C) & \{1\} \\ \hline
\GL_1(\C)^2 \times \Sp_2(\C) & \mathfrak{S}_2 \ltimes (\Z/2\Z)^2  \\ \hhline{--}
\GL_1(\C)^3 & \mathfrak{S}_3 \ltimes (\Z/2\Z)^3 \\ \hhline{--}
\end{array}$$
\end{center}
\end{table}
\end{example}
Table~\ref{table:Sp6-2} describes the generalized Springer correspondence for $\Sp_6(\C)$; the meaning of the $u$-symbols is given in \cite[3.2]{Achar:2007} or \cite[13.3]{Carter:1993} and included here only for completeness.

\begin{table}[H]
\begin{center}
\renewcommand\arraystretch{1.2}
$$\begin{array}{|c|c|c|c|c|c|}
\hhline{------}
u & A_H(u) & \Irr(A_H(u)) & \text{$u$ symbol}& L & \Irr(W_H^L) \\ \hhline{======}
 \multirow{2}*{$(6)$} & \multirow{2}*{$\Z/2\Z$} & \cellcolor{gris1} 1 &\cellcolor{gris1} \left( \begin{smallmatrix} 3 \\ - \end{smallmatrix} \right)  & \cellcolor{gris1} T & \cellcolor{gris1} \rho_{(3,\varnothing)} \\  \hhline{~~----}
 &  & \cellcolor{gris2} \zeta & \cellcolor{gris2}\left( \begin{smallmatrix}  - \\  3 \end{smallmatrix} \right)   & \cellcolor{gris2}M & \cellcolor{gris2} \rho_{(2,\varnothing)}'\\ \hhline{------}
\multirow{4}*{$ (4,2)$} & \multirow{4}*{$(\Z/2\Z)^2$} &\cellcolor{gris1} 1 \boxtimes 1 &\cellcolor{gris1} \left( \begin{smallmatrix} 0 &  & 4 \\   & 2 & \end{smallmatrix} \right)  & \cellcolor{gris1} T & \cellcolor{gris1} \rho_{(2,1)} \\ \hhline{~~----}
 & & \cellcolor{gris1}\zeta \boxtimes \zeta &\cellcolor{gris1} \left( \begin{smallmatrix} 0 &  & 2 \\   & 4 & \end{smallmatrix} \right)   &\cellcolor{gris1} T &\cellcolor{gris1} \rho_{(\varnothing,3)}  \\ \hhline{~~----}
 &  &\cellcolor{gris2} 1 \boxtimes \zeta &\cellcolor{gris2} \left( \begin{smallmatrix}  & 0 &  \\ 2 & • & 4\end{smallmatrix} \right)   & \cellcolor{gris2} M & \cellcolor{gris2} \rho_{(1^2,\varnothing)}'\\\hhline{~~----}
&  &\cellcolor{gris3} \zeta \boxtimes 1 &\cellcolor{gris3} \left( \begin{smallmatrix} 0 & 2 & 4 \\  & - & \end{smallmatrix} \right)   & \cellcolor{gris3} H & \cellcolor{gris3} 1 \\ \hhline{------}
\multirow{2}*{$ (4,1^2)$} & \multirow{2}*{$\Z/2\Z$} & \cellcolor{gris1} 1 & \cellcolor{gris1} \left( \begin{smallmatrix} 1 &  & 4 \\  & 1 & \end{smallmatrix} \right)  &\cellcolor{gris1} T & \cellcolor{gris1} \rho_{((2,1),\varnothing)} \\ \hhline{~~----}
 &  &\cellcolor{gris2} \zeta &\cellcolor{gris2} \left( \begin{smallmatrix}  & 1 &  \\  1&  & 4 \end{smallmatrix} \right) & \cellcolor{gris2} M & \cellcolor{gris2}\rho_{(1,1)}'\\ \hhline{------}
 (3^2) & \{1\} &\cellcolor{gris1} 1 &\cellcolor{gris1} \left( \begin{smallmatrix} 0 &  & 3 \\  & 3 & \end{smallmatrix} \right)   &\cellcolor{gris1} T &\cellcolor{gris1} \rho_{(1,2)} \\ \hhline{------}
\multirow{2}*{$(2^3)$} & \multirow{2}*{$\Z/2\Z$} &\cellcolor{gris1} 1&\cellcolor{gris1} \left( \begin{smallmatrix} 1 &  & 3 \\  & 2 & \end{smallmatrix} \right)  & \cellcolor{gris1}T &\cellcolor{gris1}\rho_{(1^2,1)}  \\ \hhline{~~----}
& & \cellcolor{gris2}\zeta &\cellcolor{gris2} \left( \begin{smallmatrix}  & 2 &  \\ 1 &  & 3 \end{smallmatrix} \right)   & \cellcolor{gris2} M &\cellcolor{gris2} \rho_{(\varnothing,2)}'\\ \hhline{------}
\multirow{2}*{$ (2^2,1^2)$} & \multirow{2}*{$\Z/2\Z$} & \cellcolor{gris1} 1 &\cellcolor{gris1}  \left( \begin{smallmatrix} 0 &  & 2 &  & 5 \\  & 2 &  & 4 &  \end{smallmatrix} \right)  &\cellcolor{gris1} T &\cellcolor{gris1} \rho_{(1,1^2)} \\ \hhline{~~----}
 & & \cellcolor{gris1} \zeta & \cellcolor{gris1} \left( \begin{smallmatrix} 0 &  & 2 &  & 4 \\  & 2 &  & 5 &  \end{smallmatrix} \right)  &\cellcolor{gris1} T &\cellcolor{gris1} \rho_{(\varnothing,(2,1))} \\ \hhline{------}
\multirow{2}*{$(2,1^4)$} & \multirow{2}*{$\Z/2\Z$} &\cellcolor{gris1} 1 &\cellcolor{gris1}  \left( \begin{smallmatrix} 1 &  & 3 &  & 5 \\  & 1 &  & 3 &  \end{smallmatrix} \right) & \cellcolor{gris1} T & \cellcolor{gris1} \rho_{(1^3,\varnothing)}  \\ \hhline{~~----}
&  & \cellcolor{gris2} \zeta &\cellcolor{gris2} \left( \begin{smallmatrix} & 1 &  & 3 &  \\  1 &  & 3 &  & 5 \end{smallmatrix} \right) & \cellcolor{gris2}M & \cellcolor{gris2}\rho_{(\varnothing,1^2)}'\\ \hhline{------}
(1^6) & \{1\} & \cellcolor{gris1} 1 & \cellcolor{gris1} \left( \begin{smallmatrix}0 &  & 2 &  & 4 &  & 6 \\  & 2 &  & 4 &  & 6 & \end{smallmatrix} \right)   & \cellcolor{gris1} T & \cellcolor{gris1} \rho_{(\varnothing,1^3)} \\ \hhline{------}
\end{array} $$
\end{center}
\caption{Generalized Springer correspondence for $\Sp_6(\C)$}
\label{table:Sp6-2}
\end{table}

Table~\ref{table:SO4} describes the generalized Springer correspondence for $\SO_4(\C)$; we will use this example in the Example \ref{exampleABPS}. The partition $(2^2)$ parametrizes two distinct unipotent classes of $\SO_4(\C)$; we distinguished them by adding an apostrophe. The irreducible representations of the Weyl group of type $D_{n}$ are described by the sets $\{\alpha,\beta\}$ where $\alpha$ and $\beta$ are partitions (perhaps trivial) such that $|\alpha| + |\beta|=n$. When $n$ is even and $\alpha=\beta$ then there are two distinct representations parametrized by $\{\alpha,\alpha\}$ and we distinguished them by adding an apostrophe.

\begin{table}[H]
\begin{center}
\renewcommand\arraystretch{1.2}
$$\begin{array}{|c|c|c|c|c|c|c|}
\hhline{-------}
u & A_H(u) & \Irr(A_H(u)) & \text{$u$ symbol}& L & \Irr(W_H^L) & \Irr(W_H^L) \otimes \sgn \\ \hhline{=======}
 \multirow{2}*{$(3,1)$} & \multirow{2}*{$\Z/2\Z$} & \cellcolor{gris1} 1 &\cellcolor{gris1} \left( \begin{smallmatrix} 0 \\ 2 \end{smallmatrix} \right)  & \cellcolor{gris1} T & \cellcolor{gris1} \rho_{\{2,\varnothing\}} & \cellcolor{gris1} \rho_{\{1^2,\varnothing\}} \\  \hhline{~~-----}
 &  & \cellcolor{gris2} \zeta & \cellcolor{gris2}\left( \begin{smallmatrix}  0 & & 2 \\  & - &  \end{smallmatrix} \right)   & \cellcolor{gris2}H & \cellcolor{gris2} 1 & \cellcolor{gris2} 1 \\ \hhline{-------}
(2^2) & \{1\} & \cellcolor{gris1} 1 & \cellcolor{gris1}\left( \begin{smallmatrix}  1 \\ 1  \end{smallmatrix} \right)   & \cellcolor{gris1} T & \cellcolor{gris1} \rho_{\{1,1\}} & \cellcolor{gris1} \rho_{\{1,1\}}' \\ \hhline{-------}
(2^2)' & \{1\} & \cellcolor{gris1} 1 & \cellcolor{gris1}\left( \begin{smallmatrix}  1 \\ 1  \end{smallmatrix} \right)   & \cellcolor{gris1} T & \cellcolor{gris1} \rho_{\{1,1\}}' & \cellcolor{gris1} \rho_{\{1,1\}} \\ \hhline{-------}
(1^4) & \{1\} & \cellcolor{gris1} 1 & \cellcolor{gris1}\left( \begin{smallmatrix}  0 & 2  \\ 1 & 3 \end{smallmatrix} \right)   & \cellcolor{gris1} T & \cellcolor{gris1} \rho_{\{1^2,\varnothing\}} & \cellcolor{gris1} \rho_{\{2,\varnothing\}}  \\ \hhline{-------}
\end{array} $$
\end{center}
\caption{Generalized Springer correspondence for $\SO_4(\C)$}
\label{table:SO4}
\end{table}

\subsection{Generalized Springer correspondence for orthogonal groups}

Let $n \geqslant1$ be an integer. In this paragraph $H$ denotes the orthogonal group $\O_n(\C)$ or the group  $\left\{ (x_i) \in \prod_{i=1}^{m} \O_{n_i}(\C) \mid \prod_{i=1}^{m} \det(x_i)=1 \right\}$ . Note that $H$ is disconnected. Here we specialize the definitions appearing in Section~\ref{sec:GSC} to this case and also state the generalized Springer correspondence for in this case. First, recall that $$\mathcal{U}_H^{e}=\left\{ (\mathcal{C}_{u}^{H}, \eta) \mid u \in H \mbox{ unipotent}, \eta \in \Irr(A_H(u)) \right\}.$$

\begin{definition}[{\cite[A.1]{Moussaoui:2015}}]
A subgroup $L$ of $H$ is said to be a \emph{quasi-Levi subgroup} of $H$ if there exists a torus $A \subset H^{\circ}$ such that $L=Z_H(A)$.
\end{definition}

\begin{example}

\begin{table}[H]
\begin{center}
\renewcommand{\arraystretch}{1.2}
$$\begin{array}{|c|c|c|c|c|c}
\hhline{-----~}
H & L^{\circ} & L & L/L^{\circ} & W_{L}^{H}/W_{L^{\circ}}^{H^{\circ}} &\\ \hhline{=====~}
\O_{2n+1} & \prod_{i=1}^{k} \GL_{n_i} \times \SO_{2n'+1} & \prod_{i=1}^{k} \GL_{n_i} \times \O_{2n'+1} & \Z/2\Z & \{1\} &  n_i \geqslant 0, n' \geqslant 0\\ 
\O_{2n}& \prod_{i=1}^{k} \GL_{n_i} \times \SO_{2n'} & \prod_{i=1}^{k} \GL_{n_i} \times \O_{2n'} & \Z/2\Z & \{1\} & n_i \geqslant 0, n' \geqslant 2 \\ 
 & \prod_{i=1}^{k} \GL_{n_i} & \prod_{i=1}^{k} \GL_{n_i} & \{ 1 \} & \Z/2\Z & n_i \geqslant 0\\ \hhline{-----~}
\end{array} $$
\end{center}
\caption{Levi and quasi-Levi subgroups of orthogonal groups}\label{exemplequasiLevi}
\end{table}
\end{example}

In the previous definition, since $L^{\circ}=Z_{H^{\circ}}(A)$, then $L^{\circ}$ is Levi subgroup of $H^{\circ}$. If $L$ is a quasi-Levi subgroup of $H$ and $u \in H^{\circ}$ is unipotent, then $W_{H^{\circ}}^{L^{\circ}}$ and $A_{H^{\circ}}(u)$ are normal subgroups of $W_{H}^{L}=N_{H}(L)/L$ and $A_H(u)$, respectively. If $\varepsilon \in \Irr(A_H(u))$, the restriction to the normal subgroup $A_{H^{\circ}}(u)$ decomposes as $$\restriction{\varepsilon}{A_{H^{\circ}}(u)}=\bigoplus_{\tau} \tau \otimes \C^{m},$$ where $\tau$ runs over some irreducible representations of $A_{H^{\circ}}(u)$ which are all conjugate by $A_H(u)$ and $m \geqslant 1$ is an integer.

\begin{definition}[{\cite[A.3]{Moussaoui:2015}}]\label{defcuspA}
Let $\varepsilon \in \Irr(A_H(u))$. Then $\varepsilon$ is a \emph{cuspidal representation of $A_{H}(u)$} if the irreducible representations of $A_{H^{\circ}}(u)$ which appear in the  restriction of $\varepsilon$ to $A_{H^{\circ}}(u)$ are cuspidal in the sense of Lusztig (recalled in Section~\ref{sec:GSC}.)
\end{definition}

Notice that in the restriction of $\varepsilon$ to $A_{H^{\circ}}(u)$, all the representations of $A_{H^{\circ}}(u)$ which appears are conjugate under $A_H(u)$. In particular, the cuspidality is preserved by such conjugation. As a consequence, one representation in the restriction is cuspidal if and only if all the representations are cuspidal.

We may now state the generalized Springer correspondence for orthogonal groups and some subgroups of orthogonal groups.
Let $\mathcal{S}_H$ be the set of $H$-conjugacy classes of triples $(L,\mathcal{C}_v^L,\varepsilon)$ for which 
\begin{itemize}
\item $L$ is a quasi-Levi subgroup of $H$ ;
\item $v \in L^{\circ}$ is a unipotent element ;
\item $\varepsilon \in \Irr(A_L(v))$ is a cuspidal representation.
\end{itemize}

\begin{theorem}\cite[A4,A8]{Moussaoui:2015}
Assume $H=\left\{ (x_i) \in \prod_{i=1}^{m} \O_{n_i}(\C) \mid \prod_{i=1}^{m} \det(x_i)=1 \right\}$ or $H=\O_n(\C)$.
 There is a surjective map 
\[
\Psi_H : \mathcal{U}_H^e \longrightarrow \mathcal{S}_H
\]
and, for each $\mathfrak{t}=[L,\mathcal{C}_v^L,\varepsilon]\in \mathcal{S}_H$, a natural bijection 
\[
\Psi_H^{-1}(\mathfrak{t}) \longleftrightarrow \Irr(N_H(L)/L).
\]
\end{theorem}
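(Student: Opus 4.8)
The plan is to reduce the disconnected statement to Lusztig's theorem for connected groups (the Theorem of Section~\ref{sec:GSC}) applied to $H^\circ$, together with Clifford theory for the finite group $A_H(u)/A_{H^\circ}(u)$ and for the quotient $W_H^L/W_{H^\circ}^{L^\circ}$. The key observation is that in all cases listed in Table~\ref{exemplequasiLevi} the component group $H/H^\circ$ is cyclic of order at most $2$, so every Clifford-theoretic obstruction group is either trivial or $\Z/2\Z$, and one never encounters a genuine extension problem.

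First I would make precise the decomposition of $\mathcal{U}_H^e$ relative to $\mathcal{U}_{H^\circ}^e$: for a unipotent $u\in H^\circ$, the group $A_H(u)$ contains $A_{H^\circ}(u)$ as a normal subgroup with cyclic quotient, and by Clifford theory each $\varepsilon\in\Irr(A_H(u))$ restricts to a sum of $A_H(u)$-conjugate irreducibles $\tau$ of $A_{H^\circ}(u)$, exactly as recalled in the excerpt. Dually, the $H$-conjugacy class of $u$ either equals a single $H^\circ$-class or splits into two; this splitting is controlled by whether $A_H(u)=A_{H^\circ}(u)$. So $\mathcal{U}_H^e$ is obtained from $\mathcal{U}_{H^\circ}^e$ by an orbit-and-fibre bookkeeping under the $\Z/2\Z$-action, and one checks that the total count matches. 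I would then define $\Psi_H$ on $(\mathcal{C}_u^H,\eta)$ by first applying $\Psi_{H^\circ}$ to any constituent $(\mathcal{C}_u^{H^\circ},\tau)$ of its restriction, obtaining a cuspidal triple $(L^\circ,\mathcal{C}_v^{L^\circ},\varepsilon_0)$ for $H^\circ$, and then showing that $L:=Z_H(Z(L^\circ)^\circ)$ is the unique quasi-Levi with $L\cap H^\circ=L^\circ$ and that $\varepsilon_0$ extends (or induces) to a cuspidal $\varepsilon\in\Irr(A_L(v))$ in the sense of Definition~\ref{defcuspA}; well-definedness (independence of the chosen constituent $\tau$) follows because the constituents are permuted by $A_H(u)$, hence their $\Psi_{H^\circ}$-images are permuted by $N_H(L^\circ)/L^\circ$ and define the same $H$-class of triples. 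Surjectivity of $\Psi_H$ is inherited from surjectivity of $\Psi_{H^\circ}$.

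The bijection $\Psi_H^{-1}(\mathfrak{t})\longleftrightarrow\Irr(N_H(L)/L)$ is then assembled from the connected case $\Psi_{H^\circ}^{-1}(\mathfrak{t}^\circ)\longleftrightarrow\Irr(W_{H^\circ}^{L^\circ})$ via the short exact sequence $1\to W_{H^\circ}^{L^\circ}\to W_H^L\to W_H^L/W_{H^\circ}^{L^\circ}\to 1$, whose quotient is again cyclic of order $\le 2$ (last column of Table~\ref{exemplequasiLevi}). On one side, inducing/restricting along $A_{H^\circ}(u)\subset A_H(u)$ matches, on the other side, inducing/restricting along $W_{H^\circ}^{L^\circ}\subset W_H^L$; the content is that these two Clifford correspondences are intertwined by $\Psi_{H^\circ}$ in an $H$-equivariant way. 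The compatibility of the relevant $2$-cocycles is automatic because $\Z/2\Z$ has no non-trivial Schur multiplier, so fixed-point representations and their extensions behave as in the split situation, and one never has to track a nontrivial projective class. This is precisely the argument structure of \cite[A.4,A.8]{Moussaoui:2015}, which I would cite for the detailed verification.

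I expect the main obstacle to be the bookkeeping when a unipotent class of $H^\circ$ splits into two $H$-classes while simultaneously $A_{H^\circ}(u)$-representations fuse — i.e.\ correctly matching the "splitting on the geometric side" with the "fusion on the representation side" so that the cardinalities on both ends of $\Psi_H^{-1}(\mathfrak{t})\longleftrightarrow\Irr(N_H(L)/L)$ agree in every case. For $\O_n(\C)$ this is the familiar phenomenon that very even partitions give two $\SO_n$-classes (type $D$); for the fibre-product group $\{(x_i)\mid\prod\det x_i=1\}$ one has to do the analogous count componentwise. Once the orbit/fusion dictionary is set up, the remaining verifications are the routine Clifford-theoretic identities over $\Z/2\Z$, and the theorem follows.
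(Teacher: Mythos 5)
The paper does not prove this theorem here — it is quoted from the appendix of \cite{Moussaoui:2015} — and your sketch reconstructs exactly the argument of that appendix: reduce to Lusztig's connected-group theorem for $H^\circ$ and transport the extra $\Z/2\Z$ of $H/H^\circ$ through Clifford theory, matching the splitting/fusion of unipotent classes and of representations of $A_{H^\circ}(u)$ against the dichotomy in Table~\ref{exemplequasiLevi} (the extra $\Z/2\Z$ lands either in $A_L(v)$ or in $W_H^L/W_{H^\circ}^{L^\circ}$, never both). Your identification of the correct splitting criterion ($A_H(u)=A_{H^\circ}(u)$ iff the $H$-class is a union of two $H^\circ$-classes) and the triviality of the Schur multiplier of a cyclic group are the right ingredients, so this is essentially the same proof.
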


For all $\mathfrak{t}=[L,\mathcal{C}_v^L,\varepsilon]$, set $\mathcal{M}_{\mathfrak{t}}=\Psi_H^{-1}(\mathfrak{t})$. 
The map $\Psi_H$ induces a decomposition of $\mathcal{U}_H^e$ : $$\mathcal{U}_H^e=\bigsqcup_{\mathfrak{t}\in \mathcal{S}_H} \mathcal{M}_{\mathfrak{t}}.$$ 

\section{Relation between the local Langlands correspondence and the Bernstein decomposition}\label{sec:Langlands}

\subsection{Local Langlands correspondence}
Let $F$ be a finite extension of a $p$-adic field $\Q_p$ and $G$ be (the $F$-points of) a split connected reductive group over $F$. Let $W_F$ (resp. $W_F'=W_F \times \SL_2(\C)$) be the Weil (resp. Weil-Deligne) group of $F$. We denote by $\widehat{G}$ the connected complex reductive group dual of $G$. A Langlands parameter for $G$ is a group homorphism $$\phi : W_F' \longrightarrow \widehat{G},$$ such that: 
\begin{itemize}
\item the restriction to $\SL_2(\C)$ is a morphism of algebraic groups;
\item the restriction to $W_F$ is continuous and $\phi(W_F)$ consists of semi-simple elements.\end{itemize} If we denote by $\Phi(G)$ the set of $\widehat{G}$-conjugacy classes of Langlands parameters for $G$. The local Langlands correspondence predicts the existence of a finite-to-one map $$\rec_G : \Irr(G) \longrightarrow \Phi(G),$$ which satisfies certain properties. To each $\phi \in \Phi(G)$, one can expect to attach a $L$-packet $\Pi_{\phi}(G)$ which is a finite set of irreducible representations of $G$ associated to $\phi$. Conjecturally, this set is parametrized by the irreducible representations of a finite group $\mathcal{S}_{\phi}^{G}$ which is a quotient of $A_{\widehat{G}}(\phi)$, the component group of the centralizer in $\widehat{G}$ of $\phi(W_F')$. Hence, if we denote by $$\Phi_{e}(G)=\{(\phi,\eta) \mid \phi \in \Phi(G), \eta \in \Irr(\mathcal{S}_{\phi}^{G})\},$$ the set of \emph{enhanced} Langlands parameters, then conjecturally, we have a bijection $$\Irr(G) \longleftrightarrow \Phi_{e}(G),$$ and a decomposition $$\Irr(G) = \bigsqcup_{\phi \in \Phi(G)} \Pi_{\phi}(G).$$

\subsection{Stable Bernstein centre}

Recently, inspired by Vogan, in \cite{Haines:2013aa} Haines has defined the stable Bernstein centre and stated some conjectures and gave some properties. In this paper we only consider the split case, but Haines treats the general case. One can view the stable Bernstein centre as an analogue of the Bernstein centre but for the Langlands parameters. 
It is conjectured that the Langlands correspondence is compatible with parabolic induction (see conjecture \ref{conjindLLC}). 
Haines defines a cuspidal datum as a pair $(\widehat{M}, \lambda)$ with $\widehat{M}$ a Levi subgroup of $\widehat{G}$ and $\lambda : W_F \longrightarrow \widehat{M}$ a discrete Langlands parameter for $M$ (which means that the image of the parameter does not factorize through a proper Levi subgroup of $\widehat{M}$). 
This plays the role of cuspidal data for Langlands parameters. 
Also, he attaches to each Langlands parameter of $G$ a cuspidal datum and an inertial class. If $\widehat{M}$ is a Levi subgroup of $\widehat{G}$, we denote by $\mathcal{X}(\widehat{M})=\left\{ \chi : W_F/I_F \longrightarrow Z_{\widehat{M}}^{\circ} \right\}$. Then by the Langlands correspondence for characters, $\mathcal{X}(\widehat{M})$ is in bijection with the group $\mathcal{X}(M)$ of the unramified characters of $M$.
Following Haines \cite[5.3.3]{Haines:2013aa}, consider two equivalence relations $\sim_{\Omega}$ and $\sim_{\mathcal{B}}$ on the pairs $(\widehat{M},\lambda)$ with $\widehat{M}$ a Levi subgroup of $\widehat{G}$ and $\lambda : W_F \longrightarrow \widehat{M}$ a discrete Langlands parameter of $M$ (trivial on $\SL_2(\C)$) defined by: 
\begin{enumerate}
\item $(\widehat{M}_1,\lambda_1) \sim_{\Omega} (\widehat{M}_2,\lambda_2)$ if and only if there exists $g \in \widehat{G}$ such that ${}^g \widehat{M}_1=\widehat{M}_2$ and ${}^g \lambda_1=\lambda_2$;
\item $(\widehat{M}_1,\lambda_1) \sim_{\mathcal{B}} (\widehat{M}_2,\lambda_2)$ if and only if there exists $g \in \widehat{G}$ and $\chi \in \mathcal{X}(\widehat{M}_2)$ such that ${}^g \widehat{M}_1=\widehat{M}_2$ and ${}^g \lambda_1=\lambda_2 \chi_2$.
\end{enumerate}
We denote by $(\lambda)_{\widehat{M}}$ the $\widehat{M}$-conjugacy class of $\lambda$. Moreover, if we denote by $\Omega^{\st}(G)$ (resp. $\mathcal{B}^{\st}(G)$) the equivalence classes for the relation $\sim_{\Omega}$ (resp. $\sim_{\mathcal{B}}$) then $$\Omega^{\st}(G)=\bigsqcup_{\wi \in \mathcal{B}^{\st}(G)} \mathcal{T}_{\wi} /\mathcal{W}_{\wi},$$ with if $\wi=[\widehat{M},\lambda]$ : \begin{itemize}
\item $\mathcal{T}_{\wi}=\{( \lambda \chi)_{\widehat{M}}, \chi \in  \mathcal{X}(\widehat{M}) \} \simeq  \mathcal{X}(\widehat{M})/\mathcal{X}(\widehat{M})(\lambda)$ and $\mathcal{X}(\widehat{M})(\lambda)=\{ \chi \in  \mathcal{X}(\widehat{M}) \mid (\lambda)_{\widehat{M}}=(\lambda \chi)_{\widehat{M}}\}$ ;
\item $\mathcal{W}_{\wi} =\{w \in N_{\widehat{G}}(\widehat{M})/\widehat{M} \mid \exists \chi \in \mathcal{X}(\widehat{M}), ({}^w \lambda)_{\widehat{M}}=(\lambda\chi)_{\widehat{M}} \}$
\end{itemize} 
To each Langlands parameter $\phi$, one can define its infinitesimal character $\lambda_{\phi}$ by defining for all $w \in W_F$ $$\lambda_{\phi}(w)=\phi(w,d_w),$$ with $d_w=\diag(|w|^{1/2}, |w|^{-1/2})$.

\begin{definition}[{Haines \cite[5.1]{Haines:2013aa}}]
Let $\widehat{M}$ be a Levi subgroup of $\widehat{G}$ and $\lambda : W_F \longrightarrow \widehat{M} \hookrightarrow \widehat{G}$ be a discrete Langlands parameter of $M$. Let $\wi=[\widehat{M},\lambda] \in \mathcal{B}^{\st}(G)$ the inertial class defined by $(\widehat{M},\lambda)$. Then the infinitesimal packet of $\lambda$ is
$$\Pi_{\lambda}^{+}(G)=\bigsqcup_{\substack{\phi \in \Phi(G) \\ \lambda_{\phi}=\lambda}} \Pi_{\phi}(G),$$ and the inertial packet of $\lambda$ is $$\Pi_{\wi}^{+}(G)=\bigsqcup_{\substack{\phi \in \Phi(G) \\ \lambda_{\phi}=\lambda\chi \\ \chi \in  \mathcal{X}(\widehat{M})}} \Pi_{\phi}(G)=\bigsqcup_{\lambda\chi \in \mathcal{T}_{\wi}/\mathcal{W}_{\wi}} \Pi_{\lambda\chi}^{+}(G).$$
\end{definition}

\begin{conj}[{Haines \cite[5.2.2]{Haines:2013aa},Vogan}]\label{conjindLLC}
Let $\sigma$ be an irreducible supercuspidal representation of $M$ and $\pi$ be an irreducible subquotient of the parabolically induced representation $i_{P}^G (\sigma)$ (where $P$ is a parabolic subgroup with Levi factor $M$). By the Langlands correspondence, let $$\phi_{\sigma}: W_F' \longrightarrow \widehat{M},$$ and $$\phi_{\pi} : W_F' \longrightarrow \widehat{G},$$ be the respective Langlands parameters of $\sigma$ and $\pi$. By the embedding $\widehat{M} \hookrightarrow \widehat{G}$, one can view $\phi_{\sigma}$ as a Langlands parameter of $G$. Then it is expected that we have the following equality : $$(\lambda_{\phi_\sigma})_{\widehat{G}}=(\lambda_{\phi_\pi})_{\widehat{G}}.$$
\end{conj}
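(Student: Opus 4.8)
The plan is to prove the statement first for general linear groups, then bootstrap to a split classical group $G$ via the standard‑representation functorial transfer, and finally descend the resulting conjugacy from $\GL_N$ back to $\widehat{G}$.

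The base case $G=\GL_n$ is itself instructive. For a Langlands parameter $\phi$ of $\GL_n$, decomposing the underlying Weil--Deligne representation into indecomposables and using that the $d$-dimensional irreducible representation of $\SL_2(\C)$ sends $\diag(a,a^{-1})$ to $\diag(a^{d-1},a^{d-3},\ldots,a^{-(d-1)})$, one sees that $\lambda_\phi$ is precisely the parameter of the Bernstein--Zelevinsky \emph{supercuspidal support} of $\phi$. Now the local Langlands correspondence for $\GL_n$ (Harris--Taylor, Henniart) carries the supercuspidal support of a representation to that of its parameter, and all irreducible subquotients of a representation parabolically induced from supercuspidals share one supercuspidal support; combining these, $\lambda_{\phi_\pi}=\lambda_{\phi_\sigma}$ (on the nose as $W_F$-modules, since $\phi_\sigma$ is trivial on $\SL_2(\C)$) whenever $\pi$ is an irreducible subquotient of $i_P^{\GL_n}(\sigma)$ with $\sigma$ supercuspidal.

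For a split classical group $G$, first record the trivial identity $\iota\circ\lambda_\phi=\lambda_{\iota\circ\phi}$ for any homomorphism $\iota$ out of $\widehat{G}$ (the element fed into the $\SL_2(\C)$-argument is unchanged), and take $\iota\colon\widehat{G}\hookrightarrow\GL_N(\C)$ to be the standard embedding. The Levi $M$ has the form $\GL_{n_1}\times\cdots\times\GL_{n_k}\times G_0$ with $G_0$ a smaller split classical group of the same type, and $\iota$ carries $\widehat{M}$ into a block‑diagonal Levi $M^\natural$ of $\GL_N$. Writing $\sigma=\sigma_1\otimes\cdots\otimes\sigma_k\otimes\sigma_0$, Arthur's correspondence attaches to $\pi$ and to $\sigma_0$ their standard‑representation transfers $\Pi$ on $\GL_N(F)$ and $\Sigma_0$ on $\GL_{N_0}(F)$, characterised by $\phi_\Pi=\iota\circ\phi_\pi$ and $\phi_{\Sigma_0}=\iota_0\circ\phi_{\sigma_0}$; in particular $\Sigma:=\sigma_1\otimes\cdots\otimes\sigma_k\otimes\Sigma_0\otimes\sigma_k^\vee\otimes\cdots\otimes\sigma_1^\vee$ is, like $\Sigma_0$, an irreducible subquotient of a representation induced from supercuspidals of general linear groups. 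The key step is that the transfer is compatible with parabolic induction, so that $\pi\subseteq i_P^G(\sigma)$ forces $\Pi$ to be an irreducible subquotient of a representation induced from $\Sigma$, hence (transitivity) from supercuspidals; the $\GL_N$ case then gives $\iota\circ\lambda_{\phi_\pi}=\lambda_{\phi_\Pi}=\lambda_{\phi_\Sigma}=\iota\circ\lambda_{\phi_\sigma}$ as $\GL_N(\C)$-conjugacy classes. Finally one descends: $\lambda_{\phi_\pi}$ and $\lambda_{\phi_\sigma}$ are homomorphisms $W_F\to\widehat{G}$ with semisimple image carrying the self‑duality structure of $\widehat{G}$, and for $\widehat{G}=\Sp_{2n}(\C)$ or $\SO_{2n+1}(\C)$ two such that are $\GL_N(\C)$-conjugate are $\widehat{G}$-conjugate, while for $\widehat{G}=\SO_{2n}(\C)$ they are $\O_{2n}(\C)$-conjugate --- exactly the equivalence in which the correspondence for split even orthogonal groups is formulated, consistent with the quasi‑Levi conventions recalled above.

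I expect the main obstacle to be the compatibility of Arthur's standard‑representation transfer with parabolic induction for a supercuspidal $\sigma$ that need not be tempered or unitary, and for \emph{all} irreducible subquotients of $i_P^G(\sigma)$ rather than just its Langlands quotient (for the Langlands quotient the statement is immediate from the construction). For this one invokes the explicit description of the transfer and of the constituents of parabolic inductions for classical groups due to M\oe glin and M\oe glin--Tadi\'c. An alternative, more internal route avoids $\GL_N$ altogether: deduce the statement from the compatibility of the Galois‑side cuspidal support of \cite{Moussaoui:2015} with parabolic induction, since $\lambda_\phi$ factors through that cuspidal support; but proving that compatibility --- close to the main theorem of the following section --- is itself the heart of the matter.
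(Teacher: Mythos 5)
First, note that the statement you are proving is stated in the paper as a \emph{conjecture} for an arbitrary split connected reductive $G$; the paper offers no proof of it, only the remark that it is known for $\GL_n$ (Haines) and for split classical groups (by citation to \cite[4.7]{Moussaoui:2015}). So there is no internal argument to compare against, and your proposal can at best establish the two known cases --- it says nothing about general $G$, for which the statement remains open. Within that scope, your $\GL_n$ argument is correct and is exactly the standard one: $\lambda_\phi$ computes the Galois-side supercuspidal support of the multisegment attached to $\phi$, LLC for $\GL_n$ respects supercuspidal support, and all constituents of $i_P^G(\sigma)$ share one supercuspidal support.

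For the classical groups, however, there is a genuine gap at the step you yourself flag: ``the transfer is compatible with parabolic induction, so that $\pi\subseteq i_P^G(\sigma)$ forces $\Pi$ to be an irreducible subquotient of a representation induced from $\Sigma$.'' For non-tempered $\sigma$ and an arbitrary irreducible subquotient $\pi$, the ``transfer'' $\Pi$ is \emph{defined} only as the $\GL_N$-representation whose parameter is $\iota\circ\phi_\pi$, where $\phi_\pi$ is produced by the Langlands classification from a tempered datum that a priori has nothing to do with $(M,\sigma)$; asserting that this $\Pi$ sits inside an induction from $\Sigma$ is essentially a restatement of the equality $(\lambda_{\phi_\pi})=(\lambda_{\phi_\sigma})$ you are trying to prove, so the argument as written is circular. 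The actual proof in the cited \cite[4.7]{Moussaoui:2015} does not proceed by a black-box transfer principle: it reduces $\pi$ to its tempered datum via the Langlands classification and transitivity of cuspidal support, reduces tempered to discrete via the structure of tempered packets, and then uses M\oe glin's explicit description (via Jordan blocks) of the cuspidal support of discrete series of classical groups and of the parameters of their supercuspidal supports. Your closing paragraph correctly identifies that this M\oe glin/M\oe glin--Tadi\'c input is where the content lies, but deferring it there means the central step of your proof is not supplied. The final descent step (from $\GL_N(\C)$-conjugacy to $\widehat{G}$-conjugacy, with the $\O_{2n}$ versus $\SO_{2n}$ caveat) is fine.
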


Currently, this conjecture is proved for $\GL_n$ (essentially from the proof of the Langlands correspondence see \cite[5.2.3]{Haines:2013aa}) and for the split classical groups by \cite[4.7]{Moussaoui:2015}.

\begin{example}
Let $G=\GL_2(F)$, $T \simeq (F^{\times})^2$ be the maximal torus of $G$ consisting of diagonal matrices. Let $| \, \cdot \,| $ be the norm of $F^{\times}$. Consider the irreducible supercuspidal representation $\sigma= | \, \cdot \,|^{1/2} \boxtimes | \, \cdot \,|^{-1/2}$ of $T$. Then the induced representation $i_B^G(\sigma)$ has two irreducibles subquotients : $\pi_1=1_{\GL_2}$ the trivial representation of $G$ and $\pi_2=\St_{\GL_2}$ the Steinberg representation of $G$.\\

The Langlands parameters of $\sigma, \pi_1$ and $\pi_2$ are, respectively:
$$\begin{array}[t]{cccl}
\phi_{\sigma} :  & W_F' & \longrightarrow & \widehat{T} \\
&(w,x) & \longmapsto & \diag(|w|^{1/2}, |w|^{-1/2})\\
\phi_{\pi_1} : & W_F' & \longrightarrow & \widehat{G} \\
&(w,x) & \longmapsto & \diag(|w|^{1/2}, |w|^{-1/2})\\
\phi_{\pi_2} : & W_F' & \longrightarrow & \widehat{G} \\
&(w,x) & \longmapsto & x
 \end{array}$$
Hence, we have $\lambda_{\phi_{\sigma}}=\lambda_{\phi_{\pi_1}}=\lambda_{\phi_{\pi_2}}$.
\end{example}

\subsection{Cuspidal enhanced Langlands parameter}\label{sectioncuspidallanglandsparameter}

Recall that we have two decompositions of $\Irr(G)$, one by the Bernstein decomposition, the other by the Langlands correspondence: $$\Irr(G)=\bigsqcup_{\mathfrak{s} \in \mathcal{B}(G)} \Irr(G)_{\mathfrak{s}} = \bigsqcup_{\phi \in \Phi(G)} \Pi_{\phi}(G).$$ We want to compare the two decompositions, in particular, we want to describe the Langlands parameters of supercuspidal representations and the cuspidal support map.\\

If $\varphi \in \Phi(G)$, recall that we have two groups $A_{\widehat{G}}(\varphi)$ and $\mathcal{S}_{\varphi}^G$ defined by : $$A_{\widehat{G}}(\varphi)=Z_{\widehat{G}}(\varphi)/Z_{\widehat{G}}(\varphi)^{\circ} \mbox{  and  } \mathcal{S}_{\varphi}^G=Z_{\widehat{G}}(\varphi)/Z_{\widehat{G}}(\varphi)^{\circ} \cdot Z_{\widehat{G}}.$$
Conjecturally $\Irr(\mathcal{S}_{\varphi}^G)$ parametrizes the $L$-packet $\Pi_{\varphi}(G)$ and we have a surjective map $A_{\widehat{G}}(\varphi) \twoheadrightarrow \mathcal{S}_{\varphi}^G$. We remark that if we denote  $H_{\varphi}^{G}=Z_{\widehat{G}}(\restriction{\varphi}{W_F})$, then we have the following equalities $$Z_{\widehat{G}}(\varphi)=Z_{\widehat{G}}(\restriction{\varphi}{W_F}) \cap Z_{\widehat{G}}(\restriction{\varphi}{\SL_2})=Z_{Z_{\widehat{G}}(\restriction{\varphi}{W_F})}(\restriction{\varphi}{\SL_2})=Z_{H_{\varphi}^{G}}(\restriction{\varphi}{\SL_2}).$$ The group $H_{\varphi}^{G}$ is a reductive group and if $u_{\varphi}=\varphi \left( \begin{pmatrix} 1 & 1 \\  0 & 1 \end{pmatrix} \right)$, then $A_{\widehat{G}}(\varphi)=A_{H_{\varphi}^{G}}(\restriction{\varphi}{\SL_2})$ and $A_{H_{\varphi}^{G}}(\restriction{\varphi}{\SL_2})=A_{H_{\varphi}^{G}}(u_{\varphi})$. In general, $H_{\varphi}^G$ is a disconnected group. 

\begin{definition}[{\cite[3.4]{Moussaoui:2015}}]\label{defcuspS}
Let $\varphi \in \Phi(G)$ be a discrete Langlands parameter, $\varepsilon \in \Irr(\mathcal{S}_{\varphi}^G)$ and $\widetilde{\varepsilon}$ the pullback of $\varepsilon$ to $A_{\widehat{G}}(\varphi)=A_{H_{\varphi}^G}(u_{\varphi})$. One says that $\varepsilon$ is a \emph{cuspidal representation of $\mathcal{S}_{\varphi}^G$} when $\widetilde{\varepsilon}$ is cuspidal with respect to the group $H_{\varphi}^G$ and $u_{\varphi}$ (see Definition~\ref{defcuspA}). We denote by $\Irr(\mathcal{S}_{\varphi}^G)_{\cusp}$ the set of irreducible cuspidal representations of $\mathcal{S}_{\varphi}^G$.
Moreover, one says that $\varphi$ is a \emph{cuspidal parameter} when $\Irr(\mathcal{S}_{\varphi}^G)_{\cusp}$ is not empty.
\end{definition}

\begin{conj}[{\cite[3.5]{Moussaoui:2015}}]\label{conjecturecusp}
Let $\varphi \in \Phi(G)$ be a Langlands parameter of $G$. The $L$-packet $\Pi_{\varphi}(G)$ contains supercuspidal representations of $G$ if and only if $\varphi $ is a cuspidal parameter of $G$. Moreover, if $\varphi$ is a cuspidal parameter of $G$, the supercuspidal representations in $\Pi_{\varphi}(G)$ are parametrized by $\Irr(\mathcal{S}_{\varphi}^{G})_{\cusp}$; in other words, there is a bijection $$\Pi_{\varphi}(G)_{\cusp} \longleftrightarrow \Irr(\mathcal{S}_{\varphi}^{G})_{\cusp}.$$
\end{conj}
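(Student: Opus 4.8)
\textbf{Proof proposal for Conjecture~\ref{conjecturecusp}.}

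The plan is to reduce the statement for an arbitrary split group $G$ to two already-understood inputs: first, the classification of discrete series $L$-packets and their internal parametrization by $\Irr(\mathcal{S}_\varphi^G)$ (which for split classical groups is provided by Arthur's endoscopic classification and for general $G$ is part of the expected properties of $\rec_G$ that we are entitled to assume); and second, the intrinsic characterization of which $(\mathcal C_u^H,\eta)$ are ``cuspidal'' in the geometric sense of Lusztig, as specialized in Definition~\ref{defcuspA} to the possibly-disconnected group $H_\varphi^G = Z_{\widehat G}(\varphi|_{W_F})$. The bridge between these two is the observation recorded just before Definition~\ref{defcuspS}: $A_{\widehat G}(\varphi) = A_{H_\varphi^G}(u_\varphi)$ with $u_\varphi = \varphi\bigl(\begin{smallmatrix}1&1\\0&1\end{smallmatrix}\bigr)$, so that the enhancement $\varepsilon \in \Irr(\mathcal S_\varphi^G)$ pulls back to an enhancement $\widetilde\varepsilon$ of the unipotent class of $u_\varphi$ in $H_\varphi^G$, and the question of whether $\varepsilon$ is a cuspidal representation of $\mathcal S_\varphi^G$ becomes a question about the generalized Springer correspondence for $H_\varphi^G$.

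First I would treat the ``only if'' direction. Suppose $\Pi_\varphi(G)$ contains a supercuspidal representation $\pi$, parametrized by some $\varepsilon$. If $\widetilde\varepsilon$ were \emph{not} cuspidal, then by Lusztig's theory (Section~\ref{sec:GSC}, in the disconnected form from \cite{Moussaoui:2015}) the pair $(\mathcal C_{u_\varphi}^{H_\varphi^G},\widetilde\varepsilon)$ would be associated to a cuspidal triple $(L,\mathcal C_v^L,\varepsilon')$ with $L$ a proper quasi-Levi subgroup of $H_\varphi^G$; I would then show that such an $L$ produces a proper Levi subgroup $\widehat M \subsetneq \widehat G$ containing $\varphi(W_F')$, hence that $\varphi$ factors through $\widehat M$, so that $\pi$ is a subquotient of a representation parabolically induced from $M$. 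Combined with the compatibility of cuspidal support with the Langlands correspondence (Conjecture~\ref{conjindLLC}, known for split classical groups by \cite[4.7]{Moussaoui:2015}) this contradicts supercuspidality of $\pi$. The ``if'' direction is the harder one: given a cuspidal parameter $\varphi$ and $\varepsilon \in \Irr(\mathcal S_\varphi^G)_{\cusp}$, I must produce a genuinely supercuspidal $\pi$ in $\Pi_\varphi(G)$ and show that the assignment $\varepsilon \mapsto \pi$ is a bijection onto $\Pi_\varphi(G)_{\cusp}$. The strategy is to argue contrapositively again: if $\pi = \rec_G^{-1}(\varphi,\varepsilon)$ were not supercuspidal, its cuspidal support $(M,\sigma)$ with $M \subsetneq G$ proper, transported through Conjecture~\ref{conjindLLC}, forces $\varphi$ to be $\widehat G$-conjugate into $\widehat M$; then the image of $A_{\widehat M}(\varphi)$ in $A_{\widehat G}(\varphi) = A_{H_\varphi^G}(u_\varphi)$ is exactly the stabilizer data of a proper quasi-Levi of $H_\varphi^G$, and one shows $\widetilde\varepsilon$ must occur in some $S_{u_\varphi,v}$ (in the notation of Section~\ref{sec:GSC}), contradicting cuspidality of $\varepsilon$. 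The bijectivity statement then follows by counting: on each side the relevant objects are indexed, via the generalized Springer correspondence for $H_\varphi^G$ and the parametrization of $\Pi_\varphi(G)$, by the same finite set.

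The main obstacle I anticipate is the precise dictionary between proper quasi-Levi subgroups $L \subset H_\varphi^G$ and proper Levi subgroups $\widehat M \subset \widehat G$ through which $\varphi$ factors — in particular handling the disconnectedness of $H_\varphi^G$ and the subtlety that the relevant groups in the generalized Springer correspondence for orthogonal and $\SO$ groups are quasi-Levi rather than Levi (Definition of quasi-Levi subgroup, Table~\ref{exemplequasiLevi}). One must check that the extra component $L/L^\circ$ does not spoil the correspondence and that cuspidality in the sense of Definition~\ref{defcuspA} (cuspidality of every constituent of $\widetilde\varepsilon|_{A_{H^\circ}(u)}$) matches exactly the discreteness-versus-reducibility behaviour of the parabolically induced representations on the $p$-adic side. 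For the split classical groups this matching is exactly where \cite{Moussaoui:2015} does the work, so in the end the proof will be essentially a careful assembly of: (i) the disconnected generalized Springer correspondence of Section~\ref{sec:GSC}; (ii) the explicit description of cuspidal symbols for $\Sp$, $\SO$, $\O$ listed after the statement of Lusztig's theorem; and (iii) the compatibility \cite[4.7]{Moussaoui:2015} of $\rec_G$ with parabolic induction.
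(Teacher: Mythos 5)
First, note that the statement you are proving is stated in the paper as a \emph{conjecture}: the paper itself offers no general proof, and only records (in Proposition~\ref{proparamcusp}) that it holds for $\GL_n(F)$, $\Sp_{2n}(F)$ and $\SO_n(F)$ by direct comparison of the explicit list of cuspidal parameters with the explicit classification of supercuspidal representations due to Harris--Taylor/Henniart/Scholze and to Arthur/M\oe glin. That comparison is a concrete case-by-case matching, not a formal deduction from compatibility of the Langlands correspondence with parabolic induction.

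Your proposed deduction contains a genuine gap at its central step. You claim that if $\widetilde\varepsilon$ is not cuspidal, the generalized Springer correspondence produces a proper quasi-Levi $L\subset H_\varphi^G$ and hence a proper Levi $\widehat M\subsetneq\widehat G$ through which $\varphi$ factors. This is false: the cuspidal support construction of Theorem~\ref{theoremesupportcuspidal} replaces the $\SL_2(\C)$-part of the parameter and preserves only the infinitesimal character $\lambda_\varphi$, not $\varphi$ itself. A discrete parameter can carry non-cuspidal enhancements without factoring through any proper Levi --- e.g.\ the Steinberg parameter, or the parameter $\zeta\omega\boxtimes(S_3\oplus S_1)\oplus 1$ in the paper's own $\Sp_4(F)$ example, whose single $L$-packet contains both non-supercuspidal discrete series $\delta,\delta'$ and supercuspidals $\sigma,\sigma'$. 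This same example shows why Conjecture~\ref{conjindLLC} cannot close the argument in either direction: it only asserts equality of infinitesimal characters, which is constant across an $L$-packet and therefore blind to the enhancement $\varepsilon$; yet the dichotomy supercuspidal/non-supercuspidal cuts \emph{through} individual packets. What is actually needed, and what \cite{Moussaoui:2015} supplies for classical groups, is M\oe glin's explicit criterion for which members of a packet (in terms of the Jordan block structure of $\varphi$ and an alternating condition on $\varepsilon$) are supercuspidal, which is then checked to coincide with Lusztig's geometric cuspidality condition. Your items (i)--(iii) are the right ingredients for the dictionary between quasi-Levis of $H_\varphi^G$ and Levis of $\widehat G$, but without the M\oe glin-type input the argument cannot distinguish the supercuspidal from the non-supercuspidal members of a fixed packet, and the proof does not go through.
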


In the following we describe the cuspidal Langlands parameters for $\GL_n(F), \,\, \Sp_{2n}(F)$ and $\SO_n(F)$. We denote by $S_a$ the irreducible representation of dimension $a$ of $\SL_2(\C)$ and by $I_O$ (resp. $I_S$) a set of irreducible representations of $W_F$ of orthogonal type (resp. symplectic type). This means for $I_O$ (resp. $I_S$) that the image of $\pi \in I_O$ can be factorized through an orthogonal group (resp. symplectic group).

\begin{prop}[{\cite[3.7]{Moussaoui:2015}}]\label{proparamcusp}
We keep same notations as before. The cuspidal Langlands parameters for $G$ are :
\begin{itemize}
\item $\GL_n(F)$, $$\varphi : W_F \longrightarrow \GL_n(\C), \,\, \text{irreducible (or equivalently, discrete)} \,\, ;$$
\item $\SO_{2n+1}(F)$, $$\varphi=\bigoplus_{\pi \in I_{O}} \bigoplus_{a=1}^{d_{\pi}} \pi \boxtimes  S_{2a} \bigoplus_{\pi \in I_{S}} \bigoplus_{a=1}^{d_{\pi}} \pi \boxtimes  S_{2a-1}, \,\, \forall \pi \in I_O, d_{\pi} \in \N, \,\, \forall \pi \in I_S, d_{\pi} \in \N^{*} ;$$
\item $\Sp_{2n}(F)$ or $\SO_{2n}(F)$, $$\varphi=\bigoplus_{\pi \in I_{S}} \bigoplus_{a=1}^{d_{\pi}} \pi \boxtimes  S_{2a} \bigoplus_{\pi \in I_{O}} \bigoplus_{a=1}^{d_{\pi}} \pi \boxtimes  S_{2a-1}, \,\, \forall \pi \in I_O, d_{\pi} \in \N^{*}, \,\, \forall \pi \in I_S, d_{\pi} \in \N.$$
\end{itemize}
The conjecture \ref{conjecturecusp} is true for $\GL_n(F), \,\, \Sp_{2n}(F)$ and $\SO_{n}(F)$.
\end{prop}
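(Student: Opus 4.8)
The plan is to read the classification off Definition~\ref{defcuspS}, which recasts cuspidality of $\varphi$ as a geometric condition on the pair $(H_\varphi^G,u_\varphi)$, and then to feed in the list of cuspidal pairs for complex classical groups recalled above; the truth of Conjecture~\ref{conjecturecusp} is obtained afterwards by comparing this list with the known description of the supercuspidal members of $L$-packets, due to Harris--Taylor and Henniart for $\GL_n$ and to Arthur and M\oe glin for the classical groups.

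\emph{The geometry.} Let $\varphi\in\Phi(G)$ and decompose $\varphi|_{W_F'}=\bigoplus_i\pi_i\boxtimes S_{a_i}$ into irreducible constituents, each $\pi_i$ an irreducible bounded representation of $W_F$. Grouping the indices by the isomorphism class of $\pi_i$, put $M_\pi=\bigoplus_{\pi_i\simeq\pi}\C^{a_i}$ and let $E_\pi=\{a_i:\pi_i\simeq\pi\}$ be the corresponding multiset of positive integers. Restricting the defining form of $\widehat G$ to each isotypic subspace $\pi\otimes M_\pi$ exhibits $H_\varphi^G=Z_{\widehat G}(\varphi|_{W_F})$ as a product $\prod_\pi\mathcal G(M_\pi)$ of general linear, symplectic and orthogonal groups (intersected, inside $\widehat G$, with a condition on determinants); these are exactly the groups, possibly disconnected, for which the generalized Springer correspondence was set up in Section~\ref{sec:Springer}. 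In this picture $u_\varphi=\varphi|_{\SL_2}$ is, in the factor $\mathcal G(M_\pi)$, the unipotent element of Jordan type $E_\pi$ on $M_\pi$. By Definitions~\ref{defcuspS} and~\ref{defcuspA}, $\varphi$ is cuspidal precisely when $(H_\varphi^G,u_\varphi)$ carries a cuspidal enhancement, and — since cuspidality of an irreducible representation of $A_{H_\varphi^G}(u_\varphi)$ is tested on its restriction to $A_{(H_\varphi^G)^\circ}(u_\varphi)=\prod_\pi A_{\mathcal G(M_\pi)^\circ}(u_\varphi)$ — this decouples into the requirement that, for each $\pi$, the unipotent class of $\mathcal G(M_\pi)^\circ$ of Jordan type $E_\pi$ lie in the support of a cuspidal local system, together with a small extra check that the enhancement descends along $A_{\widehat G}(\varphi)\twoheadrightarrow\mathcal S_\varphi^G$ (harmless because $Z_{\widehat G}$ is small here).

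\emph{Applying the classification.} The list of cuspidal pairs recalled above then forces each factor $\mathcal G(M_\pi)^\circ$ either to be a symplectic group $\Sp_{d_\pi(d_\pi+1)}(\C)$ with $E_\pi=(2d_\pi,2d_\pi-2,\ldots,2)$, or to be a special orthogonal group $\SO_{d_\pi^2}(\C)$ with $E_\pi=(2d_\pi-1,2d_\pi-3,\ldots,1)$, and it forces every general linear factor to be $\GL_1$; which of the two types occurs is dictated by the type of $\pi$ relative to $\widehat G$, because the form carried by $M_\pi$ is the product of the form on $\pi$ with the ambient form. Reading off the constituents $\pi\boxtimes S_a$, $a\in E_\pi$, in each of the four cases gives exactly the displayed forms — for $\GL_n(F)$ one has $H_\varphi^G=\GL_a(\C)$, so $a=1$ and $\varphi$ is irreducible — the alternatives $d_\pi\in\N$ versus $d_\pi\in\N^*$ merely recording whether a factor of the relevant type is allowed to be absent.

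\emph{Proof of Conjecture~\ref{conjecturecusp}.} For $\GL_n(F)$ it is immediate: the local Langlands correspondence matches supercuspidal representations with irreducible parameters, for which $\mathcal S_\varphi^G$ is trivial, so the packet is a single supercuspidal exactly when $\varphi$ is cuspidal. For $\Sp_{2n}(F)$ and $\SO_n(F)$ I would invoke Arthur's local Langlands correspondence and M\oe glin's refinement of it, together with her explicit determination of the supercuspidal members of a discrete $L$-packet: her criterion for $\Pi_\varphi(G)$ to contain a supercuspidal is precisely that each $E_\pi$ be an unbroken segment $(2,4,\ldots)$ or $(1,3,\ldots)$ — which is the shape of $\varphi$ found above — and her parametrization of $\Pi_\varphi(G)_{\cusp}$ by the characters of $\mathcal S_\varphi^G$ that alternate on consecutive blocks matches, block by block, the enhancements of the cuspidal local systems on $\Sp_{d(d+1)}(\C)$ and $\SO_{d^2}(\C)$ recalled above (the rule $\varepsilon(z_{2i})=(-1)^i$, respectively $\varepsilon(z_{2i+1}z_{2i-1})=-1$). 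The main obstacle is exactly this last comparison: one must check that M\oe glin's combinatorial ``alternating'' condition on $\Irr(\mathcal S_\varphi^G)$ coincides with Lusztig's geometric cuspidality condition on $\Irr(A_{H_\varphi^G}(u_\varphi))$ transported through Definition~\ref{defcuspS}, which forces one to keep careful track of the identification of the various component groups — in particular of the disconnectedness of the orthogonal factors of $H_\varphi^G$, which is precisely why the generalized Springer correspondence for orthogonal groups and some of their subgroups was developed in Section~\ref{sec:Springer}.
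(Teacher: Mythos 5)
Your proposal is correct and follows essentially the same route as the paper, which itself only cites \cite[3.7]{Moussaoui:2015} for the classification and adds one sentence for the last part: one computes $H_\varphi^G$ as a product of classical factors over the isotypic components of $\restriction{\varphi}{W_F}$, reads off the Jordan type of $u_\varphi$ on each, applies Lusztig's list of cuspidal pairs (extended to the disconnected orthogonal factors as in Section~\ref{sec:Springer}), and then compares with Harris--Taylor/Henniart for $\GL_n$ and with Arthur and M\oe glin's ``alternating'' criterion for supercuspidal members of packets of classical groups. The only point worth keeping explicit is that Definition~\ref{defcuspS} presupposes $\varphi$ discrete, which is what eliminates the general linear factors of $H_\varphi^G$ (and the non-self-dual constituents) rather than the geometric cuspidality condition alone.
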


The last part follows by comparison the work of Harris-Taylor, Henniart or Scholze for $\GL_n(F)$ and the work of Arthur and M\oe glin for the classical groups.

\subsection{Cuspidal support}

The cuspidal support of an irreducible representation of $G$ is a class (of $G$-conjugation) of a pair $(L,\sigma)$ with $L$ a Levi subgroup of $G$ and $\sigma$ an irreducible supercuspidal representation of $L$. By our previous conjecture \ref{conjecturecusp}, each such pair should correspond on the Galois side to a triple $(\widehat{L},\varphi,\varepsilon)$ with $\widehat{L}$ a Levi subgroup of $\widehat{G}$, $(\varphi,\varepsilon) \in \Phi_e(L)_{\cusp}$. \\

Recall that we denote $\mathcal{X}(\widehat{G})=\left\{ \chi : W_F/I_F \longrightarrow Z_{\widehat{G}}^{\circ} \right\}$ and that there is a bijection between $\mathcal{X}(\widehat{G})$ and the unramified characters of $G$. Define two relations  $\sim_{\Omega_e}$ and $\sim_{\mathcal{B}_e}$ on the (set of) triples $(\widehat{L},\varphi,\varepsilon)$ as in the previous paragraph : \begin{enumerate}
\item $(\widehat{L}_1,\varphi_1,\varepsilon_1) \sim_{\Omega_e} (\widehat{L}_2,\varphi_2,\varepsilon_2)$ if and only if there exists $g \in \widehat{G}$ such that ${}^g \widehat{L}_1=\widehat{L}_2, \,\, {}^g \varphi_1=\varphi_2$ and $\varepsilon_1^g=\varepsilon_2$ ;
\item $(\widehat{L}_1,\varphi_1,\varepsilon_1) \sim_{\mathcal{B}_e} (\widehat{L}_2,\varphi_2,\varepsilon_2)$ if and only if there exist $g \in \widehat{G}$ and $\chi \in  \mathcal{X}(\widehat{L}_2)$ such that ${}^g \widehat{L}_1=\widehat{L}_2, \,\, {}^g \varphi_1=\varphi_2 \chi_2$ and $\varepsilon_1^g=\varepsilon_2$ .
\end{enumerate}

Denote by $\Omega_{e}^{\st}(G)$ (resp. by $\mathcal{B}_{e}^{\st}(G)$) the equivalence classes of the relation  $\sim_{\Omega_e}$ (resp. $\sim_{\mathcal{B}_e}$). As before, we have $$\Omega_{e}^{\st}(G)=\bigsqcup_{\wj \in \mathcal{B}_{e}^{\st}(G)} \mathcal{T}_{\wj} /\mathcal{W}_{\wj},$$ with if $\wj=[\widehat{L},\varphi,\varepsilon]$ : \begin{itemize}
\item $\mathcal{T}_{\wj}=\{( \varphi\chi)_{\widehat{L}}, \chi \in \mathcal{X}(\widehat{L}) \} \simeq  \mathcal{X}(\widehat{L})/\mathcal{X}(\widehat{L})(\varphi)$ and $ \mathcal{X}(\widehat{L})(\varphi)=\{\chi \in \mathcal{X}(\widehat{L}) \mid (\varphi)_{\widehat{L}}=(\varphi \chi)_{\widehat{L}}\}$ ; 
\item $\mathcal{W}_{\wj} =\{w \in N_{\widehat{G}}(\widehat{L})/\widehat{L} \mid \exists \chi \in  \mathcal{X}(\widehat{L}), ({}^w \varphi)_{\widehat{L}}=(\varphi\chi)_{\widehat{L}} , \varepsilon^w \simeq \varepsilon \}$
\end{itemize} 

We use the bijection between $\Irr(G)$ and $\Phi_{e}(G)$ given by the local Langlands correspondence; we also use a bijection between $\Omega(G)$ and $\Omega_{e}^{\st}(G)$ found by combining the local Langlands correspondence for supercuspidal representations of the Levi subgroup of $G$ with proposition \ref{proparamcusp} and conjecture \ref{conjecturecusp}.
It follows that there is a cuspidal support map $\Phi_e(G) \to \Omega_e^\st(G)$ such that the following diagram is commutative:
\begin{center}
\begin{tikzcd}
 \Irr(G)   \arrow[swap]{d}{\Sc} \arrow{r}{\rec_G^e} &  \Phi_{e}(G) \arrow[dotted]{d}\\
\Omega(G)   \arrow[swap]{r}{\rec_{\Omega(G)}^{e}}   &\Omega_{e}^{\st}(G)
  \end{tikzcd}
\end{center}
It would be more interesting to define the cuspidal support of $(\phi,\eta)\in \Phi_e(G)$ without assuming the local Langlands correspondence. We solve that problem in the following theorem.

\begin{theorem}[{\cite[3.20]{Moussaoui:2015}}]\label{theoremesupportcuspidal}
Let $G$ be a split classical group, \it{i.e.} $G=\Sp_{2n}(F)$ or $G=\SO_n(F)$. There exists a well-defined surjective map $$\Scl : \begin{array}[t]{ccc}
\Phi_{e}(G) & \longrightarrow & \Omega_{e}^{\st}(G) \\
(\phi,\eta)  & \longmapsto & (\widehat{L}, \varphi,\varepsilon)
\end{array},$$ with the property that $\lambda_{\phi}=\lambda_{\varphi}$.
\end{theorem}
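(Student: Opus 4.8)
The plan is to build $\Scl$ from Lusztig's generalized Springer correspondence applied to the centralizer groups attached to the restriction of the parameter to the Weil group. So let $(\phi,\eta)\in\Phi_e(G)$, and set $H=H_\phi^G=Z_{\widehat G}(\restriction{\phi}{W_F})$, $u=u_\phi$, and let $\widetilde\eta\in\Irr(A_H(u))=\Irr(A_{\widehat G}(\phi))$ be the pullback of $\eta$, so that $(\mathcal C_u^H,\widetilde\eta)\in\mathcal U_H^e$. Since $G$ is a split classical group, $\widehat G$ is $\Sp_{2n}(\C)$ or $\SO_n(\C)$; decomposing $\restriction{\phi}{W_F}$ into $W_F$-isotypic blocks and grouping the self-dual blocks by type exhibits $H$ as a direct product of general linear groups and of groups of the form $\O_N(\C)$ or $\{(x_i)\in\prod_i\O_{n_i}(\C)\mid\prod_i\det x_i=1\}$ — exactly the class of (disconnected) groups for which the generalized Springer correspondence in terms of quasi-Levi subgroups was established in \cite[A.1--A.8]{Moussaoui:2015}. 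Applying $\Psi_H$ produces a cuspidal triple $\mathfrak t=[\mathcal L,\mathcal C_v^{\mathcal L},\varepsilon]\in\mathcal S_H$ with $\mathcal L$ a quasi-Levi subgroup of $H$, $v\in\mathcal L^\circ$ unipotent, and $\varepsilon\in\Irr(A_{\mathcal L}(v))$ cuspidal in the sense of Definition~\ref{defcuspA}.

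Next I would transport $\mathcal L$ to the dual of a Levi of $G$. Write $\mathcal L=Z_H(A)$ with $A\subseteq H^\circ$ a torus; then $\widehat L:=Z_{\widehat G}(A)$ is a Levi subgroup of $\widehat G$, one has $\phi(W_F)\subseteq Z_{\widehat G}(A)=\widehat L$ (since $A\subseteq H^\circ$ centralizes $\phi(W_F)$), and $Z_{\widehat L}(\restriction{\phi}{W_F})=Z_{\widehat G}(A)\cap Z_{\widehat G}(\restriction{\phi}{W_F})=Z_H(A)=\mathcal L$. Let $L$ be the Levi subgroup of $G$ dual to $\widehat L$. Choosing $\mathfrak{sl}_2$-triples for $u$ in $H$ and for $v$ in $\mathcal L$, with associated cocharacters $\nu_u\colon\mathbb G_m\to H$, $\nu_v\colon\mathbb G_m\to\mathcal L$ and associated homomorphism $\tau_v\colon\SL_2(\C)\to\mathcal L$, I define $\varphi\colon W_F'\to\widehat L$ by
\[
\restriction{\varphi}{\SL_2}=\tau_v,\qquad\varphi(w)=\phi(w)\,\nu_u(|w|^{1/2})\,\nu_v(|w|^{1/2})^{-1}\quad(w\in W_F).
\]
The key point that must be verified is that the twisting term $w\mapsto\nu_u(|w|^{1/2})\nu_v(|w|^{1/2})^{-1}$ takes values in $Z_{\widehat L}^\circ$; granting this, $\varphi$ is a genuine Langlands parameter of $L$, one has $u_\varphi=v$, $H_\varphi^L=\mathcal L$ and $A_{\widehat L}(\varphi)=A_{\mathcal L}(v)$, so $\varepsilon$ — cuspidal for $(\mathcal L,v)$ — makes $(\varphi,\varepsilon)\in\Phi_e(L)_{\cusp}$ in the sense of Definition~\ref{defcuspS}, and comparison with Proposition~\ref{proparamcusp} identifies $(\varphi,\varepsilon)$ with one of the listed cuspidal parameters. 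One sets $\Scl(\phi,\eta):=[\widehat L,\varphi,\varepsilon]\in\Omega_e^{\st}(G)$. The advertised identity is then immediate from the definition of the infinitesimal character: $\lambda_\varphi(w)=\varphi(w)\,\nu_v(|w|^{1/2})=\phi(w)\,\nu_u(|w|^{1/2})=\lambda_\phi(w)$.

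That $\Scl$ is well defined follows from the canonicity of $\Psi_H$ and of the bijection $\Psi_H^{-1}(\mathfrak t)\leftrightarrow\Irr(N_H(\mathcal L)/\mathcal L)$, from the conjugacy (under centralizers) of $\mathfrak{sl}_2$-triples, and from the fact that only the $\widehat G$-conjugacy class of $\phi$ entered — replacing $A$ by another torus with the same centralizer, or conjugating, only moves $(\widehat L,\varphi,\varepsilon)$ inside its $\sim_{\Omega_e}$-class. Surjectivity is obtained by running the construction backwards: given $[\widehat L,\varphi,\varepsilon]\in\Omega_e^{\st}(G)$ with $(\varphi,\varepsilon)$ cuspidal, one enlarges $\restriction{\varphi}{W_F}$ along a staircase as in Proposition~\ref{proparamcusp} to a parameter whose $W_F$-centralizer $H$ contains $\mathcal L=H_\varphi^L$ as a quasi-Levi $Z_H(A)$ with $\widehat L=Z_{\widehat G}(A)$, so that $(\mathcal C^{\mathcal L}_{u_\varphi},\varepsilon)$ becomes part of a cuspidal triple of $H$; surjectivity of $\Psi_H$ and the parametrization of its fibres then produce an enhanced parameter of $G$ mapping to $[\widehat L,\varphi,\varepsilon]$, the correcting cocharacter being inverted so that the infinitesimal character equals the prescribed $\lambda_\varphi$. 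In particular $\Scl$ restricted to $\Phi_e(G)_{\cusp}$ is $(\phi,\eta)\mapsto[\widehat G,\phi,\eta]$.

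The main obstacle is the centrality claim of the second paragraph — that $\nu_u\nu_v^{-1}$ lands in $Z_{\widehat L}^\circ$. This is precisely the statement that Lusztig's generalized Springer correspondence for the classical centralizer groups $H$ is compatible with infinitesimal characters, and I would prove it type by type: for an $\Sp$- or $\SO$-block it reduces to the combinatorial identity that the multiset of $\mathfrak{sl}_2$-weights of the Jordan type of $u$ coincides with that of the Jordan type of $v$, the latter being a union of singletons (from the $\GL$- and $\GL_1$-blocks of $\mathcal L$) together with the triangular partition $(2,4,\dots,2d)$ or $(1,3,\dots,2d-1)$ of the genuinely cuspidal block, with the analogous statement for $\GL$-blocks; this identity is read off from the explicit description of cuspidal representations and of the generalized Springer correspondence (the $u$-symbol combinatorics) recalled above. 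Once it is in hand, everything else — well-definedness, surjectivity, and $\lambda_\phi=\lambda_\varphi$ — is bookkeeping around the two already-available ingredients: the generalized Springer correspondence for disconnected orthogonal-type groups and the classification of cuspidal enhanced parameters in Proposition~\ref{proparamcusp}.
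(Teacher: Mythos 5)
Your construction is the same as the paper's: apply the generalized Springer correspondence (in its quasi-Levi form for disconnected orthogonal-type groups) to $H_\phi^G$ and $(u_\phi,\widetilde\eta)$, set $\widehat L=Z_{\widehat G}(Z_{\mathcal L}^\circ)$, and build $\varphi$ by keeping the infinitesimal character $\lambda_\phi$ fixed while replacing the $\SL_2$-part by the one attached to the cuspidal orbit; your formula $\varphi(w)=\phi(w)\nu_u(|w|^{1/2})\nu_v(|w|^{1/2})^{-1}$ is exactly the paper's $\varphi(w,x)=\lambda(w)\chi_\varphi(w)\gamma_\varphi(x)$, and the identity $\lambda_\varphi=\lambda_\phi$ falls out the same way. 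You also correctly isolate the one nontrivial verification.

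However, your proposed reduction of that verification is wrong as stated. You claim it reduces to ``the multiset of $\mathfrak{sl}_2$-weights of the Jordan type of $u$ coincides with that of the Jordan type of $v$.'' That identity is false, and indeed it \emph{must} be false whenever the correcting cocharacter is nontrivial: already for the Steinberg-type situation in $\Sp_6$ with $u=(6)$ and $v$ the cuspidal unipotent of $\mathcal L=\GL_1^2\times\Sp_2$, the weight multisets are $\{\pm5,\pm3,\pm1\}$ versus $\{\pm1,0,0,0,0\}$. What actually has to be shown is not that the weights agree but that their \emph{difference} is constant on each $\restriction{\phi}{W_F}$-isotypic block, i.e.\ that $\nu_u\nu_v^{-1}$ acts by a single scalar on each such block and hence lies in $Z_{\widehat L}^\circ$ --- equivalently, the paper's formulation $\Ad(\lambda_\phi(w))N_\varphi=|w|N_\varphi$, which it obtains as an adaptation of a result of Lusztig rather than by a weight count. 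In the example above one checks $\nu_u\nu_v^{-1}=\diag(z^5,z^3,1,1,z^{-3},z^{-5})\in Z_{\widehat L}^\circ$ even though the weights differ. So the architecture of your proof is sound and matches the paper, but the key lemma as you have reduced it would fail if attacked head-on; it needs to be restated in the centrality/equivariance form before the type-by-type verification can go through. (Minor additional point: you should also record that the cuspidal orbits are distinguished, which is what makes $\varphi$ discrete and hence eligible to be cuspidal in the sense of Definition~\ref{defcuspS}; the paper says this explicitly.)
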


\begin{proof}
Here we give a sketch of the proof. Full details are available in {\cite[3.20]{Moussaoui:2015}}.

Recall the relation between the Langlands parameter in term of the Weil-Deligne group $W_F'$ and of the original Weil-Deligne group $WD_F=W_F \rtimes \C$.
A Langlands parameter for $G$ using the original Weil-Deligne group is a pair $(\lambda,N)$ with $\lambda : W_F \longrightarrow \widehat{G}$ an admissible morphism and $N\in \widehat{\mathfrak{g}}$ such that $$\forall w \in W_F, \,\, \Ad(\lambda(w))N=|w| N.$$
To $\phi : W_F' \longrightarrow \widehat{G}$ one can associate a pair $(\lambda,N)$ by $$\phi \longmapsto (\lambda_{\phi},N_{\phi}), \,\, \forall w \in W_F, \, \lambda_{\phi}=\phi(w,d_w) ,\,\, N_{\phi}=d \restriction{\phi}{\SL_2(\C)} \begin{pmatrix} 0 & 1 \\ 0 & 0\end{pmatrix}.$$ 
In the other direction, if $(\lambda,N)$ is fixed, by the Jacobson-Morozov-Kostant theorem, there exists a map $\gamma : \SL_2(\C) \longrightarrow \widehat{G}$ such that the differential of $\gamma$ sends $(\begin{smallmatrix} 0 & 1 \\ 0 & 0\end{smallmatrix})$ to $N$ and for all $t \in \C^{\times}$ and $\gamma(\diag(t,t^{-1}))$ commutes with the image of $\lambda$. Then, if we define for all $w \in W_F$, $\chi_{\phi}$ by $\chi_{\phi}(w)=\gamma(d_w)^{-1}$ then we set $$\phi(w,x)=\lambda(w)\chi_{\phi}(d_w)\gamma(x).$$

Now we need a construction which involves the Springer correspondence.
We apply the Springer correspondence for the group $H_{\phi}^{G}=Z_{\widehat{G}}(\restriction{\phi}{W_F})$, the unipotent class of $u_{\phi}=\phi\left(1,(\begin{smallmatrix} 1 & 1 \\ 0 & 1\end{smallmatrix}) \right) $, or more precisely to the nilpotent class of $N_{\phi}=d \restriction{\phi}{\SL_2(\C)} (\begin{smallmatrix} 0 & 1 \\ 0 & 0\end{smallmatrix})$ and the irreducible representation $\widetilde{\eta}$ of $A_{H_{\phi}^{G}}(u_{\phi})$. This defines a quasi-Levi subgroup $H'$ of $H_{\phi}^{G}$ and a nilpotent $N_{\varphi}$ element of the Lie algebra of $H'$. 

Remember that we want to define a cuspidal triple $(\widehat{L},\varphi,\varepsilon) \in \Omega_{e}^{\st}(G)$ such that $\lambda_{\varphi}=\lambda_{\phi}$.
Let $A=Z_{H'}^{\circ}$ be the identity component of the centre of $H'$ and let $\widehat{L}=Z_{\widehat{G}}(A)$. Then $\widehat{L}$ is a Levi subgroup of $\widehat{G}$. Since we have fixed $\lambda$ and we have obtained a nilpotent element $N_{\varphi}$, we have to check if this defines a Langlands parameter. By an adaptation of a result of Lusztig, for all $w \in W_F, \,\, \Ad(\lambda(w))N_{\varphi}=|w|N_{\varphi}$. Then we can define $\varphi : W_F' \longrightarrow \widehat{L}$ for all $(w,x) \in W_F'$ by $\varphi(w,x)=\lambda(w)\chi_{\varphi}(w) \gamma_{\varphi}(x)$. The nilpotent orbits which carry cuspidal local systems are distinguished. Hence $\varphi$ is a discrete parameter of $L$. It is automatically cuspidal because the Springer correspondence associates to $\varphi$ a cuspidal representation of $A_{\widehat{L}}(\varphi)$. 
\end{proof}

With reference to the proof above, note that, for all $w \in W_F$, $$\phi(w,1)=\lambda(w) \chi_{\phi}(w) \,\, \text{and} \,\, \varphi(w,1)=\lambda(w) \chi_{\varphi}(w).$$ Hence, $$\restriction{\phi}{W_F}=\restriction{\varphi}{W_F} \chi_{c}, \,\, \chi_c=\chi_{\phi}/\chi_{\varphi}.$$ We call $\chi_c$ a \emph{correcting cocharacter} of $\varphi$ in $\widehat{G}$.
This notion is treated with more detail in \cite[3.16,3.17]{Moussaoui:2015}.

The following proposition described the fibers of the map $
\Scl : \Phi_{e}(G) \to \Omega_{e}^{\st}(G)$ appearing in Theorem~\ref{theoremesupportcuspidal}. For $a,b \in \Z$, such that $a\leqslant b$, we denote by $\llbracket a,b \rrbracket$ the set of integers between $a$ and $b$.

\begin{prop}\label{propfiber}
Let $(\widehat{L},\varphi,\varepsilon) \in \Omega_{e}^{\st}(G)$ and $\chi_{c_1},\ldots,\chi_{c_r}$ be the correcting cocharacters of $\varphi$ in $\widehat{G}$. For each $i \in \llbracket 1,r \rrbracket$, we can consider the group $H_{\varphi \chi_{c_i}}^{G}=Z_{\widehat{G}}(\restriction{\varphi}{W_F} \chi_{c_i})$, its quasi-Levi subgroup $H_{\varphi \chi_{c_i}}^{L}=Z_{\widehat{L}}(\restriction{\varphi}{W_F} \chi_{c_i})$ and the relative Weyl group $$W_{H_{\varphi \chi_{c_i}}^{L}}^{H_{\varphi \chi_{c_i}}^{G}}=N_{H_{\varphi \chi_{c_i}}^{G}}(H_{\varphi \chi_{c_i}}^{L})/H_{\varphi \chi_{c_i}}^{L}.$$ The fiber of $\Scl $ above $(\widehat{L},\varphi,\varepsilon)$ is parametrized by the irreducible representations of $\Irr(W_{H_{\varphi \chi_{c_i}}^{L}}^{H_{\varphi \chi_{c_i}}^{G}})$ with $i \in \llbracket 1,r \rrbracket$ such that the parameter $\phi$ constructed as above satisfies $\chi_{c_i}=\chi_{\phi}/\chi_{\varphi}$.
\end{prop}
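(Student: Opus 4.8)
The plan is to unwind the definition of the cuspidal support map $\Scl$ from the proof of Theorem~\ref{theoremesupportcuspidal} and to organize the fiber over $(\widehat{L},\varphi,\varepsilon)$ according to the $\widehat{G}$-conjugacy class of the restriction $\restriction{\phi}{W_F}$. First I would fix a representative $(\widehat{L},\varphi,\varepsilon)$ of its class in $\Omega_e^{\st}(G)$, with common infinitesimal character $\lambda=\lambda_\varphi$, distinguished nilpotent $N_\varphi$, and an $\SL_2$-triple $\gamma_\varphi$, so that $\varphi(w,x)=\lambda(w)\chi_\varphi(w)\gamma_\varphi(x)$ with $\chi_\varphi(w)=\gamma_\varphi(d_w)^{-1}$. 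If $(\phi,\eta)$ belongs to the fiber then $\lambda_\phi=\lambda_\varphi=\lambda$ by Theorem~\ref{theoremesupportcuspidal}; writing $\phi$ in the analogous form and invoking the discussion of correcting cocharacters following that theorem together with \cite[3.16,3.17]{Moussaoui:2015}, after replacing $\phi$ by a $\widehat{G}$-conjugate one may assume $\restriction{\phi}{W_F}=\restriction{\varphi}{W_F}\chi_{c_i}=:M_i$ for exactly one $i$, and then the equality $\lambda_\phi=\lambda$ forces $\chi_{c_i}=\chi_\phi/\chi_\varphi$. This realizes the fiber as a disjoint union $\bigsqcup_i F_i$, where $F_i$ gathers the classes with $\restriction{\phi}{W_F}=M_i$, and $F_i=\varnothing$ unless $i$ satisfies the stated condition.

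Next I would fix such an $i$ and set $H_i:=Z_{\widehat{G}}(M_i)=H^G_{\varphi\chi_{c_i}}$ and $H_i':=Z_{\widehat{L}}(M_i)=H^L_{\varphi\chi_{c_i}}$. Since $W_F'=W_F\times\SL_2(\C)$, for any $\phi$ with $\restriction{\phi}{W_F}=M_i$ the image of $\restriction{\phi}{\SL_2}$ commutes with $M_i$, hence lies in $H_i$; conversely such a $\phi$ is determined by $M_i$ together with $\restriction{\phi}{\SL_2}:\SL_2(\C)\to H_i$, that is, by the unipotent class $\mathcal{C}^{H_i}_{u_\phi}$ and, via the Jacobson--Morozov--Kostant theorem, by an $\SL_2$-triple through it; imposing $\lambda_\phi=\lambda$ pins the triple down, and this is solvable exactly when $i$ satisfies the stated condition. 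Two such $\phi$ are $\widehat{G}$-conjugate iff they are $H_i$-conjugate (a conjugating element must normalize $M_i$), and an enhancement $\eta$ of $\phi$ corresponds to $\widetilde{\eta}\in\Irr(A_{H_i}(u_\phi))$, the pullback along $A_{H_i}(u_\phi)=A_{\widehat{G}}(\phi)\twoheadrightarrow\mathcal{S}_\phi^G$. Thus $F_i$ is identified with an $H_i$-stable subset of $\mathcal{U}^e_{H_i}$.

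The main step is then to identify this subset. Because $G$ is a split classical group, $H_i=Z_{\widehat{G}}(M_i)$ is of the type treated in Section~\ref{sec:Springer}, so the generalized Springer correspondence of \cite[A4,A8]{Moussaoui:2015} applies to it. Unwinding the construction of $\Scl$, the datum $(\mathcal{C}^{H_i}_{u_\phi},\widetilde{\eta})$ is sent by $\Psi_{H_i}$ to a cuspidal triple $\mathfrak{t}=[H'',\mathcal{C}^{H''}_{v},\varepsilon'']$, and $\Scl(\phi,\eta)=(Z_{\widehat{G}}(Z^\circ_{H''}),\varphi'',\varepsilon'')$ where $\varphi''$ is built from $\lambda$ and $v$; a routine centralizer computation (using that $\widehat{L}$ is a Levi subgroup of $\widehat{G}$) shows that $Z_{\widehat{G}}(Z^\circ_{H''})=\widehat{L}$ is equivalent to $H''=H_i'$, and then, since $v$ is recovered from $\varphi''$ as $d\restriction{\varphi''}{\SL_2}(\begin{smallmatrix}0&1\\0&0\end{smallmatrix})$, the equality $\varphi''=\varphi$ is equivalent to $v$ lying in the class $\mathcal{C}^{H_i'}_{N_\varphi}$. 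Hence $\Scl(\phi,\eta)=(\widehat{L},\varphi,\varepsilon)$ holds (for the chosen representative) iff $\Psi_{H_i}(\mathcal{C}^{H_i}_{u_\phi},\widetilde{\eta})=\mathfrak{t}_i:=[H_i',\mathcal{C}^{H_i'}_{N_\varphi},\varepsilon]$, i.e. $F_i=\Psi_{H_i}^{-1}(\mathfrak{t}_i)=\mathcal{M}_{\mathfrak{t}_i}$. By the generalized Springer correspondence for $H_i$ this block is in natural bijection with $\Irr\big(N_{H_i}(H_i')/H_i'\big)=\Irr\big(W^{H^G_{\varphi\chi_{c_i}}}_{H^L_{\varphi\chi_{c_i}}}\big)$, and one checks in the reverse direction that the parameter $\phi$ reconstructed from a class in $\mathcal{M}_{\mathfrak{t}_i}$ indeed satisfies $\restriction{\phi}{W_F}=M_i$ and $\chi_{c_i}=\chi_\phi/\chi_\varphi$. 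Taking the disjoint union over the relevant $i$ yields the asserted parametrization.

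The main obstacle is the compatibility controlling the condition $\chi_{c_i}=\chi_\phi/\chi_\varphi$: given a nilpotent in the block $\mathcal{M}_{\mathfrak{t}_i}$ of $H_i$, one must produce an $\SL_2$-triple through it whose value at $d_w$ realizes $(\chi_\varphi\chi_{c_i})(w)^{-1}$, and conversely show that this forces $(\phi,\eta)$ to lie in $F_i$; together with the book-keeping ensuring that the classes $M_i=\restriction{\varphi}{W_F}\chi_{c_i}$ are pairwise non-conjugate, so that $\bigsqcup_i F_i$ is genuinely disjoint, and that the enhancements on the two sides match up. Both the $\SL_2$-triple compatibility and this book-keeping are supplied by the adaptation of Lusztig's results recorded in \cite[3.16--3.20]{Moussaoui:2015}, and it is precisely at this point that the hypothesis that $G$ is a split classical group is used, so that $H_i$ falls within the scope of Section~\ref{sec:Springer}.
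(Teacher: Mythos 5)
Your proposal is correct and follows essentially the same route as the paper: stratify the fiber according to which correcting cocharacter $\chi_{c_i}$ twists $\restriction{\varphi}{W_F}$ into $\restriction{\phi}{W_F}$, then identify each stratum with the block of the generalized Springer correspondence for $H^{G}_{\varphi\chi_{c_i}}$ over the cuspidal triple determined by $(H^{L}_{\varphi\chi_{c_i}},N_\varphi,\varepsilon)$, which Lusztig's parametrization puts in bijection with $\Irr(W^{H^{G}_{\varphi\chi_{c_i}}}_{H^{L}_{\varphi\chi_{c_i}}})$. Your write-up is somewhat more explicit than the paper's (notably on the disjointness of the strata and the reduction of $\widehat{G}$-conjugacy to $H_i$-conjugacy), but the underlying argument is the same.
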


\begin{proof}
In the proof of Theorem~\ref{theoremesupportcuspidal}, there is an additional object which is needed to characterize $(\phi,\eta)$: the irreducible representation $\rho \in \Irr(W_{H_{\phi}^{L}}^{H_{\phi}^G})$ given by the Springer correspondence. Now we see that if $(\phi,\eta) \in \Phi_e(G)$ has cuspidal support $(\widehat{L},\varphi,\varepsilon)$ then necessarily $\restriction{\phi}{W_F}=\restriction{\varphi}{W_F} \chi_c$ with $\chi_c$ a correcting cocharacter. The set of correcting cocharacters of $\varphi$ in $\widehat{G}$ is finite (this can be deduced from \cite[5.4.c]{Kazhdan:1987}). Let $\chi_{c_1}, \ldots, \chi_{c_r}$ be the correcting cocharacters of $\varphi$ in $\widehat{G}$ and for all $i \in \llbracket 1,r \rrbracket$, let $\mu_i=\restriction{\varphi}{W_F} \chi_{c_i}$. Let $i \in \llbracket 1,r \rrbracket$ and consider an irreducible representation $\rho \in \Irr( W_{H_{\mu_i}^{L}}^{H_{\mu_i}^{G}})$. By the Springer correspondence for the group $H_{\mu_i}^{G}$, to $\rho$ is associated a unipotent element $u_{\mu_i,\rho} \in H_{\mu_i}^{G}$ or, equivalently, a morphism $\gamma_{(\mu_i,\rho)} : \SL_2(\C) \longrightarrow \left(H_{\mu_i}^{G}\right)^{\circ}$ and an irreducible representation $\eta$ of $A_{H_{\mu_i}^{G}}(\gamma_{(\mu_i,\rho)})$. Define $\phi_{(\mu_i,\rho)}=\mu_{i} \gamma_{(\mu_i,\rho)} : W_F' \longrightarrow \widehat{G}$. We can assume after conjugation that $\phi_{(\mu_i,\rho)}$ is adapted to $\varphi$ (see \cite[3.16]{Moussaoui:2015}). Now we apply the previous construction to see that $(\phi_{(\mu_i,\rho)},\eta) \in \Phi_e(G)$ is associated to $(\widehat{L},\varphi,\varepsilon)$ if and only if $\lambda_{\phi_{(\mu_i,\rho)}}=\lambda_{\varphi}$; in other words if and only if $\chi_{c_i}=\chi_{\phi_{(\mu_i,\rho)}}/\chi_{\varphi}$. 
\end{proof}

We saw at the beginning of Section~\ref{sectioncuspidallanglandsparameter} that we wanted to compare the two decomposition : $$\Irr(G)=\bigsqcup_{\mathfrak{s} \in \mathcal{B}(G)} \Irr(G)_{\mathfrak{s}} = \bigsqcup_{\phi \in \Phi(G)} \Pi_{\phi}(G).$$
For $\mathfrak{s}=[L,\sigma] \in \mathcal{B}(G)$, let $\Sil(\mathfrak{s})$ be the inertial pair $[\widehat{M}_{\lambda_{\varphi_\sigma}},\lambda_{\varphi_\sigma}] \in \mathcal{B}_{e}^{\st}(G)$, where $\varphi_\sigma : W_F' \longrightarrow \Phi(L)$ is the Langlands parameter of $\sigma$ and $\widehat{M}_{\lambda_{\varphi_\sigma}}$ is a Levi subgroup of $\widehat{G}$ which contains minimally the image of $\lambda_{\varphi_\sigma}$. We remark that if $\restriction{\varphi}{\SL_2} \neq 1$ then $L$ is not the dual of $\widehat{M}_{\lambda_{\varphi_\sigma}}$. We have proved the following : 

\begin{theorem}
Let $\wi=[\widehat{M},\lambda] \in \mathcal{B}^{\st}(G)$. Then we have : $$\Pi_{\wi}^{+}(G)=\bigsqcup_{\substack{\mathfrak{s} \in \mathcal{B}(G) \\ \Sil(\mathfrak{s})=\wi}} \Irr(G)_{s}.$$
\end{theorem}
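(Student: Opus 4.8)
The plan is to unwind the definitions on both sides and match them up, using the cuspidal support constructions already established. Recall that $\Pi_{\wi}^{+}(G)$ is defined as the disjoint union of $L$-packets $\Pi_{\phi}(G)$ over those $\phi \in \Phi(G)$ whose infinitesimal character $\lambda_{\phi}$ is of the form $\lambda\chi$ for some $\chi \in \mathcal{X}(\widehat{M})$, i.e. over the $\phi$ lying above the inertial class $\wi = [\widehat{M},\lambda]$. By the local Langlands correspondence $\Irr(G) \longleftrightarrow \Phi_e(G)$, each such $L$-packet is a subset of $\Irr(G)$, so the left-hand side is a subset of $\Irr(G)$; the right-hand side is manifestly a subset of $\Irr(G)$ too. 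So it suffices to show that an irreducible representation $\pi$ with enhanced parameter $(\phi,\eta)$ lies in $\Pi_{\wi}^{+}(G)$ if and only if its Bernstein component $\mathfrak{s} = [L,\sigma] = \Sc(\pi)$ satisfies $\Sil(\mathfrak{s}) = \wi$.

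First I would fix $\pi \in \Irr(G)$ and let $(\phi,\eta) = \rec_G^e(\pi)$, and let $\mathfrak{s} = [L,\sigma]$ be its cuspidal support, so $\varphi_\sigma$ is the Langlands parameter of the supercuspidal $\sigma$. The crucial input is Conjecture~\ref{conjindLLC}, known for split classical groups by \cite[4.7]{Moussaoui:2015}, which gives $(\lambda_{\phi_\sigma})_{\widehat{G}} = (\lambda_{\phi})_{\widehat{G}}$; more precisely, via the commutative square relating $\Sc$, $\rec_G^e$, $\rec_{\Omega(G)}^e$ and the Galois-side cuspidal support map $\Scl$, one has that the image $\Scl(\phi,\eta) = (\widehat{L},\varphi,\varepsilon)$ corresponds under the bijection $\Omega(G) \longleftrightarrow \Omega_e^{\st}(G)$ to $\mathfrak{s}$, and by Theorem~\ref{theoremesupportcuspidal} the infinitesimal characters agree: $\lambda_\phi = \lambda_\varphi$. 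Since $\varphi$ is the cuspidal parameter attached to $(\widehat{L},\varphi,\varepsilon)$ and $\sigma$ has parameter $\varphi_\sigma$ with $\lambda_{\varphi_\sigma}$ equal (up to $\widehat{G}$-conjugacy) to $\lambda_\varphi = \lambda_\phi$, the Levi subgroup $\widehat{M}_{\lambda_{\varphi_\sigma}}$ minimally containing the image of $\lambda_{\varphi_\sigma}$ is $\widehat{G}$-conjugate to a Levi minimally containing the image of $\lambda_\phi$.

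Now I would translate the condition ``$\phi$ lies over the inertial class $\wi = [\widehat{M},\lambda]$'' into the language of infinitesimal characters: by the definition of $\Pi_{\wi}^{+}(G)$ and of $\mathcal{T}_{\wi}/\mathcal{W}_{\wi}$, this says exactly that $\lambda_\phi$ is $\widehat{G}$-conjugate to $\lambda\chi$ for some unramified $\chi \in \mathcal{X}(\widehat{M})$, which — since $\lambda$ is a discrete parameter of $M$ and $\widehat{M}$ minimally contains its image — is equivalent to saying that the pair $(\widehat{M}_{\lambda_\phi}, \lambda_\phi)$ lies in the inertial class $\wi$, i.e. $[\widehat{M}_{\lambda_\phi}, \lambda_\phi] = \wi$ in $\mathcal{B}^{\st}(G)$. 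On the other hand, $\Sil(\mathfrak{s}) = [\widehat{M}_{\lambda_{\varphi_\sigma}}, \lambda_{\varphi_\sigma}]$ by definition, and by the previous paragraph $\lambda_{\varphi_\sigma}$ is $\widehat{G}$-conjugate to $\lambda_\phi$, so $\Sil(\mathfrak{s}) = [\widehat{M}_{\lambda_\phi}, \lambda_\phi]$ in $\mathcal{B}^{\st}(G)$. Combining the two, $\pi$ contributes to $\Pi_{\wi}^{+}(G)$ if and only if $\Sil(\mathfrak{s}) = \wi$, which is precisely the claimed disjoint-union decomposition (disjointness being inherited from the disjointness of the Bernstein decomposition together with the well-definedness of $\Sil$ on inertial classes).

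The main obstacle I expect is bookkeeping the several bijections and conjugacy classes consistently — in particular verifying that ``the infinitesimal character of the parameter of the cuspidal support $\sigma$'' really coincides ($\widehat{G}$-conjugately) with ``the infinitesimal character of the parameter of $\pi$''. This is exactly the content that Conjecture~\ref{conjindLLC} supplies, combined with Theorem~\ref{theoremesupportcuspidal}'s property $\lambda_\phi = \lambda_\varphi$ and the compatibility square tying the representation-theoretic cuspidal support $\Sc$ to the Galois-side $\Scl$; once these are invoked, the equivalence is a formal comparison of inertial classes, so I would present it as such rather than re-deriving the input. A minor additional point to check is that the relation $\sim_{\mathcal{B}}$ on discrete parameters $\lambda$ (conjugacy up to $\widehat{G}$ and an unramified twist of $\widehat{M}$) matches the condition ``$\lambda_\phi = \lambda\chi$ up to $\widehat{G}$-conjugacy'' used in the definition of the inertial packet $\Pi_{\wi}^{+}(G)$, which is immediate from the definition of $\mathcal{T}_{\wi}/\mathcal{W}_{\wi}$.
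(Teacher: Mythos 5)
Your proposal is correct and follows exactly the route the paper intends: the theorem is stated there without an explicit proof (``We have proved the following'') precisely because it is the formal consequence of Conjecture~\ref{conjindLLC} for split classical groups (\cite[4.7]{Moussaoui:2015}) together with Theorem~\ref{theoremesupportcuspidal} and the definitions of $\Pi_{\wi}^{+}(G)$ and $\Sil$, which is what you assemble. Your added care about the minimal Levi $\widehat{M}_{\lambda_\phi}$ being unchanged under unramified twists is a worthwhile detail the paper leaves implicit.
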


This motivates the following conjecture.

\begin{conj}
Let $G$ be a reductive connected split group over $F$. Let $\wi=[\widehat{M},\lambda] \in \mathcal{B}^{\st}(G)$. Then, we have : $$\Pi_{\wi}^{+}(G)=\bigsqcup_{\substack{\mathfrak{s} \in \mathcal{B}(G) \\ \Sil(\mathfrak{s})=\wi}} \Irr(G)_{s}.$$
\end{conj}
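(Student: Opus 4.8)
The statement in question is the last Theorem in the excerpt, which asserts that for $\wi=[\widehat M,\lambda]\in\mathcal B^{\st}(G)$ one has
\[
\Pi_{\wi}^{+}(G)=\bigsqcup_{\substack{\mathfrak s\in\mathcal B(G)\\ \Sil(\mathfrak s)=\wi}}\Irr(G)_{\mathfrak s}.
\]
The plan is to prove this by a double inclusion, unwinding the definitions of both sides and using the compatibility of parabolic induction with the local Langlands correspondence (Conjecture~\ref{conjindLLC}, which is known for split classical groups by \cite[4.7]{Moussaoui:2015}) together with the cuspidal support map $\Scl$ of Theorem~\ref{theoremesupportcuspidal}. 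First I would note that both sides are subsets of $\Irr(G)$: the right side manifestly, and the left side because $\Pi_{\wi}^{+}(G)=\bigsqcup_{\lambda_{\phi}=\lambda\chi,\ \chi\in\mathcal X(\widehat M)}\Pi_{\phi}(G)$ is a disjoint union of $L$-packets, hence a set of irreducible representations of $G$. So it suffices to show that an irreducible $\pi$ lies in $\Pi_{\wi}^{+}(G)$ if and only if $\Sil(\mathfrak s_{\pi})=\wi$, where $\mathfrak s_{\pi}=[L,\sigma]\in\mathcal B(G)$ is the Bernstein component of $\pi$.

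The key computation is to identify $\Sil(\mathfrak s_{\pi})$ with the inertial class of the infinitesimal character $\lambda_{\phi_{\pi}}$. By definition $\Sil(\mathfrak s_{\pi})=[\widehat M_{\lambda_{\varphi_{\sigma}}},\lambda_{\varphi_{\sigma}}]$, where $\varphi_{\sigma}$ is the Langlands parameter of the supercuspidal $\sigma$ of $L$ and $\widehat M_{\lambda_{\varphi_{\sigma}}}$ is a minimal Levi of $\widehat G$ containing the image of $\lambda_{\varphi_{\sigma}}$. On the other hand, since $\pi$ is a subquotient of $i_P^G(\sigma)$, Conjecture~\ref{conjindLLC} (a theorem here) gives $(\lambda_{\phi_{\pi}})_{\widehat G}=(\lambda_{\phi_{\sigma}})_{\widehat G}$, viewing $\phi_{\sigma}$ inside $\widehat G$ via $\widehat L\hookrightarrow\widehat G$. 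Because $\lambda_{\varphi_{\sigma}}$ is, by construction, a discrete parameter of the Levi $M$ dual to $\widehat M_{\lambda_{\varphi_{\sigma}}}$ and factors through the minimal such Levi, the $\widehat G$-conjugacy class of $\lambda_{\phi_{\pi}}$ determines, and is determined by, the inertial class $[\widehat M_{\lambda_{\varphi_{\sigma}}},\lambda_{\varphi_{\sigma}}]$ up to the twisting by $\mathcal X(\widehat M)$ built into $\sim_{\mathcal B}$ — this is precisely the content of the decomposition $\Omega^{\st}(G)=\bigsqcup_{\wi}\mathcal T_{\wi}/\mathcal W_{\wi}$ recalled from \cite[5.3.3]{Haines:2013aa}. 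Therefore $\Sil(\mathfrak s_{\pi})=\wi$ holds exactly when $\lambda_{\phi_{\pi}}\in\mathcal T_{\wi}$, i.e. when $\lambda_{\phi_{\pi}}=\lambda\chi$ for some $\chi\in\mathcal X(\widehat M)$, which is exactly the condition that $\pi\in\Pi_{\wi}^{+}(G)$ by the definition of the inertial packet.

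For the forward inclusion one runs this argument in reverse: if $\pi\in\Pi_{\wi}^{+}(G)$ then $\lambda_{\phi_{\pi}}=\lambda\chi$, and applying $\Scl$ to the enhanced parameter $\rec_G^e(\pi)$ together with the commutative square relating $\Sc$, $\rec_G^e$, $\rec_{\Omega(G)}^e$ and $\Scl$ identifies the Bernstein inertial class of $\pi$ with one whose $\Sil$ image is $\wi$; here one also uses that the local Langlands correspondence for supercuspidals of Levi subgroups of $G$ (Proposition~\ref{proparamcusp} and Conjecture~\ref{conjecturecusp}, known for split classical groups) makes $\rec_{\Omega(G)}^e$ a bijection, so that the inertial class is recovered unambiguously. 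The main obstacle, and the step deserving the most care, is the bookkeeping around which Levi subgroup of $\widehat G$ is ``minimal'' for the image of $\lambda_{\varphi_{\sigma}}$ versus which $\widehat L$ carries the discrete cuspidal parameter: when $\restriction{\varphi}{\SL_2}\neq 1$ the group $L$ is \emph{not} the dual of $\widehat M_{\lambda_{\varphi_{\sigma}}}$ (as the remark just before the theorem warns), so one must carefully track the passage between the Weil-Deligne-group picture and the Weil-group picture, using the correcting cocharacters $\chi_c=\chi_{\phi}/\chi_{\varphi}$ from the discussion after Theorem~\ref{theoremesupportcuspidal} to see that this discrepancy disappears at the level of infinitesimal characters and hence does not affect the inertial class. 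Once this identification is made, the disjointness of the decomposition on the right follows from the disjointness of the Bernstein decomposition, and the equality of the two sides is immediate.
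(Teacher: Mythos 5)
You have misidentified the statement: what is at issue is the \emph{Conjecture} immediately following the theorem, stated for an arbitrary reductive connected split group $G$ over $F$ --- not the preceding Theorem, which the paper asserts only in the split classical setting (``We have proved the following'' refers to the context of Theorem~\ref{theoremesupportcuspidal} and \cite{Moussaoui:2015}). The paper offers no proof of this Conjecture; it is introduced precisely with the words ``This motivates the following conjecture.'' Your argument does not prove it either, because every ingredient you invoke is only available for $\GL_n$ and the split classical groups: the existence of a Langlands parameter $\phi_\pi$ for an arbitrary irreducible $\pi$ is itself conjectural in general; the compatibility of parabolic induction with the correspondence (Conjecture~\ref{conjindLLC}) is proved only for $\GL_n$ and split classical groups; the identification of supercuspidals with cuspidal enhanced parameters (Conjecture~\ref{conjecturecusp} via Proposition~\ref{proparamcusp}) is known only in those same cases; and the cuspidal support map $\Scl$ of Theorem~\ref{theoremesupportcuspidal} is constructed only for $\Sp_{2n}(F)$ and $\SO_n(F)$. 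Treating these as theorems for a general split reductive $G$ makes your argument circular with respect to the open problems that are the reason the statement is a conjecture in the first place. That is a genuine gap, not a bookkeeping issue.

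Restricted to split classical groups, your double inclusion --- identifying $\Sil(\mathfrak s_\pi)$ with the inertial class of the infinitesimal character $\lambda_{\phi_\pi}$ via Conjecture~\ref{conjindLLC} and the decomposition $\Omega^{\st}(G)=\bigsqcup_{\wi}\mathcal T_{\wi}/\mathcal W_{\wi}$, with attention to the fact that $\widehat L$ is not dual to $\widehat M_{\lambda_{\varphi_\sigma}}$ when $\restriction{\varphi}{\SL_2}\neq 1$ --- is a reasonable sketch of the \emph{preceding Theorem}, and the points you flag as delicate are the right ones there. But that establishes the Theorem, not the Conjecture. The correct answer for the statement as posed is that it cannot currently be proved unconditionally; at best one can show it is implied by the local Langlands correspondence for $G$ together with Conjecture~\ref{conjindLLC} and a suitable cuspidality/cuspidal-support formalism, and you should say so explicitly rather than presenting the argument as a proof.
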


\section{Aubert-Baum-Plymen-Solleveld conjecture for split classical groups}

\subsection{Aubert-Baum-Plymen-Solleveld conjecture}\label{sec:ABPSconj}

In this section we review the Aubert-Baum-Plymen-Solleveld conjecture as is stated in \cite[15]{Aubert:2014ac}. Let begin with the definitions of the so-called "extended quotient". Let $T$ be a complex affine variety and $\Gamma$ be a finite group acting on $T$ as automorphisms of affine variety. For all $t \in T$, let $\Gamma_t =\{ \gamma \in \Gamma \mid \gamma \cdot t = t \}$ be the stabilizer of $t$ in $\Gamma$. 
Consider $$X=\{(t,\gamma) \in T \times \Gamma \mid \gamma \cdot t=t\} \quad \text{and} \quad Y=\{(t,\rho) \mid t \in T, \rho \in \Irr(\Gamma_t)\}.$$ The group $\Gamma$ acts on $X$ and $Y$ by : $$\alpha \cdot (t,\gamma)=(\alpha \cdot t, \alpha \gamma\alpha^{-1}), \quad \text{and} \quad \alpha \cdot (t,\rho) =(\alpha \cdot t, \alpha^* \rho), \,\, \alpha \in \Gamma, \, (t,\rho) \in Y,$$ where $\alpha^* \rho \in \Irr(\Gamma_{\alpha \cdot t})$ is defined by, $(\alpha^* \rho)(\gamma)=\rho(\alpha \gamma \alpha^{-1})$, for all $\gamma \in \Gamma_{\alpha \cdot t}$. Remark that $X$ has a natural structure of affine variety whereas $Y$ does not admit a natural structure of algebraic variety. In the following we recall the definitions of the extended quotient as is stated in \cite[11,13]{Aubert:2014ac} but we give a different names.
\begin{definition}[{\cite[11,13]{Aubert:2014ac}}]
The \emph{geometric extended quotient of $T$ by $\Gamma$} is the quotient $X/\Gamma$ and it is denoted by $T \sslash \Gamma$. The \emph{spectral extended quotient of $T$ by $\Gamma$} is the quotient $Y/\Gamma$ and it is denoted by $T \sslash \widehat{\Gamma}$.
\end{definition}

Notice that in \cite{Aubert:2014ac} the authors state their conjecture with the hypothesis that $G$ is quasi-split. They have also a conjecture when $G$ is non necessarily quasi-split.

\begin{conj}[Aubert-Baum-Plymen-Solleveld]
Let $G$ be a split connected reductive $p$-adic group and $\mathfrak{s} \in \mathfrak{B}(G)$ be an inertial pair for $G$. Then 
\begin{enumerate}
\item 
The cuspidal support map $$\Sc : \Irr(G)_{\mathfrak{s}} \rightarrow T_{\mathfrak{s}}/W_{\mathfrak{s}}$$ is one-to-one if and only if the action of $W_{\mathfrak{s}}$ on $T_{\mathfrak{s}}$ is free.
\item 
There is a canonically defined commutative triangle 
\[
\begin{tikzcd}
{} & T_{\mathfrak{s}} \sslash \widehat{W_{\mathfrak{s}}} \arrow[swap]{ld}{\mu_{\mathfrak{s}}}\arrow{rd} & \\
 \Irr(G)_{\mathfrak{s}} \arrow{rr} & & \Phi(G)_{\mathfrak{s}}
\end{tikzcd}
\]
\end{enumerate}
Moreover, the bijection $\mu_{\mathfrak{s}}$ should satisfies the following properties: 
\begin{enumerate}[(i)]
\item The bijection $\mu_{\mathfrak{s}}$ maps $K_{\mathfrak{s}} \sslash \widehat{W_{\mathfrak{s}}}$ onto $\Irr(G)_{\mathfrak{s},\temp}$.
\item For many $\mathfrak{s} \in \mathcal{B}(G)$, the diagram \begin{center}
\begin{tikzcd}
  \Irr(G)_{\mathfrak{s}} \arrow[swap]{rd}{\Sc} \arrow[leftrightarrow]{rr}{\mu_{\mathfrak{s}}} & & T_{\mathfrak{s}} \sslash \widehat{W_{\mathfrak{s}}} \arrow{ld}{\proj_{\mathfrak{s}}} \\
   & T_{\mathfrak{s}} / W_{\mathfrak{s}} &
  \end{tikzcd}
\end{center}
does not commute.
\item There is an algebraic family $$\theta_z : T_{\mathfrak{s}} \sslash \widehat{W_{\mathfrak{s}}} \rightarrow T_{\mathfrak{s}} / W_{\mathfrak{s}}$$ of finite morphisms of algebraic varieties, with $z \in \C^{\times}$, such that $$\theta_{1}=\proj_{\mathfrak{s}}, \quad \theta_{\sqrt{q}}=\Sc \circ \mu_{\mathfrak{s}}$$
\item For each connected component $\mathbf{c}$ of the affine variety $T_{\mathfrak{s}} \sslash W_{\mathfrak{s}}$, there is a cocharacter $$h_{\mathbf{c}} : \C^{\times} \longrightarrow T_{\mathfrak{s}}$$ such that $$\theta_z[t,w]=W_{\mathfrak{s}}(h_{\mathbb{c}}(z) \cdot t) \in T_{\mathfrak{s}}/W_{\mathfrak{s}}, $$ for all $[t,w] \in \mathbf{c}$.\\ Let $Z_1,\ldots,Z_r$ be the connected components of the affine variety $T_{\mathfrak{s}} \sslash W_{\mathfrak{s}}$ and let $h_1,\ldots,h_r$ be the cocharacters associated. Let $$\nu_{\mathfrak{s}} : X_{\mathfrak{s}} \rightarrow T_{\mathfrak{s}} \sslash W_{\mathfrak{s}}$$ be the quotient map. Then the connected components $X_1,\ldots,X_r$ of the affine variety $X_{\mathfrak{s}}$ can be chosen with \begin{itemize}
\item $\mu_{\mathfrak{s}}(X_j)=Z_j$ for $j \in \llbracket 1,r \rrbracket$.
\item For each $z \in \C^{\times}$ the map $m_z : X_j \to T_{\mathfrak{s}} / W_{\mathfrak{s}}$, which is the composition $$ \begin{array}{lllll}
X_j & \to & T_{\mathfrak{s}} & \to & T_{\mathfrak{s}}/W_{\mathfrak{s}} \\
(t,w) & \mapsto & h_j(z)t & \mapsto & W_{\mathfrak{s}} (h_j(z)t)
\end{array}$$ makes the diagram \begin{center}
\begin{tikzcd}
  X_j \arrow[swap]{rd}{m_z} \arrow{rr}{\nu_{\mathfrak{s}}} & & Z_j \arrow{ld}{\theta_z} \\
   & T_{\mathfrak{s}} / W_{\mathfrak{s}} &
  \end{tikzcd}
\end{center}
\item There exists a map of sets $\lambda : {Z_1,\ldots,Z_r} \to V$ (called a labeling) such that for any two points $[t,w]$ and $[t',w']$ of $T_{\mathfrak{s}} \sslash W_{\mathfrak{s}}$: $\mu_{\mathfrak{s}}[t,w]$ and $\mu_{\mathfrak{s}}[t',w']$ are in the same $L$-packet if and only if $\theta_z[t,w]=\theta_z[t',w']'$ for all $z \in \C^{\times}$ and $\lambda[t,w]=\lambda[t',w']$, where $\lambda$ has been lifted to a labelling of $T_{\mathfrak{s}} \sslash W_{\mathfrak{s}}$ in the evident way.
\end{itemize}
\end{enumerate}

\end{conj}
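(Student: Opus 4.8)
The plan is to establish a \emph{Galois version} of the conjecture for split classical groups $G=\Sp_{2n}(F)$ or $\SO_n(F)$, where Theorem~\ref{theoremesupportcuspidal} and Proposition~\ref{propfiber} are available, and then to transport it across the local Langlands correspondence $\rec_G^e:\Irr(G)\to\Phi_e(G)$ of Arthur and M{\oe}glin, which I take as known together with its compatibility with parabolic induction (Conjecture~\ref{conjindLLC}, established for these groups in \cite[4.7]{Moussaoui:2015}) and with temperedness. First I would fix $\mathfrak{s}=[M,\sigma]\in\mathcal{B}(G)$ and pass to the inertial class $\wj\in\mathcal{B}_e^{\st}(G)$ of enhanced parameters attached to it. The decomposition $\Pi_{\wi}^{+}(G)=\bigsqcup_{\Sil(\mathfrak{s})=\wi}\Irr(G)_{\mathfrak{s}}$ proved above shows that $\rec_G^e$ restricts to a bijection $\Irr(G)_{\mathfrak{s}}\to\Phi_e(G)_{\wj}$, where $\Phi_e(G)_{\wj}$ is the fiber of $\Scl$ over the $\mathcal{W}_{\wj}$-orbit determined by $\mathfrak{s}$; and the Langlands correspondence for characters $\mathcal{X}(M)\simeq\mathcal{X}(\widehat{M})$, together with the matching $W_{\mathfrak{s}}\simeq\mathcal{W}_{\wj}$, furnishes a $W_{\mathfrak{s}}$-equivariant isomorphism $T_{\mathfrak{s}}\simeq\mathcal{T}_{\wj}$, hence identifications $T_{\mathfrak{s}}\sslash\widehat{W_{\mathfrak{s}}}\simeq\mathcal{T}_{\wj}\sslash\widehat{\mathcal{W}_{\wj}}$ and $T_{\mathfrak{s}}/W_{\mathfrak{s}}\simeq\mathcal{T}_{\wj}/\mathcal{W}_{\wj}$. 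So it would suffice to build a bijection $\mu_{\wj}:\mathcal{T}_{\wj}\sslash\widehat{\mathcal{W}_{\wj}}\to\Phi_e(G)_{\wj}$ over $\mathcal{T}_{\wj}/\mathcal{W}_{\wj}$ with the asserted extra properties.

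To construct $\mu_{\wj}$ I would argue as follows. A point of $\mathcal{T}_{\wj}\sslash\widehat{\mathcal{W}_{\wj}}$ is a class $[t,\rho]$ with $t=(\varphi\chi)_{\widehat{M}}$ and $\rho\in\Irr((\mathcal{W}_{\wj})_t)$; put $\mu=\restriction{(\varphi\chi)}{W_F}$. The heart of the matter will be a canonical isomorphism
\[
(\mathcal{W}_{\wj})_t\;\simeq\;W_{H_\mu^L}^{H_\mu^G}=N_{H_\mu^G}(H_\mu^L)/H_\mu^L ,
\]
where $H_\mu^G=Z_{\widehat{G}}(\mu)$, $H_\mu^L=Z_{\widehat{L}}(\mu)$ and $\widehat{L}$ is the Levi subgroup of $\widehat{G}$ attached to $\varphi$ in the proof of Theorem~\ref{theoremesupportcuspidal}; for $\widehat{G}$ a classical complex group both sides are read off the Jordan decomposition of $\mu(\Fr)$ and are visibly isomorphic, using the notion of quasi-Levi subgroup from Section~\ref{sec:Springer}. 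Granting this, Proposition~\ref{propfiber} already identifies $\Phi_e(G)_{\wj}$, as a set over $\mathcal{T}_{\wj}/\mathcal{W}_{\wj}$, with $\bigsqcup_i\Irr(W_{H_{\mu_i}^L}^{H_{\mu_i}^G})$ indexed by the realizable correcting cocharacters $\chi_{c_i}$, and the generalized Springer correspondence for $H_{\mu_i}^G$ turns each $\rho$, via the recipe in that proof, into a unipotent class $\gamma_{(\mu_i,\rho)}$ and an enhancement $\eta$, yielding the parameter $\phi_{(\mu_i,\rho)}=\mu_i\gamma_{(\mu_i,\rho)}$ of $G$. Setting $\mu_{\wj}[t,\rho]=(\phi_{(\mu_i,\rho)},\eta)$, well-definedness and bijectivity over $\mathcal{T}_{\wj}/\mathcal{W}_{\wj}$ is then exactly the content of Proposition~\ref{propfiber} read through the displayed isomorphism.

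With $\mu_{\wj}$ in hand, and $\mu_{\mathfrak{s}}:=(\rec_G^e)^{-1}\circ\mu_{\wj}$ after the identifications above, the remaining points should follow by bookkeeping. The triangle in~(2) commutes because its right edge sends $[t,\rho]$ to the parameter $\phi_{(\mu_i,\rho)}\in\Phi(G)_{\mathfrak{s}}$, which is $\rec_G(\mu_{\mathfrak{s}}^{-1}[t,\rho])$. For~(1): $\Sc$ is one-to-one on $\Irr(G)_{\mathfrak{s}}$ iff every fiber $\bigsqcup_i\Irr(W_{H_{\mu_i}^L}^{H_{\mu_i}^G})$ is a singleton, and via $W_{H_\mu^L}^{H_\mu^G}\simeq(\mathcal{W}_{\wj})_\mu$ this holds for all $\mu$ exactly when $\mathcal{W}_{\wj}\simeq W_{\mathfrak{s}}$ acts freely on $\mathcal{T}_{\wj}\simeq T_{\mathfrak{s}}$, freeness also forcing the realizable correcting cocharacter to be unique. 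For~(i): under $\mathcal{X}(M)\simeq\mathcal{X}(\widehat{M})$ the maximal compact subtorus $K_{\mathfrak{s}}$ corresponds to those $\chi$ with $\mu$ bounded, so $\mu_{\wj}$ maps $K_{\mathfrak{s}}\sslash\widehat{\mathcal{W}_{\wj}}$ onto the parameters bounded on $W_F$, i.e. the tempered ones. For~(iii) I would set $\theta_z[t,w]=W_{\mathfrak{s}}(h_{\mathbf{c}}(z)\cdot t)$, where on a connected component $\mathbf{c}$ the cocharacter $h_{\mathbf{c}}$ is the correcting cocharacter realized along $\mathbf{c}$, transported to $T_{\mathfrak{s}}$ via $\mathcal{X}(\widehat{M})\simeq\mathcal{X}(M)$ and with its argument $\sqrt q$ replaced by $z$; then $\theta_1=\proj_{\mathfrak{s}}$ since this cocharacter is trivial at $1$, while $\theta_{\sqrt q}[t,w]$ is the orbit of $\restriction{\varphi}{W_F}\chi_{c_i}\cdot t=\lambda_\phi$, i.e. $\Sc\circ\mu_{\mathfrak{s}}$; that $z\mapsto\theta_z$ is an algebraic family of finite morphisms follows from the correcting cocharacter being constant on each component, a consequence of the finiteness of the set of correcting cocharacters (\cite[5.4.c]{Kazhdan:1987}), together with finiteness of scaling followed by the quotient map. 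Finally~(iv) repackages~(iii): the components $Z_j$ of $T_{\mathfrak{s}}\sslash W_{\mathfrak{s}}$ and the components $X_j$ of $X_{\mathfrak{s}}$ correspond through the component-preserving Springer bijection $\rho\leftrightarrow\gamma_{(\mu_i,\rho)}$, the $h_j$ are the attached correcting cocharacters, and the labeling records the $L$-packet of $\mu_{\mathfrak{s}}[t,w]$ --- the stated equivalence then holds because two enhanced parameters share an $L$-packet iff their underlying $\phi$, equivalently the pair $(\restriction{\phi}{W_F},N_{\phi})$, equivalently the data $(\theta_z[t,w]\text{ for all }z,\ \lambda[t,w])$, agree.

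The step I expect to be the main obstacle is the group-theoretic identification $(\mathcal{W}_{\wj})_t\simeq W_{H_\mu^L}^{H_\mu^G}$ and, more broadly, the verification that the fiber structure of Proposition~\ref{propfiber} matches the fiber structure of the spectral extended quotient: this is precisely where one must exploit the explicit description of centralizers in $\Sp_{2n}(\C)$ and $\SO_n(\C)$ in terms of Jordan types, the notion of quasi-Levi subgroup, and the fact that the relative Weyl groups arising in the generalized Springer correspondence are Coxeter groups (Section~\ref{sec:Springer}). A subsidiary point requiring care is the well-definedness of the cocharacter $h_{\mathbf{c}}$ on each connected component in~(iii)--(iv), which again reduces to the finiteness statement of \cite[5.4.c]{Kazhdan:1987}.
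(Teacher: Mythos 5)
Your proposal follows essentially the same route as the paper: a Galois version of the conjecture built from the stabilizer identification $(\mathcal{W}_{\wj})_t\simeq W_{H_\mu^L}^{H_\mu^G}$ (which the paper proves via Lusztig's centralizer results and the quasi-Levi table), the fiber description of Proposition~\ref{propfiber} together with the generalized Springer correspondence, the correcting cocharacters $c_u$ giving the family $\theta_z$ with $\theta_1=\proj_{\mathfrak{s}}$ and $\theta_{\sqrt q}=\Scl\circ\mu_{\wj}$, and finally transport to $\Irr(G)_{\mathfrak{s}}$ through the identifications $T_{\mathfrak{s}}\simeq\mathcal{T}_{\wj}$, $W_{\mathfrak{s}}\simeq\mathcal{W}_{\wj}$ and the local Langlands correspondence of Arthur--M{\oe}glin. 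The only difference is cosmetic (you build $\mu_{\wj}$ from the extended quotient toward the parameters rather than the reverse), and you correctly isolate the stabilizer computation as the main technical point.
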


Aubert, Baum and Plymen proved the conjecture for the group $G_2$ in \cite{Aubert:2011aa}. 
Solleveld proved a version of this conjecture for extended Hecke algebras in \cite{Solleveld:2012aa} which, as a consequence, demonstrates the validity of the ABPS for split classical groups. In a refined version stated in \cite{Aubert:2015}, Aubert, Baum, Plymen and Solleveld prove the conjecture for the inner forms of $\GL_n$ and $\SL_n$ using the relation with the Langlands correspondence. Recently, in \cite{Aubert:2014aa}, the authors prove the conjecture for the principal series representations of split connected reductive groups, in relation with the Langlands correspondence.

\subsection{Galois version of ABPS conjecture}

Let $G$ be a split classical group, {\it i.e.}, $G=\Sp_{2n}(F)$ or $G=\SO_n(F)$. Let $\wj=[\widehat{L},\varphi,\varepsilon] \in \mathcal{B}_e^{\st}(G)$. Recall the we have defined a torus $\mathcal{T}_{\wj}=\{(\varphi\chi)_{\widehat{L}} \mid \chi \in {}\mathcal{X}(\widehat{L})\}$. 
Since $\varphi$ is fixed and the multiplication by an unramified cocharacter does not affect the $\SL_2(\C)$ part, we can identify $\mathcal{T}_{\wj}$ with the restriction of $\varphi\chi$ to $W_F$ for all $\chi \in \mathcal{X}(\widehat{L})$. Moreover, if $(\phi,\eta) \in \Phi_e(G)_{\wj}$, we denote by $\rho_{(\phi,\eta)} \in \Irr(W_{H_{\phi}^L}^{H_{\phi}^G})$ the irreducible representation attached by the Springer correspondence.

\begin{theorem}\label{galoisABPS}
Let $G$ be a split classical group, {\it i.e.}, $G=\Sp_{2n}(F)$ or $G=\SO_n(F)$. Let $\wj=[\widehat{L},\varphi,\varepsilon] \in \mathcal{B}_e^{\st}(G)$.  Then the following map defines a bijection : $$\mu_{\wj} : \begin{array}[t]{ccc}
\Phi_{e}(G)_{\wj} & \longrightarrow & \mathcal{T}_{\wj}  \sslash \widehat{\mathcal{W}_{\wj}}\\
(\phi,\eta)  & \longmapsto & (\restriction{\phi}{W_F},\rho_{(\phi,\eta)})
\end{array}.$$ 
\end{theorem}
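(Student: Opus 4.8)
The plan is to construct the map $\mu_{\wj}$ explicitly, show it is well-defined on $\widehat{\mathcal{W}}_{\wj}$-orbits, and then exhibit an inverse. First I would unwind the definitions on both sides. On the source side, $\Phi_e(G)_{\wj}$ is the fiber over $\wj$ of the cuspidal support map $\Scl$ of Theorem~\ref{theoremesupportcuspidal}; by Proposition~\ref{propfiber} this fiber decomposes according to the finitely many correcting cocharacters $\chi_{c_1},\dots,\chi_{c_r}$ of $\varphi$ in $\widehat{G}$, and over $\mu_i = \restriction{\varphi}{W_F}\chi_{c_i}$ it is parametrized by $\Irr\!\big(W^{H^G_{\mu_i}}_{H^L_{\mu_i}}\big)$ via the generalized Springer correspondence for the (possibly disconnected, orthogonal-type) group $H^G_{\mu_i}$. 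On the target side, $\mathcal{T}_{\wj} = \{(\varphi\chi)_{\widehat L} \mid \chi \in \mathcal{X}(\widehat L)\}$ is identified with restrictions to $W_F$, and $\mathcal{W}_{\wj}$ acts on it; the spectral extended quotient $\mathcal{T}_{\wj}\sslash\widehat{\mathcal{W}}_{\wj}$ consists of $\mathcal{W}_{\wj}$-orbits of pairs $(t,\rho)$ with $\rho \in \Irr((\mathcal{W}_{\wj})_t)$. So the first real task is to identify, for $t = \mu_i \in \mathcal{T}_{\wj}$, the stabilizer $(\mathcal{W}_{\wj})_{\mu_i}$ with the relative Weyl group $W^{H^G_{\mu_i}}_{H^L_{\mu_i}} = N_{H^G_{\mu_i}}(H^L_{\mu_i})/H^L_{\mu_i}$. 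This is the technical heart: $\mathcal{W}_{\wj}$ is a subquotient of $N_{\widehat G}(\widehat L)$, and fixing $\mu_i$ up to $\widehat L$-conjugacy cuts it down exactly to the normalizer of $H^L_{\mu_i}=Z_{\widehat L}(\mu_i)$ inside $H^G_{\mu_i}=Z_{\widehat G}(\mu_i)$, modulo $H^L_{\mu_i}$; here one uses that $\widehat L = Z_{\widehat G}(Z^\circ_{H'})$ is recovered from $H^L_{\mu_i}$ as the centralizer of the connected centre of its quasi-Levi, exactly as in the construction inside the proof of Theorem~\ref{theoremesupportcuspidal}.

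Granting that identification, the map is forced: send $(\phi,\eta)$ to $(\restriction{\phi}{W_F}, \rho_{(\phi,\eta)})$, where $\restriction{\phi}{W_F} = \mu_i \in \mathcal{T}_{\wj}$ for the appropriate $i$ (this is where $\restriction{\phi}{W_F} = \restriction{\varphi}{W_F}\chi_c$ with $\chi_c$ a correcting cocharacter is used — it guarantees landing in $\mathcal{T}_{\wj}$), and $\rho_{(\phi,\eta)} \in \Irr(W^{H^L_{\mu_i}}_{H^G_{\mu_i}}) = \Irr((\mathcal{W}_{\wj})_{\mu_i})$ is the Springer-correspondence label. One then checks $\widehat{\mathcal{W}}_{\wj}$-equivariance: if $w \in \mathcal{W}_{\wj}$ sends $\mu_i$ to $\mu_j$, then conjugation by a representative of $w$ carries the Springer datum for $H^G_{\mu_i}$ to that for $H^G_{\mu_j}$ compatibly with the twist $\rho \mapsto w^*\rho$ on stabilizers — this is a naturality property of the generalized Springer correspondence under group isomorphisms, applied to the inner automorphism induced by $w$. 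Injectivity then follows because $\Scl$ together with the Springer label $\rho_{(\phi,\eta)}$ determines $(\phi,\eta)$ (this is essentially the content of Proposition~\ref{propfiber}: the data $(\widehat L,\varphi,\varepsilon)$, the choice of correcting cocharacter $\chi_{c_i}$, and $\rho$ reconstruct $\phi$ via $\phi_{(\mu_i,\rho)} = \mu_i\gamma_{(\mu_i,\rho)}$ and $\eta$ via the Springer bijection, after arranging the adaptedness normalization of \cite[3.16]{Moussaoui:2015}). Surjectivity: given a $\mathcal{W}_{\wj}$-orbit of $(t,\rho)$, pick the representative with $t = \mu_i$ a correcting-cocharacter twist — one must argue every point of $\mathcal{T}_{\wj}$ is $\mathcal{W}_{\wj}$-conjugate to such a $\mu_i$, which follows because the correcting cocharacters exhaust the relevant twists up to the action, again by the finiteness input from \cite[5.4.c]{Kazhdan:1987} — then $\rho$ gives a Springer datum on $H^G_{\mu_i}$, hence $(\phi_{(\mu_i,\rho)},\eta) \in \Phi_e(G)_{\wj}$ mapping to it.

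I expect the main obstacle to be the stabilizer identification $(\mathcal{W}_{\wj})_{t} \cong W^{H^G_t}_{H^L_t}$ together with the matching of the two parametrizations of the fiber (Springer side versus spectral-extended-quotient side) in a way compatible with the group actions. The subtlety is bookkeeping with the disconnected orthogonal-type groups $H^G_t$: one needs the generalized Springer correspondence for quasi-Levi subgroups of $\O_n(\C)$-type groups (the Theorem from \cite[A4,A8]{Moussaoui:2015} recalled above) rather than Lusztig's connected version, and one must be careful that $\widehat L$ — a genuine Levi of $\widehat G$ — corresponds on the $H^G_t$ side to a quasi-Levi $H^L_t$, so the "Weyl group" $W^{H^G_t}_{H^L_t}$ is the one appearing in that disconnected Springer correspondence, and that these match the component-group bookkeeping in $\mathcal{W}_{\wj}$. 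Once that dictionary is set up, the remaining verifications (well-definedness, equivariance, that the two maps are mutually inverse) are formal consequences of Proposition~\ref{propfiber} and the naturality of the Springer correspondence, together with the observation $\restriction{\phi}{W_F} = \restriction{\varphi}{W_F}\chi_c$ recorded after Theorem~\ref{theoremesupportcuspidal}.
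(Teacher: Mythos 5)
Your plan follows essentially the same route as the paper's proof: the heart in both cases is the identification of the stabilizer $(\mathcal{W}_{\wj})_{\restriction{\phi}{W_F}}$ with the relative Weyl group $W_{H_{\phi}^{L}}^{H_{\phi}^{G}}$ (which the paper carries out by the chain of isomorphisms using \cite[2.6.b]{Lusztig:1988aa} and the quasi-Levi table), with surjectivity drawn from Proposition~\ref{propfiber} and injectivity from the bijectivity of the generalized Springer correspondence for the groups $Z_{\widehat{G}}(\restriction{\varphi}{W_F}\chi)$. The only slight imprecision is in your surjectivity step --- the point is not that every $t\in\mathcal{T}_{\wj}$ is $\mathcal{W}_{\wj}$-conjugate to one of the finitely many $\mu_i$, but that every $t$ is an unramified twist $\restriction{\varphi}{W_F}\chi\chi_{c_i}$ arising from the fiber over some $(\widehat{L},\varphi\chi,\varepsilon)$ in the inertial class --- which is exactly what the paper's appeal to Proposition~\ref{propfiber} supplies.
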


Just before proving the theorem, notice that the theorem is true without assuming the Langlands correspondence.

\begin{proof}
Let $(\phi,\eta) \in \Phi_{e}(G)_{\wj}$. Then $\restriction{\phi}{W_F}=\restriction{\varphi}{W_F} \chi \chi_c$, where $\chi \in \mathcal{X}(L)$ and $\chi_c$ is the correcting cocharacter associated to $(\phi,\eta)$. Hence $\restriction{\phi}{W_F}$ is a twist of $\restriction{\varphi}{W_F}$ by an unramified cocharacter. Denote by $A_{\widehat{L}}=Z_{\widehat{L}}^{\circ}$ and note that the stabilizer of $\restriction{\phi}{W_F}$ is \begin{align*}
\mathcal{W}_{\wj,\phi} &= \{ w \in \mathcal{W}_{\wj} \mid ({}^w(\varphi \chi \chi_c))_{\widehat{L}} = (\varphi\chi \chi_c)_{\widehat{L}}\} \\ 
& \simeq N_{Z_{\widehat{G}}(\varphi\chi \chi_c)}(A_{\widehat{L}})/Z_{\widehat{L}}(\varphi\chi \chi_c) \\
& = N_{Z_{Z_{\widehat{G}}(\restriction{\varphi}{W_F}\chi \chi_c)}(\restriction{\varphi}{\SL_2})}(A_{\widehat{L}})/Z_{Z_{\widehat{L}}(\restriction{\varphi}{W_F}\chi \chi_c)}(\restriction{\varphi}{\SL_2})\\
& = N_{Z_{Z_{\widehat{G}}(\restriction{\phi}{W_F})}(\restriction{\varphi}{\SL_2})}(A_{\widehat{L}})/Z_{Z_{\widehat{L}}(\restriction{\phi}{W_F})}(\restriction{\varphi}{\SL_2})\\
& \simeq N_{Z_{\widehat{G}}(\restriction{\phi}{W_F})}(A_{\widehat{L}})/Z_{\widehat{L}}(\restriction{\phi}{W_F})\\
& = W_{H_{\phi}^L}^{H_{\phi}^{G}}.
\end{align*} 
Here we use \cite[2.6.b]{Lusztig:1988aa} and Table \ref{exemplequasiLevi} in the penultimate line.  This shows that the map $\mu_{\wj} $ is well defined. This map is surjective by  Proposition \ref{propfiber} and its proof. Moreover, the bijectivity of the Springer correspondence for the groups $H_{\varphi\chi}^{G}$ shows that this map is injective.

\end{proof}

\subsection{Proof of ABPS conjecture}\label{sec:proof}

Let $G$ be a split classical group and $\wj=[\widehat{L},\varphi,\varepsilon] \in \mathcal{B}_e^{\st}(G)$. Before proving the ABPS conjecture, let us introduce some definitions and notations. We denote by $\Phi(G)_2$ (resp. $\Phi(G)_{\temp}$) the set of discrete (resp. tempered) Langlands parameters of $G$. By definition, $\phi \in \Phi(G)_2$ when $\phi(W_F)$ is not contained in a proper Levi subgroup of $\widehat{G}$ and $\phi \in \Phi(G)_{\temp}$ when $\phi(W_F)$ is bounded. 
Similarly, we denote by $\Phi_e(G)_{2}$ (resp. $\Phi_e(G)_{\temp}$) the set of enhanced Langlands parameters for which the Langlands parameter is discrete (resp. tempered). 
Recall that in $\Sp_{2n}(\C)$ or $\SO_n(\C)$ the unipotent classes are completely determined by their Jordan decomposition, or in other words, by the partition associated (except for $\SO_{2n}(\C)$ and when the partition has only even parts with even multiplicities for which there are two distincts orbits). 
Because the group that we will consider are products of complex symplectic groups, orthogonal groups and general linear groups, the unipotent classes which arise in $Z_{\widehat{G}}(\restriction{\varphi}{W_F} \chi)^{\circ}$ are characterized by their partition. 
In particular, as $\chi$ runs over $\mathcal{X}(\widehat{L})$, finitely many unipotent classes arise in this manner. 
Let $\mathcal{CU}$ be a system of representative of unipotent classes of $Z_{\widehat{G}}(\restriction{\varphi}{W_F} \chi)^{\circ}$ when $\chi$ runs over $\mathcal{X}(\widehat{L})$. 
We can assume that elements in $\mathcal{CU}$ are adapted to $\varphi$ in $\widehat{G}$ (see \cite[3.16]{Moussaoui:2015}). 
Let $u \in \mathcal{CU}$ and $\gamma_{u} : \SL_2(\C) \longrightarrow Z_{\widehat{G}}(\varphi(I_F))^{\circ}$ be such that $\gamma_{u}$ is adapted to $\restriction{\varphi}{\SL_2}$. Define $$c_{u} : \begin{array}[t]{ccc}
\C^{\times} & \longrightarrow &Z_{\widehat{L}}^{\circ} \\
z & \longmapsto & \gamma_{u}\left(\begin{smallmatrix} z & 0 \\ 0 & z^{-1}\end{smallmatrix}\right) / \restriction{\varphi}{\SL_2}\left(\begin{smallmatrix} z & 0 \\ 0 & z^{-1}\end{smallmatrix}\right). 
\end{array}$$

\begin{prop}
Let $G$ be a split classical group and suppose $\wj=[\widehat{L},\varphi,\varepsilon] \in \mathcal{B}_e^{\st}(G)$.
The map $\mu_{\wj}$ satisfies the following properties.
\begin{enumerate}
\item The cuspidal support map $$\Scl : \Phi_e(G)_{\wj} \rightarrow \mathcal{T}_{\wj}/\mathcal{W}_{\wj}$$ is one-to-one if and only if the action of $\mathcal{W}_{\wj}$ on $\mathcal{T}_{\wj}$ is free.
\item Let $\mathcal{K}_{\wj}$ be the maximal compact torus in $\mathcal{T}_{\wj}$. Then the previous bijection induces a bijection 
\[
\mathcal{K}_{\wj} \sslash \widehat{\mathcal{W}_{\wj}} \longleftrightarrow \Phi_e(G)_{\wj} \cap \Phi_e(G)_{\temp}.
\]
\item Let $\mathcal{CU}$ be a system of representatives of unipotent classes of $Z_{\widehat{G}}(\restriction{\varphi}{W_F} \chi)^{\circ}$, when $\chi$ runs over $\mathcal{X}(\widehat{L})$. There exists a partition of $\mathcal{T}_{\wj} \sslash \widehat{\mathcal{W}_{\wj}}$ indexed by $\mathcal{CU}$ with the following properties.
\begin{enumerate}[(i)]
\item $\displaystyle \mathcal{T}_{\wj} \sslash \widehat{\mathcal{W}_{\wj}}=\bigsqcup_{u \in \mathcal{CU}} \left(\mathcal{T}_{\wj} \sslash \widehat{\mathcal{W}_{\wj}} \right)_{u}$ (namely a point $(t,\rho) \in \left(\mathcal{T}_{\wj} \sslash \widehat{\mathcal{W}_{\wj}} \right)_{u}$ if and only if $u$ is the unipotent class associated by the Springer correspondence to $\rho$).

\item We have a bijection 
\[
\bigsqcup_{\substack{\mathcal{U} \in \mathcal{CU} \\ u \text{  distinguished orbit}}} \left( \mathcal{T}_{\wj} \sslash \widehat{\mathcal{W}_{\wj}} \right)_{u} \longleftrightarrow \Phi_e(G)_{\wj} \cap\Phi_e(G)_{2}.
\]

\item For $z \in \C^{\times}$, define $$\theta_z: \mathcal{T}_{\wj} \sslash \widehat{\mathcal{W}_{\wj}} \longrightarrow \mathcal{T}_{\wj} / \mathcal{W}_{\wj} $$ by  $\theta_z(t,\rho)=\mathcal{W}_{\wj} \cdot (c_u(z) t)$ if $(t,\rho) \in \left(  \mathcal{T}_{\wj} \sslash \widehat{\mathcal{W}_{\wj}} \right)_{u}$. Then $$\theta_1 = \mathbf{p}_{\wj}, \qquad \text{and}\qquad \Scl= \theta_{\sqrt{q}}\circ \mu_{\wj}.$$
\item Let $u,v \in \mathcal{CU}$, $(t,\rho) \in \left(\mathcal{T}_{\wj} \sslash \widehat{\mathcal{W}_{\wj}} \right)_{u}$ and $(t',\rho') \in \left(\mathcal{T}_{\wj} \sslash \widehat{\mathcal{W}_{\wj}} \right)_{v}$. Then $\mu_{\wj}^{-1}(t,\rho)$ and $\mu_{\wj}^{-1}(t',\rho')$ have the same Langlands parameter if and only if $u=v$ and for all $z \in \C^{\times}, \,\, \theta_z(t,\rho)=\theta_z(t',\rho')$.
\end{enumerate}
\end{enumerate}
\end{prop}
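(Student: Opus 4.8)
The plan is to leverage Theorem~\ref{galoisABPS}, which already furnishes the bijection $\mu_{\wj}$, together with the fiber description of $\Scl$ in Proposition~\ref{propfiber}, and to verify the listed properties one at a time by tracking the Springer data through the correspondence. Throughout, the key bookkeeping device is the identification of $\mathcal{T}_{\wj}$ with the restrictions $\restriction{\varphi\chi}{W_F}$, so that a point of $\mathcal{T}_{\wj} \sslash \widehat{\mathcal{W}_{\wj}}$ is a pair $(t,\rho)$ with $t = \restriction{\phi}{W_F}$ a twist of $\restriction{\varphi}{W_F}$ by an unramified cocharacter and $\rho \in \Irr(W_{H^L_{\phi}}^{H^G_{\phi}})$; the Springer correspondence for the reductive group $H^G_{\phi} = Z_{\widehat{G}}(\restriction{\phi}{W_F})$ then attaches to $\rho$ a unipotent class $u_{\phi,\rho}$, which is the organizing datum for part (3).

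First I would dispose of (1): the action of $\mathcal{W}_{\wj}$ on $\mathcal{T}_{\wj}$ is free precisely when every stabilizer $\mathcal{W}_{\wj,\phi} = W_{H^L_{\phi}}^{H^G_{\phi}}$ is trivial (using the computation of stabilizers carried out in the proof of Theorem~\ref{galoisABPS}); when this relative Weyl group is trivial there is a unique $\rho$ over each $t$, so the fiber of $\Scl$ over $(\widehat{L},\varphi,\varepsilon)$ collapses — combined with Proposition~\ref{propfiber}, $\Scl$ becomes injective. Conversely, a nontrivial stabilizer produces several irreducible representations of a nontrivial Coxeter group, hence several points of $\Phi_e(G)_{\wj}$ with the same cuspidal support. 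For (2), I would use the standard fact that $\phi$ is tempered iff $\restriction{\phi}{W_F}$ is bounded, i.e.\ $t$ lies in the maximal compact subtorus $\mathcal{K}_{\wj}$; since temperedness of $\phi$ imposes no further restriction on $\rho$ (the Springer-attached datum is orthogonal to the unramified twist), the bijection $\mu_{\wj}$ restricts to $\mathcal{K}_{\wj} \sslash \widehat{\mathcal{W}_{\wj}} \leftrightarrow \Phi_e(G)_{\wj}\cap\Phi_e(G)_{\temp}$.

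For part (3) I would first justify that $\mathcal{CU}$ is finite and that every $\rho$ occurring does attach, via the Springer correspondence for the appropriate $H^G_{\varphi\chi}$, a unique unipotent class in $\mathcal{CU}$ — this is exactly the partition in (i), and it is well defined because (as noted in Section~\ref{sec:proof}) unipotent classes in the relevant products of classical groups are determined by partitions. Statement (ii) follows from the criterion that a Langlands parameter $\phi$ is discrete iff the image of $W_F$ is not contained in a proper Levi, which on the Springer side corresponds to $u_{\phi,\rho}$ being a distinguished nilpotent orbit in $H^G_{\phi}$; matching this against the distinguished orbits in $\mathcal{CU}$ gives the bijection. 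For (iii), I would compute $\theta_z(t,\rho)$ by recalling from the proof of Theorem~\ref{theoremesupportcuspidal} that the correcting cocharacter is $\chi_\phi/\chi_\varphi$, where $\chi_\phi(d_w) = \gamma_u(d_w)^{-1}$ and $\gamma_u$ is the Jacobson--Morozov $\SL_2$ for $u$; thus $\Scl(\phi,\eta)$ corresponds to $\mathcal{W}_{\wj}\cdot(\restriction{\varphi\chi}{W_F}\cdot\chi_c) = \mathcal{W}_{\wj}\cdot(c_u(\sqrt q)\,t)$, which is $\theta_{\sqrt q}\circ\mu_{\wj}$, while $z=1$ gives $c_u(1) = 1$ and $\theta_1 = \proj_{\wj}$. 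Finally (iv): two points have the same Langlands parameter iff the same $\phi$ arises, and since $\phi$ is reconstructed from $(t, u, \gamma_u)$ up to the Springer bijection, equality of Langlands parameters forces $u = v$ (same unipotent class) and equality of the twisted points $c_u(z)t$ for all $z$, i.e.\ $\theta_z(t,\rho) = \theta_z(t',\rho')$ for all $z \in \C^\times$; conversely these two conditions pin down $\restriction{\phi}{W_F}$ and the unipotent orbit, hence $\phi$.

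The main obstacle I anticipate is (iii) — specifically, making the chain of identifications $\Scl(\phi,\eta) \leftrightarrow \theta_{\sqrt q}(\mu_{\wj}(\phi,\eta))$ genuinely precise: one must check that the Jacobson--Morozov cocharacter $\gamma_u$ chosen to define $c_u$ agrees (after the adaptedness conventions of \cite[3.16]{Moussaoui:2015}) with the one implicitly used when the correcting cocharacter $\chi_c$ of the parameter $\phi$ with Springer datum $(\mu_i,\rho)$ was produced in Proposition~\ref{propfiber}, and that the normalization $d_w = \diag(|w|^{1/2},|w|^{-1/2})$ evaluated at a Frobenius indeed contributes the factor $\sqrt q$. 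Everything else is a fairly mechanical transport of the Springer correspondence through the dictionary already established, but this compatibility of cocharacter normalizations is where the real content of the commuting-triangle statement lives.
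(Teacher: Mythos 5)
Your proposal is correct and follows essentially the same route as the paper: both rest on the bijection of Theorem~\ref{galoisABPS} and the stabilizer computation $\mathcal{W}_{\wj,\phi}\simeq W_{H_{\phi}^L}^{H_{\phi}^G}$, identify temperedness and discreteness with boundedness of $t$ and distinguishedness of the Springer unipotent, and reduce (iii) and (iv) to the correcting-cocharacter bookkeeping. In fact you give more detail than the paper on (iii) (which the paper dispatches in one sentence); just note the small inversion in your intermediate formula — the cuspidal support corresponds to $\restriction{\varphi\chi}{W_F}=\chi_c^{-1}\cdot t$, and the identity you need is $c_u(\sqrt q)=\chi_c(\Fr)^{-1}$, which is exactly the normalization check you correctly flag as the crux.
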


\begin{proof}
In Theorem \ref{galoisABPS} we proved that we have a bijection between $\Phi_e(G)_{\wj}$ and the extended quotient $\mathcal{T}_{\wj} \sslash \widehat{\mathcal{W}_{\wj}}$. Hence, $\Scl$ is a bijection if and only if there is a bijection between $\mathcal{T}_{\wj} \sslash \widehat{\mathcal{W}_{\wj}}$ and $\mathcal{T}_{\wj} / \mathcal{W}_{\wj}$. 
The last statement is equivalent to saying that $\mathcal{W}_{\wj}$ acts freely on $\mathcal{T}_{\wj}$. 
By definition of the map $\mu_{\wj}$, the restriction to $W_F$ of the Langlands parameter associated to a point $(\mu,\rho) \in \mathcal{T}_{\wj} \sslash \widehat{\mathcal{W}_{\wj}}$ is $\mu$. Hence, $(\mu,\rho) \in \mathcal{K}_{\wj} \sslash \widehat{\mathcal{W}_{\wj}}$ if and only if $\mu(W_F)$ is bounded, if and only if $\mu_{\wj}^{-1}(\mu,\rho) \in \Phi_e(G)_{\wj,\temp}$. For point $(i)$: the definition made in the proposition defines the partition. 
For point $(ii)$: a Langlands parameter $\phi$ of $G$ is discrete if and only if $\phi(1,(\begin{smallmatrix} 1 & 1 \\ 0 & 1 \end{smallmatrix}))$ defines a distinguished unipotent class of $H_{\phi}^G$. By the construction of $\mu_{\wj}$ and the partition defined in $(i)$, this shows $(ii)$. For point $(iii)$ is a consequence of the definition of the cuspidal support of an enhanced Langlands parameter. To conclude, for point $(iv)$,  if $u=v$ and if for all $z \in \C^{\times}, \,\, \theta_z(t,\rho)=\theta_z(t',\rho')$, then for $z=1$ we obtain $t=t'$. Recall that $t$ represents the restriction to $W_F$ of the Langlands parameter associate to the point. Since the points $(t,\rho)$ and $(t',\rho')$ have the same labelling $u$, their Langlands parameters have the same restriction to $\SL_2$, hence they have the same Langlands parameter. The other direction is evident by the definitions.
\end{proof}

\begin{theorem}\label{thm:main}
Let $G$ be a split classical group and let $\mathfrak{s}=[L,\sigma]$ be an inertial pair. Then there exists a bijection $$\Irr(G)_{\mathfrak{s}} \longleftrightarrow T_{\mathfrak{s}} \sslash \widehat{W_{\mathfrak{s}}},$$ which satisfies the same properties described above by replacing the corresponding object on the side of representation theory. 
\end{theorem}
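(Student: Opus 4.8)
The strategy is to transport the ``Galois version'' of the conjecture --- Theorem~\ref{galoisABPS} together with the proposition that precedes the present statement --- to the representation-theoretic side through the local Langlands correspondence, which for split classical groups is available in the enhanced form $\rec_G^{e}\colon\Irr(G)\xrightarrow{\ \sim\ }\Phi_e(G)$ thanks to the work of Arthur and M\oe glin. Write $\mathfrak{s}=[L,\sigma]$, let $\widehat{L}$ be the Levi subgroup of $\widehat{G}$ dual to $L$, let $(\varphi,\varepsilon)\in\Phi_e(L)_{\cusp}$ be the enhanced Langlands parameter of the supercuspidal $\sigma$, and let $\wj=[\widehat{L},\varphi,\varepsilon]\in\mathcal{B}_e^{\st}(G)$ be the corresponding inertial class, i.e.\ the image of $\mathfrak{s}$ under the bijection $\mathcal{B}(G)\simeq\mathcal{B}_e^{\st}(G)$. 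The first step is to show that $\rec_G^{e}$ restricts to a bijection $\Irr(G)_{\mathfrak{s}}\to\Phi_e(G)_{\wj}$. This is precisely the commutativity of the square relating $\Sc$, $\rec_G^{e}$, the Galois cuspidal support map $\Scl$ of Theorem~\ref{theoremesupportcuspidal}, and the bijection $\rec_{\Omega(G)}^{e}\colon\Omega(G)\simeq\Omega_e^{\st}(G)$ (which sends the fibre over $\mathfrak{s}$ to the fibre over $\wj$), a commutativity that rests on the compatibility of parabolic induction with the Langlands correspondence (Conjecture~\ref{conjindLLC}), established for split classical groups in \cite[4.7]{Moussaoui:2015}, combined with Proposition~\ref{proparamcusp} and the validity of Conjecture~\ref{conjecturecusp} for these groups.

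The second step is to match the extended-quotient data, i.e.\ to construct an isomorphism of pairs $(T_{\mathfrak{s}},W_{\mathfrak{s}})\simeq(\mathcal{T}_{\wj},\mathcal{W}_{\wj})$ --- a complex torus equipped with a finite group action. Since $\widehat{L}$ is dual to $L$, the Langlands correspondence for characters gives $\mathcal{X}(L)\simeq\mathcal{X}(\widehat{L})$, and the compatibility of $\rec_L$ with unramified twists turns $\sigma\otimes\chi\mapsto(\varphi\,\widehat{\chi})_{\widehat{L}}$ into a bijection $T_{\mathfrak{s}}\simeq\mathcal{T}_{\wj}$ of tori. For the finite groups, both $W_{\mathfrak{s}}$ and $\mathcal{W}_{\wj}$ sit inside $N_G(L)/L\simeq N_{\widehat{G}}(\widehat{L})/\widehat{L}$; the defining condition $\sigma^{w}\simeq\sigma\otimes\chi$ of $W_{\mathfrak{s}}$ translates, via the equivariance of $\rec_L$ under the $N_G(L)/L$-action and its compatibility with unramified twisting (which does not disturb the $\mathcal{S}$-group, hence not the enhancement), into ``$({}^{w}\varphi)_{\widehat{L}}=(\varphi\,\widehat{\chi})_{\widehat{L}}$ and $\varepsilon^{w}\simeq\varepsilon$'', which is exactly the condition defining $\mathcal{W}_{\wj}$; the two actions on the torus agree for the same reason. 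Thus the pairs are isomorphic, inducing $T_{\mathfrak{s}}\sslash\widehat{W_{\mathfrak{s}}}\simeq\mathcal{T}_{\wj}\sslash\widehat{\mathcal{W}_{\wj}}$, and composing this with $\mu_{\wj}$ from Theorem~\ref{galoisABPS} and the bijection of the first step produces the desired $\mu_{\mathfrak{s}}\colon\Irr(G)_{\mathfrak{s}}\to T_{\mathfrak{s}}\sslash\widehat{W_{\mathfrak{s}}}$.

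The third step is to transcribe the remaining assertions. Freeness of the $W_{\mathfrak{s}}$-action on $T_{\mathfrak{s}}$ is equivalent to freeness of the $\mathcal{W}_{\wj}$-action on $\mathcal{T}_{\wj}$, and $\Sc$ corresponds to $\Scl$; the maximal compact subtorus $K_{\mathfrak{s}}$ of $T_{\mathfrak{s}}$ corresponds to $\mathcal{K}_{\wj}$, and since the Langlands correspondence for classical groups preserves temperedness this identifies $\Irr(G)_{\mathfrak{s},\temp}$ with $\Phi_e(G)_{\wj}\cap\Phi_e(G)_{\temp}$; likewise square-integrable representations match distinguished unipotent orbits. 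The morphisms $\theta_z$ and the correcting cocharacters $c_u$ become the labelled cocharacters $h_{\mathbf{c}}$, the one attached to $[t,\rho]$ being $\phi_\pi(1,\diag(t,t^{-1}))/\phi_\sigma(1,\diag(t,t^{-1}))$ with $\pi=\mu_{\mathfrak{s}}^{-1}[t,\rho]$, as predicted in the introduction; and two points map to the same $L$-packet on each side under the same criterion, because $\rec_G^e$ is $L$-packet-preserving by construction.

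The main obstacle is the second step: one must check that the bijection $\Irr(G)_{\mathfrak{s}}\simeq\Phi_e(G)_{\wj}$ coming from $\rec_G^{e}$ is equivariant enough to carry both the torus structure \emph{and} the finite group with its action. The delicate point is that $W_{\mathfrak{s}}$ is defined purely representation-theoretically, with no reference to an enhancement, whereas $\mathcal{W}_{\wj}$ carries the extra constraint $\varepsilon^{w}\simeq\varepsilon$; reconciling the two amounts to knowing that the explicit Arthur--M\oe glin correspondence for the Levi $L$ intertwines the $N_G(L)/L$-action on supercuspidal representations with the corresponding action on enhanced cuspidal parameters, and is compatible with unramified twisting. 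Granting these standard compatibilities, everything else is a direct transcription of Theorem~\ref{galoisABPS} and the preceding proposition through the resulting dictionary.
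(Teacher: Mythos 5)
Your proposal is correct and follows essentially the same route as the paper: both arguments compose the identification $(T_{\mathfrak{s}},W_{\mathfrak{s}})\simeq(\mathcal{T}_{\wj},\mathcal{W}_{\wj})$ (which the paper simply cites from \cite[4.1]{Moussaoui:2015}), the bijection $\Irr(G)_{\mathfrak{s}}\leftrightarrow\Phi_e(G)_{\wj}$ coming from the Arthur--M\oe glin correspondence and the cuspidal-support compatibility (cited as \cite[4.6]{Moussaoui:2015}), and the Galois-side bijection of Theorem~\ref{galoisABPS}. The only difference is that you sketch justifications for the first two ingredients where the paper defers to the earlier reference, and you correctly flag the matching of the $\varepsilon^{w}\simeq\varepsilon$ condition as the delicate point.
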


\begin{proof}
In \cite[4.1]{Moussaoui:2015} we proved that if $\mathfrak{s}=[L,\sigma] \in \mathcal{B}(G)$ is an inertial pair with $L$ a Levi subgroup of $G$ and if $\sigma$ is an irreducible supercuspidal representation of $L$ and if $\wj=[\widehat{L},\varphi,\varepsilon] \in \mathcal{B}_e^{\st}(G)$ is the corresponding inertial triple obtained by the local Langlands correspondence, then $T_{\mathfrak{s}} \simeq \mathcal{T}_{\wj}, \,\, W_{\mathfrak{s}} \simeq \mathcal{W}_{\wj}$ and the action of $\mathcal{W}_{\wj}$ on $\mathcal{T}_{\wj}$ corresponds to the action of $W_{\mathfrak{s}}$ on $T_{\mathfrak{s}}$ through the previous isomorphisms. In particular, we have a natural bijection $$T_{\mathfrak{s}} \sslash \widehat{W_{\mathfrak{s}}} \longleftrightarrow \mathcal{T}_{\wj} \sslash \widehat{\mathcal{W}_{\wj}}.$$
Moreover, in theorem \ref{galoisABPS} we have seen that there is a bijection $$\mathcal{T}_{\wj} \sslash \widehat{\mathcal{W}_{\wj}} \longleftrightarrow \Phi_e(G)_{\wj}.$$ Notice that we need the local Langlands correspondence for supercuspidal representations, which is given by the work of Arthur. Finally, \cite[4.6]{Moussaoui:2015} shows that $\Irr(G)_{\mathfrak{s}}$ is in bijection with $\Phi_e(G)_{\wj}$. By composing these three bijections we obtain a proof of the Aubert-Baum-Plymen-Solleveld conjecture for classical groups.
\end{proof}

\begin{landscape}
\begin{figure}
\begin{center}
\begin{tikzpicture}[scale=0.7,every node/.style={minimum size=1cm},on grid]

\begin{scope}[
		yshift=-300,
		every node/.append style={yslant=\yslant,xslant=\xslant},
		yslant=\yslant,xslant=\xslant
	] 
\fill[white,fill opacity=.8] (-3,-3) rectangle (3,3);
\draw[black, dashed] (-3,-3) rectangle (3,3); 
\draw[->,black] (0,0) -- (1,0);
\draw[->,black] (0,0) -- (0,1);		

\draw[step=1.0,black,very thin,dashed] (-3,-3) grid (3,3);

\draw[fill=black] (1,-1) circle (.03); 

\coordinate (projq1q2) at (1,-1);	
\coordinate (projstein) at (-0.866025,-2.59808);
\coordinate (projdelta2) at (-0.333333,-1);
	\end{scope}
	
		\begin{scope}[
		yshift=-200,
		every node/.append style={yslant=\yslant,xslant=\xslant},
		yslant=\yslant,xslant=\xslant
	] 
\fill[white,fill opacity=.8] (-3,-3) rectangle (3,3);
		\draw[black!10!blue, dashed] (-3,-3) rectangle (3,3); 
    \draw[->,black!10!blue] (0,0) -- (1,0);
    \draw[->,black!10!blue] (0,0) -- (0,1);		
		\draw[black!10!blue,ultra thick] (-3,1)--(3,1);
		\draw[black!10!blue,ultra thick] (-3,-1)--(3,-1);		
		\draw[fill=black!10!blue] (1,-1) circle (.05); 
			
\coordinate (temp) at (-2,1);
\coordinate (temp2) at (-2,-1);
\coordinate (q1q2) at (1,-1);

	\end{scope}
	
	\begin{scope}[
		yshift=-100,
		every node/.append style={yslant=\yslant,xslant=\xslant},
		yslant=\yslant,xslant=\xslant
	] 
       \fill[white,fill opacity=.8] (-3,-3) rectangle (3,3);
		\draw[black!30!green, dashed] (-3,-3) rectangle (3,3); 
    \draw[->,black!30!green] (0,0) -- (1,0);
    \draw[->,black!30!green] (0,0) -- (0,1);		
		
		\draw[black!30!green,ultra thick] (-3,-3)--(3,3);
\coordinate (stein) at (-1.5,-1.5);
	\end{scope}
	
		\begin{scope}[
		yshift=0,
		every node/.append style={yslant=\yslant,xslant=\xslant},
		yslant=\yslant,xslant=\xslant
	] 
       \fill[white,fill opacity=.8] (-3,-3) rectangle (3,3);
		\draw[black!10!red, dashed] (-3,-3) rectangle (3,3); 
\draw[->,black!10!red] (0,0) -- (1,0);
\draw[->,black!10!red] (0,0) -- (0,1);
\draw[fill=black!10!red]  
(1,1) circle (.05)
(-1,-1) circle (.05); 
\coordinate (delta) at (1,1);
\coordinate (delta2) at (-1,-1);	
\end{scope}
	
		\draw[-latex,black!10!red] 
			(5,4.9) to[out=180,in=90] (delta);        
\draw[black!10!red](5,5.2)node[right]{$((1,1),\rho_{\{1^2,\varnothing\}} \boxtimes 1) \leftrightarrow \delta(\zeta)$};		
\draw[black!10!red](5,4.6)node[right]{$((1,1),\rho_{\{1^2,\varnothing\}} \boxtimes \varepsilon) \leftrightarrow \delta'(\zeta)$};	

\draw[black!10!red] (13,4.9) node[right,text width=7cm]{$\zeta \boxtimes (S_3\oplus S_1) \oplus 1$ (L-packet=$\{ \delta(\zeta\xi),\delta'(\zeta\xi),\sigma_{\zeta},\sigma_{\zeta}'\}$ with $\sigma_{\zeta},\sigma_{\zeta}'$ supercuspidal)};	
\draw[black!10!red] (13,4.9)--(11.3,4.9);

		\draw[-latex,black!10!red] 
			(5,2.9) to[out=180,in=30] (delta2);        
\draw[black!10!red](5,3.2)node[right]{$((-1,-1),\rho_{\{1^2,\varnothing\}} \boxtimes 1) \leftrightarrow \delta(\zeta\xi)$};		
\draw[black!10!red](5,2.6)node[right]{$((-1,-1),\rho_{\{1^2,\varnothing\}} \boxtimes \varepsilon) \leftrightarrow \delta'(\zeta\xi)$};	

\draw[black!10!red] (13,2.9)node[right,text width=7cm]{$\zeta\xi \boxtimes (S_3\oplus S_1) \oplus 1$ (L-packet=$\{ \delta(\zeta\xi),\delta'(\zeta\xi),\sigma_{\zeta\xi},\sigma_{\zeta\xi}'\}$ with $\sigma_{\zeta\xi},\sigma_{\zeta\xi}'$ supercuspidal)};	
\draw[black!10!red] (13,2.9)--(12.2,2.9);

		\draw[-latex,black!30!green] 
			(5,-2) to[out=180,in=30] (stein);        
\draw[black!30!green](5,-2)node[right]{$((z,z),\rho_{(1^2)}) \leftrightarrow \chi\zeta \mathrm{St}_{\mathrm{GL}_2} \rtimes 1$};	

\draw[black!30!green](13,-2)node[right]{$\chi\zeta \boxtimes S_2 \oplus 1 \oplus \chi^{-1} \zeta \boxtimes S_2$};	
\draw[black!30!green] (13,-2)--(11.5,-2);

\draw[-latex,black!10!blue] (5,-4.5) to[out=180,in=-30] (temp);
\draw[-latex,black!10!blue] (5,-6.1) to[out=180,in=-30] (temp2);
\draw[-latex,black!10!blue] (5,-8.1) to[out=180,in=-30] (q1q2);
                
\draw[black!10!blue](5,-4.2)node[right]{$((z,1),1) \leftrightarrow \chi\zeta \rtimes T_1^{\zeta}$};	
\draw[black!10!blue](5,-4.8)node[right]{$((z,1),\varepsilon) \leftrightarrow \chi\zeta \rtimes T_2^{\zeta}$};	

\draw[black!10!blue] (13,-4.5)node[right]{$\chi\zeta \oplus \zeta \oplus 1 \oplus\zeta \oplus \chi^{-1}\zeta$};		
\draw[black!10!blue] (13,-4.5)--(10,-4.5);

\draw[black!10!blue](5,-5.8)node[right]{$((z,-1),1) \leftrightarrow \chi\zeta \rtimes T_1^{\xi \zeta}$};	
\draw[black!10!blue](5,-6.4)node[right]{$((z,-1),\varepsilon) \leftrightarrow \chi\zeta \rtimes T_2^{\xi \zeta}$};	

\draw[black!10!blue] (13,-6.1) node[right]{$\chi\zeta \oplus \zeta\xi \oplus 1 \oplus \xi\zeta \oplus \chi^{-1}\zeta$};		
\draw[black!10!blue] (13,-6.1) --(10.6,-6.1);

\draw[black!10!blue](5,-7.2)node[right]{$((1,-1),1\boxtimes 1) \leftrightarrow Q_1(\zeta \rtimes T_1^{\xi \zeta})$};	
\draw[black!10!blue](5,-7.8)node[right]{$((1,-1),1\boxtimes \varepsilon) \leftrightarrow Q_2(\zeta \rtimes T_1^{\xi \zeta})$};
\draw[black!10!blue](5,-8.4)node[right]{$((1,-1),\varepsilon\boxtimes 1) \leftrightarrow Q_1(\zeta \rtimes T_2^{\xi \zeta})$};	
\draw[black!10!blue](5,-9)node[right]{$((1,-1),\varepsilon\boxtimes\varepsilon) \leftrightarrow Q_2(\zeta\rtimes T_2^{\xi \zeta})$};		

\draw[black!10!blue] (13,-8.1)node[right]{$\zeta \oplus \xi \zeta \oplus 1 \oplus \xi \zeta \oplus \zeta$};		

\draw[black!30!green,->] (stein) to (projstein);
\draw[black!10!blue,->] (q1q2) to (projq1q2);
\draw[black!10!red,->] (delta2) to (projdelta2);
	
\end{tikzpicture}
\end{center}
\caption{Extended quotient for $\Sp_4(F)$}\label{pretty picture}
\end{figure}
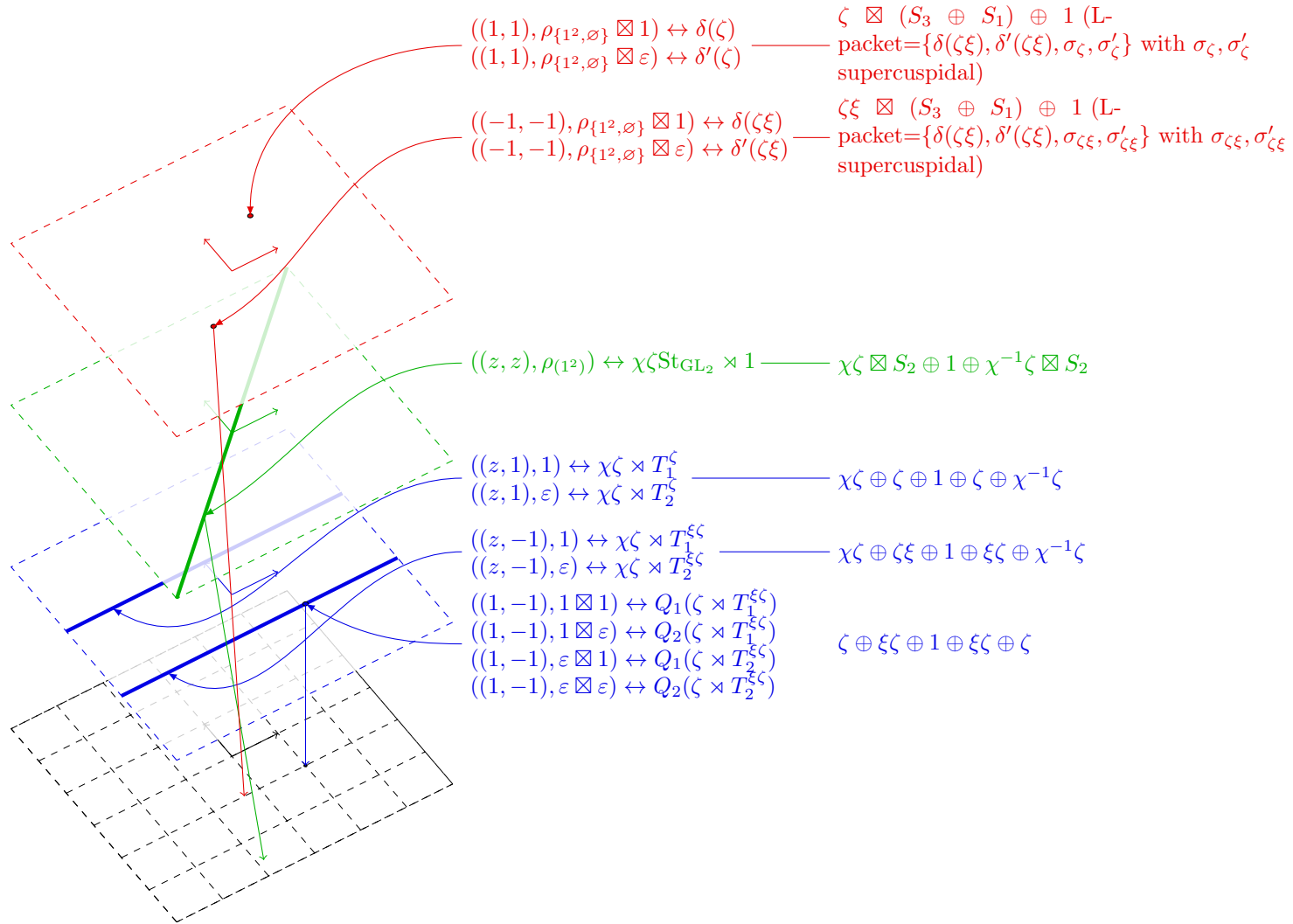
\end{landscape}

\begin{example}\label{exampleABPS}
We give here an example of Theorem~\ref{thm:main} in the case where $G=\Sp_4(F)$, $T=(F^{\times})^{2}$ is a maximal torus, $\zeta : F^{\times} \rightarrow \C^{\times}$ a ramified character and $\mathfrak{s}=[T,\zeta\boxtimes \zeta]$. The inertial pair $\mathfrak{s}$ corresponds to the inertial $L$-triple $\wj=[\widehat{T},\zeta \oplus \zeta, 1 ]_{\widehat{G}}$ (we use the same letter $\zeta$ for the character $\zeta$ and its Langlands parameter). We are looking at $$\Irr(G)_{\mathfrak{s}}= \{ \mbox{irreducible subquotients of } i_{B}^{G}(\chi_1 \zeta \boxtimes \chi_2 \zeta), \chi_1 \boxtimes \chi_2 \in  \mathcal{X}(T)\}.$$ The torus associated to $\mathfrak{s}$ is $T_{\mathfrak{s}}=\{\chi_1 \zeta \boxtimes \chi_2 \zeta, \chi_1 \boxtimes \chi_2 \in  \mathcal{X}(T)\}$. We have an isomorphism $T_{\mathfrak{s}} \simeq (\C^{\times})^2$ given by sending the character $\chi_1 \zeta \boxtimes \chi_2 \zeta \in T_{\mathfrak{s}}$ to the point $(z_1,z_2) \in (\C^{\times})^2$ where $(z_1,z_2)=((\chi_1 \zeta)(\Fr),(\chi_2\zeta)(\Fr))$. In this case $W_{\mathfrak{s}} \simeq N_G(T)/T \simeq \mathfrak{S}_2 \rtimes (\Z/2\Z)^2 \simeq \langle s_1 \rangle \rtimes (\langle s_2 \rangle \times \langle s_1 s_2 s_1 \rangle )=\langle s_1,s_2 \rangle $ and the action of $W_{\mathfrak{s}}$ on $T_{\mathfrak{s}}$ is described in the following table: 
\begin{table}[H]
\begin{center}
\renewcommand\arraystretch{1.2}
$$\begin{array}{|c|c|}
\hline
w & (z_1,z_2) \\ \hline
1 & (z_1,z_2)  \\ \hline
s_1 & (z_2,z_1) \\ \hline
s_2 & (z_1,z_2^{-1}) \\ \hline
s_1 s_2 & (z_2^{-1},z_1) \\ \hline
s_2 s_1 & (z_2,z_1^{-1}) \\ \hline
s_1 s_2 s_1 & (z_1^{-1},z_2) \\ \hline
s_2 s_1 s_2 & (z_2^{-1},z_1^{-1}) \\ \hline
s_1 s_2 s_1 s_2 & (z_1^{-1},z_2^{-1}) \\ \hline
\end{array} $$
\end{center}
\caption{Action of $W_{\mathfrak{s}}$ on $T_{\mathfrak{s}}$}
\label{table:actionWonT}
\end{table}
A set of representatives of the conjugacy classes of $W_{\mathfrak{s}}$ is $\{1,s_1,s_2,s_1s_2,s_1s_2s_1s_2\}$ and we have:
\begin{small}
\begin{align*}
T_\mathfrak{s}^{s_1} &= \{ (z,z) ,  z \in \C^\times \} & T_\mathfrak{s}^{s_1}/Z_\mathfrak{s}^{s_1}  &= \left\{ \left[ (z,z),(z^{-1},z^{-1}) \right] , z \in \C^\times \right\} \\
T_\mathfrak{s}^{s_2} &= \{ (z,1),(z,-1) ,  z \in \C^\times \} & T_\mathfrak{s}^{s_2}/Z_\mathfrak{s}^{s_2}  &= \left\{ \left[ (z,1),(z^{-1},1) \right] , z \in \C^\times \right\} \\
& & & \sqcup \left\{ \left[ (z,-1),(z^{-1},-1) \right] , z \in \C^\times \right\} \\
T_\mathfrak{s}^{s_1s_2} &= \{ (1,1), (-1,-1) \}  & T_\mathfrak{s}^{s_1s_2}/Z_\mathfrak{s}^{s_1 s_2}  &= \left\{ (1,1), (-1,-1) \right\}\\ 
T_\mathfrak{s}^{s_1 s_2 s_1 s_2} &= \{ (1,1), (1,-1), (-1,1), (-1,-1)  \} & T_\mathfrak{s}^{s_1 s_2 s_1 s_2}/Z_\mathfrak{s}^{s_1 s_2 s_1 s_2}  &= \left\{ (1,1), \left[ (1,-1), (-1,1) \right], (-1,-1)  \right\} 
\end{align*}
\end{small}
Hence we have a set of representatives of $Y_{\mathfrak{s}}$ (see \textsection \ref{sec:ABPSconj}) to describe the spectral extended quotient :
$$W_{\mathfrak{s},(z,z)}=\langle s_1 \rangle \simeq \mathfrak{S}_2, \quad W_{\mathfrak{s},(z,\pm 1)}=\langle s_2 \rangle \simeq \Z/2\Z, \quad W_{\mathfrak{s},(1,-1)}=\langle s_2 \rangle \times \langle s_1 s_2 s_1 \rangle  \simeq (\Z/2\Z)^2$$ $$ W_{\mathfrak{s},(1,1)}=W_{\mathfrak{s},(-1,-1)}=W_{\mathfrak{s}}=\mathfrak{S}_2 \rtimes (\Z/2\Z)^2$$
 
\begin{landscape}
\begin{table}[H]
\renewcommand\arraystretch{1.2}
$$\begin{array}{|c|c|c|c|c|c|c|c|}
\hline
\phi & H_{\phi}^{G} &(H_{\phi}^{G})^{\circ} & u_{\phi} & A_{{(H_{\phi}^{G})}^{\circ}}(u_{\phi})  & \Irr(A_{{(H_{\phi}^{G})}^{\circ}}(u_{\phi})) & H_{\varphi}^{L} &  \widehat{L} \\ \hline\hline
\multirow{2}*{$\zeta\omega \boxtimes (S_3 \oplus S_1) \oplus 1$} & \multirow{2}*{$S(\O_4 \times \O_1)$} & \multirow{2}*{$\SO_4$}  & \multirow{2}*{$(3,1)$} & \Z/2\Z  & \zeta & S(\O_4 \times \O_1)  &  \SO_5 \\ \cline{6-8} 

 &  &  & & \simeq \langle z_1z_3 \rangle   & 1 & \multirow{4}*{$\GL_1^2$} & \multirow{4}*{$\GL_1^2$} \\ \cline{1-6}

\chi \zeta \boxtimes S_2 \oplus 1 \oplus \chi^{-1} \zeta \boxtimes S_2  & \GL_2 & \GL_2  & (2) & \{1\} & 1 &  &  \\ \cline{1-6}

\chi \zeta \oplus \zeta \omega \oplus 1 \oplus \zeta \omega \oplus \chi^{-1} \zeta & \GL_1 \times S(\O_2 \times \O_1) &\GL_1 \times \SO_2 &  (1) \times (1) & \{1\} & 1 & & \\ \cline{1-6}

\zeta \oplus \zeta \xi \oplus 1 \oplus \zeta \xi \oplus \zeta & S(\O_2 \times \O_2 \times \O_1) & \SO_2 \times \SO_2 & (1) \times (1) & \{1\} & 1 & &  \\ \hline

\end{array} $$
\caption{Determining cuspidal supports}
\label{table:Langlandsparameter}
\end{table}
\begin{table}[H]
\begin{center}
\renewcommand\arraystretch{1.2}
$$\begin{array}{|c|c|c|c|c|c|}
\hline
\phi & A_{H_{\phi}^{G}}(u_{\phi}) & \Irr(A_{H_{\phi}^{G}}(u_{\phi})) & \Pi_{\phi}(G) & \Irr(\mathcal{W}_{\wj,\phi}) & \mathcal{W}_{\wj,\phi}=\mathcal{W}_{\wj,\phi}^{\circ} \rtimes \mathcal{R}_{\wj,\phi} \\ \hline \hline
\multirow{4}*{$\zeta\omega \boxtimes (S_3 \oplus S_1) \oplus 1$} & & \zeta \boxtimes 1 & \sigma_{\zeta \omega} & 1 & \{1\} \rtimes (\Z /2 \Z )   \\ \cline{3-5} 
 & (\Z/2\Z)^2 & \zeta \boxtimes \zeta & \sigma_{\zeta \omega}' & \zeta &\simeq \{1\} \rtimes \langle s_2 \rangle   \\ \cline{3-6} 
 
 & \simeq \langle z_1 z_3 \rangle \times \langle z_3 z_1' \rangle& 1 \boxtimes 1 & \delta(\zeta \omega) & \rho_{ \{1^2, \varnothing \} } \boxtimes 1 & (\mathfrak{S}_2 \rtimes \Z/2\Z) \rtimes (\Z /2 \Z )   \\  \cline{3-5} 
  & & 1 \boxtimes \zeta & \delta'(\zeta \omega) & \rho_{ \{1^2, \varnothing \} } \boxtimes \zeta &\simeq (\langle s_1 \rangle \rtimes \langle s_2s_1s_2 \rangle) \rtimes \langle s_2 \rangle \\ \hline
 
 \chi \zeta \boxtimes S_2 \oplus 1 \oplus \chi^{-1} \zeta \boxtimes S_2  & \{1\} & 1 & \chi \zeta \St_{\GL_2} \rtimes 1 & \rho_{(1^2)} & \mathfrak{S}_2 \rtimes \{1 \} \simeq \langle s_1 \rangle  \\ \hline
 
\multirow{2}*{$\chi \zeta \oplus \zeta \omega \oplus 1 \oplus \zeta \omega \oplus \chi^{-1} \zeta$} & \Z/2\Z & 1 &\chi \zeta \rtimes T_1^{\zeta \omega} & 1 & \{1\} \rtimes (\Z/2\Z) \\  \cline{3-5} 
& \simeq \langle z_1z_1' \rangle & \zeta &\chi \zeta \rtimes T_2^{\zeta \omega} & \zeta & \simeq \langle s_2 \rangle \\ 
\hline 
 
\multirow{4}*{$\zeta \oplus \zeta \xi \oplus 1 \oplus \zeta \xi \oplus  \zeta$} &  & 1 \boxtimes 1 & Q_1( \zeta \rtimes T_1^{\zeta \xi}) &  1 \boxtimes 1 & \\  \cline{3-5} 
&  (\Z/2\Z)^2  & 1 \boxtimes \zeta & Q_2( \zeta \rtimes T_1^{\zeta \xi}) & 1 \boxtimes \zeta & \{1\} \rtimes (\Z/2\Z)^2  \\ \cline{3-5} 
& \simeq \langle z_1z_1'' \rangle  \times \langle z_1'z_1'' \rangle & \zeta \boxtimes 1 & Q_1( \zeta \rtimes T_2^{\zeta \xi}) & \zeta \boxtimes 1 & \simeq \langle s_2 \rangle  \times \langle s_1s_2s_1s_2 \rangle \\ \cline{3-5} 
&  & \zeta \boxtimes \zeta & Q_2( \zeta \rtimes T_2^{\zeta \xi}) & \zeta \boxtimes \zeta &  \\ 
\hline

\end{array} $$
\end{center}
\caption{Parametrize the fibers}
\label{table:Langlandsparameter2}
\end{table}
\end{landscape}

As in the example \ref{ordspringer} the irreducible representations of $W_{\mathfrak{s}}$, which is a Weyl group of type $B_2$, are parametrized by bipartition of $2$ : $(2,\varnothing),(1^2,\varnothing),(1,1),(\varnothing,1^2),(\varnothing,2)$.\\

In the previous tables we determine, by the method that we have described above, the $L$-inertial packet $\Pi_{\wi}^{+}(G)$. The table \ref{table:Langlandsparameter} describes all Langlands parameters involved in this $L$-inertial packet, the groups $H_{\phi}^{G}$, the unipotents $u_{\phi}$, the groups of components $A_{(H_{\phi}^{G})^{\circ}}(u_{\phi}) $ and the cuspidal support. The table \ref{table:Langlandsparameter2} describes the $L$-packet of each Langlands parameters, the stabilizer group $\mathcal{W}_{\wj,\phi}$ and the irreducible representation attached to each representation that we have obtained by the Springer correspondence. Note here that we have twisted the Springer correspondence by the sign character so that for instance the Steinberg representation of $\GL_2$ is parametrized by the sign character.\\ We have denoted by $\omega$ the trivial character or an unramified character of order two $\xi$.\\ For the Langlands parameter $\phi=\zeta\omega \boxtimes (S_3 \oplus S_1) \oplus 1$, we find that $\mathcal{W}_{\wj,\phi}^{\circ}$ is a Weyl group of type $D_2$. The irreducible representation of the Weyl group of type $D_{n}$ are described by the sets $\{\alpha,\beta\}$ with $\alpha,\beta$ two partitions (perhaps trivial) such that $|\alpha| + |\beta|=n$. We know that we can embed the Weyl group of type $D_n$ in the Weyl group of type $B_n$ and for example, $\rho_{ \{1^2, \varnothing \} } \boxtimes 1$ (resp. $\rho_{ \{1^2, \varnothing \} } \boxtimes \zeta $)  correspond to the representation $\rho_{ (1^2, \varnothing) } $ (resp. $\rho_{ (\varnothing,1^2) } $).\\

Following an idea from Plymen, in Figure~\ref{pretty picture} we picture the extended quotient $T_{\mathfrak{s}} \sslash \widehat{W_{\mathfrak{s}}}$ with the decomposition with respect of the unipotent classes. 
In particular, \color{black!10!red} the plane in red is associated to the unipotent with partition $(3,1)$, \color{black!30!green} the plane in green is associated to the unipotent with partition $(2,2)$ and \color{black!10!blue} the plane in blue is associated with the partition $(1^4)$. \color{black} In particular, the last plane in black is where the usual quotient $T_{\mathfrak{s}}/W_{\mathfrak{s}}$ lives. We describe each point of the extended quotient, the corresponding representation (in the notation of \cite{Sally:1993}) and its Langlands parameter.

The $L$-inertial pair $\wi \in \mathcal{B}^{\st}(G)$ image of $\wj$ by $\mathcal{B}_e^{\st}(G) \rightarrow \mathcal{B}^{\st}(G)$ is $\wi=[\widehat{T},\zeta \oplus \zeta]$. Provided that $p \neq 2$ (otherwise there is more unramified characters of order $2$), we have: $$\Pi_{\wi}^{+}(G)=\Irr(G)_{[T,\zeta \boxtimes \zeta]} \sqcup \Irr(G)_{[G, \sigma_{\zeta}]} \sqcup \Irr(G)_{[G, \sigma_{\zeta\xi}]} \sqcup \Irr(G)_{[G, \sigma_{\zeta}']} \sqcup \Irr(G)_{[G, \sigma_{\zeta\xi}']}.$$

\end{example}

\printbibliography

\end{document}